\newtheorem{theorem}{Theorem}
\newtheorem{corollary}[theorem]{Corollary}
\newtheorem{lemma}[theorem]{Lemma}
\newtheorem{proposition}[theorem]{Proposition}
\newtheorem{question}[theorem]{Question}
\title{Infinite families of crossing-critical graphs\\ with prescribed average degree and crossing number}
     \author{\textsc{Drago Bokal}\footnotemark[2] \\[0.25em]
     {\small{Department of Mathematics}} \\[-0.25em]
     {\small{Institute of Mathematics, Physics, and Mechanics}} \\[-0.25em]
     {\small{Ljubljana, Slovenia}} \\[-0.1em]
     {\small\texttt{drago.bokal@imfm.uni-lj.si}}
     }
\date{\LaTeX-ed: \today}
\newcommand{\DEF}[1]{{\em #1\/}}
\newcommand{\dfnc}[3]{#1:#2\rightarrow #3}
\newcommand{\dset}[2]{\left\{#1 \:|\: #2\right\}}
\newcommand{\lset}[2]{\left\{#1, \ldots, #2\right\}}
\newcommand{\id}{\operatorname{id}}
\newcommand{\crn}{\operatorname{cr}}
\newcommand\tcrn{{\operatorname{tcr}}}
\newcommand{\eopf}{\raisebox{0.8ex}{\framebox{}}}
\newcommand{\NN}{\mathbb N}
\newcommand{\RR}{\mathbb R}
\newcommand{\cP}{{\cal P}}
\newcommand{\cQ}{{\cal Q}}
\newcommand{\cR}{{\cal R}}
\newcommand{\cT}{{\cal T}}
\newcommand{\cS}{{\cal S}}
\newcommand{\cH}{{\cal H}}
\newcommand{\cF}{{\cal F}}
\newcommand{\lw}{{\lambda}}
\newcommand{\rw}{{\rho}}
\newcommand{\GsG}{G_1\,\odot_{\sigma}G_2}
\newcommand{\GvvG}{G_1\,{}_{v_1}\!\!\odot_{v_2}G_2}
\newcommand{\vzip}[4]{#1\,{}_{#2}\!\!\odot_{#3}#4}
\newcommand{\crt}[2]{{#1}\;\square\;#2}
\newenvironment{proof}%
{\noindent{\bf Proof.}\ }%
{\hfill\eopf\par\bigskip}%
\begin{document}

\maketitle

\begin{abstract}
Šir\'{a}\v{n} constructed infinite families of $k$-crossing-critical graphs for every $k\ge 3$ and Kochol constructed such families of simple graphs for every $k\ge 2$. Richter and Thomassen argued that, for any given $k\ge 1$ and $r\ge 6$, there are only finitely many simple $k$-crossing-critical graphs with minimum degree $r$. Salazar observed that the same argument implies such a conclusion for simple $k$-crossing-critical graphs of prescribed average degree $r > 6$. He established existence of infinite families of simple $k$-crossing-critical graphs with any prescribed rational average degree $r\in [4,6)$ for infinitely many $k$ and asked about their existence for $r\in (3,4)$. The question was partially settled by Pinontoan and Richter, who answered it positively for $r\in(3\tfrac{1}{2},4)$. 

The present contribution uses two new constructions of crossing critical simple graphs along with the one developed by Pinontoan and Richter to unify these results and to answer Salazar's question by the following statement: for every rational number $r\in (3,6)$ there exists an integer $N_r$, such that, for any $k>N_r$, there exists an infinite family of simple $3$-connected crossing-critical graphs with average degree $r$ and crossing number $k$. Moreover, a universal lower bound on $k$ applies for rational numbers in any closed interval $I\subset (3,6)$. 

\bigskip\noindent
\textit{Keywords:~}crossing number, critical graph, crossing-critical graph, average degree, graph.
\end{abstract}


\section{Introduction}
\label{sc:introduction}

Let $\crn(G)$ denote the crossing number of a graph $G$. A graph $G$ is \DEF{$k$-crossing-critical}, if $\crn(G)\ge k$ and $\crn(G-e)<k$ for any edge $e\in E(G)$. Note that unless stated otherwise, all graphs in this paper are without vertices of degree two, as such vertices are trivial with respect to crossing number. The graphs may contain multiple edges, but do not contain loops. Besides that, the standard terminology from \cite{D} is used. 

\newpage 
Crossing-critical graphs give insight into structural properties of the crossing number invariant and have thus generated considerable interest. Šir\'{a}\v{n} introduced \DEF{crossing-critical edges} and proved that any such edge $e$ of a graph $G$ with $\crn(G-e)\le 1$ belongs to a Kuratowsky subdivision in $G$ \cite{S96}. Moreover, such a claim does not hold for edges with $\crn(G-e)\ge 5$. In \cite{S100}, Šir\'{a}\v{n} constructed the first infinite family of $3$-connected $k$-crossing-critical graphs for arbitrary given $k\ge 3$. Kochol constructed the first infinite family of simple $3$-connected $k$-crossing-critical graphs ($k\ge 2$) in \cite{K119}. Richter and Thomassen proved that $\crn(G)\le \frac{5}{2}k+16$ for a $k$-crossing-critical graph $G$ in \cite{RT198}. They used this result to prove that there are only finitely many simple $k$-crossing-critical graphs with minimum degree $r$ for any integers $k\ge 1$ and $r\ge 6$ and  constructed an infinite family of simple $4$-regular $4$-con\-nect\-ed $3$-crossing-critical graphs and posed a question about existence of simple $5$-regular $k$-crossing-critical graphs. Salazar observed that their argument implies finiteness of the number of the number of simple $k$-crossing critical graphs of average degree $r$ for any rational $r>6$ and integer $k>0$ \cite{S395}. Since the finiteness of the set of simple $3$-regular $k$-crossing-critical graphs can be established using Robertson-Seymour graph minor theory, it follows that the only average degrees for which an infinite family of simple $k$-crossing-critical graphs could exist are $r\in(3,6]$. Salazar constructed an infinite family of simple $k$-crossing-critical graphs with average degree $r$ for any $r\in [4,6)$ and posed the following question:
\begin{question}[\cite{S395}]
\label{qu:salazar}
Let $r$ be a rational number in $(3,4)$. Does there exist an integer $k$ and an infinite family of (simple) graphs, each of which has average degree $r$ and is $k$-crossing-critical?
\end{question}

Question \ref{qu:salazar} was partially answered by Pinontoan and Richter \cite{PR406}. They proposed constructing crossing-critical graphs from smaller pieces or tiles, and applied this idea to design infinite families of simple $k$-crossing-critical graphs for any prescribed average degree $r\in (3\frac{1}{2},4)$. 

Besides the study of degrees in crossing-critical graphs, there are also some structural results. Salazar improved the factor $\frac{5}{2}$ in the bound of Richter and Thomassen to $2$ for large $k$-crossing-critical graphs \cite{S} and for graphs of minimum degree four \cite{S297}. Hlin\v{e}n\'{y} proved that there is a function $f$ such that no $k$-cros\-sing-cri\-tical graph contains a subdivision of a binary tree of height $f(k)$, which implies that the path-width of such a graph is at most $2^{f(k)+1}-2$. In particular, $k-1\le f(k)\le 6(72\log_2 k+248)k^3$ \cite{H351,H410}. Existence of a bound on the path-width of $k$-crossing-critical graphs was first conjectured by Geelen, Richter, Salazar, and Thomas in \cite{GRS438}, where they established a result implying a bound on the tree-width of $k$-crossing-critical graphs. Hlin\v{e}n\'{y} defined crossed $k$-fences, which are $k$-crossing-critical graphs, in \cite{H351}. Crossed $k$-fences from some particular family contain subdivisions of binary trees of height $k-1$ and thus have path-width at least $2^{k}-2$. 

Focus of the research on crossing-critical graphs was on $3$-(edge)-con\-nect\-ed cros\-sing-critical graphs. This condition eliminates vertices of degree two, which are trivial with respect to the crossing number. But the condition is much stronger and its application has been justified only recently by a structural result of Lea\~{n}os and Salazar in \cite{LS}, stating that, for a connected crossing-critical graph $G$ with minimum degree at least three, there exists a collection $G_1,\ldots,G_m$ of\/ $3$-edge-connected crossing-critical graphs, each of which is contained as a subdivision in $G$, and such that $\crn(G)=\sum_{i=1}^m\crn(G_i)$.

Two new constructions of crossing-critical graphs are developed in this contribution. In combination with the one of Pinontoan and Richter \cite{PR406}, they are applied to answer the question of Salazar by a result resembling those of Šir\'{a}\v{n} and Kochol: we show that there exist infinite families of simple $k$-crossing-critical graphs with any prescribed average degree $r\in(3,6)$, for any $k$ greater than some lower bound $N_r$. This leaves average degree $r=6$ as the only open case. Several steps are required for our proof. In Section \ref{sc:tiles}, the theory of tiles of Pinontoan and Richter is extended to yield effective lower bounds on the number of tiles needed to imply the lower bounds on crossing number. Section \ref{sc:staircase} contains the first new construction of $k$-crossing critical graphs, which yields infinite families of such graphs with average degree arbitrarily close to three. The second new construction relies on a sufficient condition that the zip product, studied in \cite{CNCPP,CNCPT}, preserves criticality of the graphs involved. This is established in Section \ref{sc:zip}. The main result is proved in Section \ref{sc:families} by combining the results of the previous sections. Some further aspects of applying zip product in construction of crossing-number critical graphs are discussed as the conclusion in Section \ref{sc:aspects}.

\section{Tiles}
\label{sc:tiles}

In this section, we present a variant of the theory of tiles developed by Pinontoan and Richter \cite{PR406}. In particular, we consider general sequences of not necessarily equal tiles, avoid the condition that the tiles be connected, and allow forming double edges when joining tiles. Such generalizations do not hinder the arguments of \cite{PR406} and are useful in further investigations of tiled graphs. We establish an effective bound on the number of tiles needed to imply lower bounds on crossing numbers. Finally, we combine these improvements into a general construction of crossing-critical graphs. 

Let $G$ be a graph and $\lw=(\lw_0,\ldots,\lw_l)$, $\rho=(\rw_0,\ldots,\rw_r)$ two sequences of distinct vertices, such that no vertex of $G$ appears in both. The triple $T=(G,\lw,\rw)$ is called a \DEF{tile}. To simplify the notation, we may sometimes use $T$ in place of its graph $G$ and we may consider sequences $\lw$ and $\rw$ as sets of vertices. For $u,v\in\lw$ or $u,v\in\rw$, we use $u\le v$ or $u\ge v$ whenever $u$ precedes or succeeds $v$ in the respective sequence. 

A drawing of $G$ in the unit square $[0,1]\times[0,1]$ that meets the boundary of the square precisely in the vertices of the \DEF{left wall} $\lw$, all drawn in $\{0\}\times[0,1]$, and the \DEF{right wall} $\rw$, all drawn in $\{1\}\times[0,1]$, is a \DEF{tile drawing} of $T$ if the sequence of decreasing $y$-coordinates of the vertices of each $\lw$ and $\rw$ respects the corresponding sequence $\lw$ or $\rw$. The \DEF{tile crossing number} $\tcrn(T)$ of a tile $T$ is the minimum number of crossings over all tile drawings of $T$.

Let $T=(G,\lw,\rw)$ and $T'=(G',\lw',\rw')$ be two tiles. We say that $T$ is \DEF{compatible} with $T'$ if $|\rw|=|\lw'|$. A tile $T$ is \DEF{cyclically-compatible} if it is compatible with itself. A sequence of tiles $\cT=(T_0,\ldots,T_m)$ is \DEF{compatible} if $T_i$ is compatible with $T_{i+1}$ for $i=0,\ldots,m-1$. It is \DEF{cyclically-compatible} if it is compatible and $T_{m}$ is compatible with $T_0$. All sequences of tiles are assumed to be compatible.
\vfill
\vfill

The \DEF{join} of two compatible tiles $T$ and $T'$ is defined as $T\otimes T'=(G\otimes G',\lw,\rw')$, where $G\otimes G'$ is the graph obtained from the disjoint union of $G$ and $G'$ by identifying $\rw_i$ with $\lw'_i$ for $i=0,\ldots,|\rw|-1$. This operation is associative, thus we can define the \DEF{join} of a compatible sequence of tiles $\cT=(T_0,\ldots,T_{m})$ to be the tile $\otimes \cT=T_0\otimes T_1\otimes \ldots\otimes T_{m}$. Note that we may produce multiple edges or vertices of degree two when joining tiles. We keep the double edges, but remove the vertices of degree two by contracting one of the incident edges.
\vfill
\vfill

For a cyclically-compatible tile $T=(G,\lw,\rw)$, we define its \DEF{cyclization} $\circ T$ as the graph, obtained from $G$ by identifying $\lw_i$ with $\rw_i$ for $i=0,\ldots,|\rw|-1$. Similarly, we define the cyclization of a cyclically-compatible sequence of tiles as $\circ\cT=\circ(\otimes \cT)$.
\vfill
\vfill

\begin{lemma}[\cite{PR406}]
\label{lm:join}
Let $T$ be a cyclically-compatible tile. Then, $\crn(\circ{T})\le\tcrn(T)$. Let $\cT=(T_0,\ldots,T_{m})$ be a compatible sequence of tiles. Then, $\tcrn(\otimes \cT)\le \sum_{i=0}^{m} \tcrn(T_i)$.
\end{lemma}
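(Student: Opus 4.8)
The plan is to prove the two inequalities separately, since each amounts to converting a collection of tile drawings into a single drawing while controlling crossings, and both follow from the same basic mechanism: tile drawings are constrained to the unit square with walls on the boundary, so they can be concatenated horizontally or wrapped into an annulus without introducing new crossings beyond those already present.

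First I would handle the join bound $\tcrn(\otimes\cT)\le\sum_{i=0}^m\tcrn(T_i)$. For each tile $T_i$, fix an optimal tile drawing $D_i$ in the unit square $[0,1]\times[0,1]$ with $\tcrn(T_i)$ crossings. The key observation is that compatibility means $|\rw^{(i)}|=|\lw^{(i+1)}|$, and by definition of a tile drawing the right-wall vertices of $D_i$ sit on $\{1\}\times[0,1]$ in the prescribed top-to-bottom order, while the left-wall vertices of $D_{i+1}$ sit on $\{0\}\times[0,1]$ in the matching order. So I would rescale $D_i$ horizontally into the strip $[\tfrac{i}{m+1},\tfrac{i+1}{m+1}]\times[0,1]$ and, after an affine adjustment of $y$-coordinates so that the heights agree along each shared vertical line, glue $D_i$ to $D_{i+1}$ by identifying $\rw^{(i)}_j$ with $\lw^{(i+1)}_j$. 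No crossing is created at the gluing seam because the identified vertices lie on the common boundary line and the orderings match, so the edges meet only at the identified vertices. The result is a tile drawing of $\otimes\cT$ in one unit square whose crossings are exactly the disjoint union of the crossings of the $D_i$, giving a drawing with $\sum_i\tcrn(T_i)$ crossings; since $\tcrn$ is a minimum, the inequality follows. I would also note that contracting a degree-two vertex produced at a seam, or keeping a double edge, never increases the number of crossings in the drawing, so the clean-up described in the definition of join is harmless.

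For the cyclization bound $\crn(\circ T)\le\tcrn(T)$, I would start from an optimal tile drawing $D$ of the cyclically-compatible tile $T$ in the unit square and wrap the square into an annulus (or onto a cylinder) by bending the plane so that the line $\{0\}\times[0,1]$ is brought into coincidence with $\{1\}\times[0,1]$. Since $T$ is cyclically-compatible, $|\lw|=|\rw|$, and the cyclization identifies $\lw_i$ with $\rw_i$; because both walls were drawn in the prescribed order on their respective boundary lines, the wrap-around identification matches the vertices correctly and introduces no new crossings, yielding a drawing of $\circ T$ in the annulus, hence in the plane, with at most $\tcrn(T)$ crossings. As $\crn(\circ T)$ is the minimum over all drawings, the bound follows.

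The main obstacle, and the point requiring the most care, is verifying that the seam gluings genuinely introduce no crossings: one must check that after the affine rescaling the edge-ends approaching a shared boundary line do so transversally and meet only at the identified wall vertices, so that two edges from different tiles never cross near the seam. This is where the ordering condition in the definition of a tile drawing (decreasing $y$-coordinates respecting the wall sequence) is essential, since it guarantees the identifications on the two sides of a seam are order-preserving and thus planar-compatible. A secondary subtlety is the cyclization wrap, where one must ensure the annular drawing can be realized in the plane without the wrap forcing extra crossings; this is immediate because a drawing in the annulus is already a drawing in the plane. Everything else is routine bookkeeping about coordinates and the harmlessness of the degree-two contractions.
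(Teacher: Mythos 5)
Your proof is correct and is essentially the standard argument: the paper gives no proof of Lemma~\ref{lm:join} at all (it cites Pinontoan and Richter), and your strip-by-strip gluing of optimal tile drawings plus the annulus-wrapping for cyclization is precisely the construction that citation refers to. One small imprecision: matching the $y$-coordinates along a seam generally requires an order-preserving (e.g., piecewise-linear) homeomorphism of $[0,1]$ rather than a single affine map, but this does not affect the validity of the argument.
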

\vfill
\vfill

For a sequence $\omega$, let $\bar\omega$ denote the reversed sequence. For a tile $T=(G,\lw,\rw)$, let its \DEF{right-inverted} tile $T^{\updownarrow}$ be the tile $(G,\lw,\bar\rw)$, its \DEF{left-inverted} tile $^{\updownarrow}T$ be the tile $(G,\bar\lw,\rw)$, and its \DEF{inverted} tile be the tile  $^{\updownarrow}T^{\updownarrow}=(G,\bar\lw,\bar\rw)$. The \DEF{reversed} tile of $T$ is the tile $T^{\leftrightarrow}=(G,\rw,\lw)$. 
\vfill
\vfill

Let $\cT=(T_0,\ldots,T_m)$ be a sequence of tiles. A \DEF{reversed} sequence of $\cT$ is the sequence $\cT^{\leftrightarrow}=(T_m^{\leftrightarrow},\ldots,T_0^{\leftrightarrow})$. A \DEF{twist} of $\cT$ is the sequence $\cT^{\updownarrow}=(T_0,\ldots,T_{m-1},T_m^{\updownarrow})$. Let $i\in\lset{0}{m}$ be arbitrary. Then, an $i$-\DEF{flip} of $\cT$ is the sequence $\cT^i=(T_0,\ldots,T_{i-1},T_{i}^{\updownarrow},$ ${{}^{\updownarrow}T_{i+1}},T_{i+2},\ldots,T_m)$, an $i$-\DEF{cut} of $\cT$ is the sequence $\cT/i=(T_{i+1},\ldots,T_m,T_0,\ldots,T_{i-1})$, and an $i$-\DEF{shift} of $\cT$ is the sequence $\cT_i=(T_i,\ldots,T_m,T_0,\ldots,T_{i+1})$. For the last two operations, cyclic compatibility of $\cT$ is required.
\vfill
\vfill

Two sequences of tiles $\cT$ and $\cT'$ of the same length $m$ are \DEF{equivalent} if one can be obtained from the other by a sequence of shifts, flips, and reversals. It is easy to see that the graphs $\circ \cT$ and $\circ \cT'$ are equal for equivalent cyclically-compatible sequences $\cT$ and $\cT'$ and thus have the same crossing number. 

We say that a tile $T=(G,\lw,\rw)$ is \DEF{planar} if $\tcrn(T)=0$ holds. It is \DEF{connected} if $G$ is connected. It is \DEF{perfect} if:
\begin{enumerate}[label=(p.\roman{*}), ref=(p.\roman{*})]
\item \label{pt:selfCompatible} $|\lw|=|\rw|$, 
\item \label{pt:gmwConnected} both graphs $G-\lw$ and $G-\rw$ are connected, 
\item \label{pt:pathNotInWall} for every $v\in \lw$ or $v\in \rw$ there is a path from $v$ to a vertex in $\rw$ ($\lw$) in $G$ internally disjoint from $\lw$ ($\rw$), and
\item \label{pt:pairedPath} for every $0 \le i < j \le |\lw|$ there is a pair of disjoint paths $P_{ij}$ and $P_{ji}$ in $G$, such that $P_{ij}$ joins $\lw_i$ with $\rw_i$ and $P_{ji}$ joins $\lw_j$ with $\rw_j$. 
\end{enumerate}

\noindent Note that perfect tiles are connected.

\begin{lemma}[\cite{PR406}]
\label{lm:boundExists}
For a cyclically-compatible perfect planar tile $T$ and a compatible sequence $\cT=(T_0,\ldots,T_m,T)$, there exists $n\in \NN$, such that, for every $k\ge n$, $\tcrn((\otimes\cT)\otimes (T^k))=\tcrn((\otimes\cT)\otimes (T^n))$.
\end{lemma}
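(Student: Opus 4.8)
The plan is to treat $g(k):=\tcrn\bigl((\otimes\cT)\otimes T^{k}\bigr)$ as an integer sequence and to argue that it is non-increasing and bounded below, so that it must be eventually constant; the stabilization index $n$ is then simply the first place at which $g$ attains its minimum value, and $g(k)=g(n)$ for all $k\ge n$ is exactly the assertion. So the whole statement reduces to establishing monotonicity of $g$, together with the trivial lower bound $g(k)\ge 0$.

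The substantive point is monotonicity, and the observation I would exploit is that it runs in the \emph{decreasing} direction. I would write $(\otimes\cT)\otimes T^{k+1}=\bigl((\otimes\cT)\otimes T^{k}\bigr)\otimes T$ and apply the second half of Lemma~\ref{lm:join} to the compatible two-term sequence $\bigl((\otimes\cT)\otimes T^{k},\,T\bigr)$, obtaining
\[
g(k+1)=\tcrn\Bigl(\bigl((\otimes\cT)\otimes T^{k}\bigr)\otimes T\Bigr)\le \tcrn\bigl((\otimes\cT)\otimes T^{k}\bigr)+\tcrn(T)=g(k)+\tcrn(T).
\]
Since $T$ is planar we have $\tcrn(T)=0$, whence $g(k+1)\le g(k)$. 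Here the cyclic compatibility of $T$ is precisely what guarantees that each of these joins is well defined, so the inequality is legitimate for every $k$.

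To finish, I would note that $\bigl(g(k)\bigr)_{k\ge 0}$ is a non-increasing sequence of non-negative integers; such a sequence attains its infimum and is constant from some index $n$ onward, which gives the claim.

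I expect the only conceptual pitfall to be getting the direction of monotonicity right. It is tempting to try instead to show that appending copies of the structurally rich tile $T$ can only \emph{increase} the tile crossing number, and to lean on the perfection hypotheses for this; but that inequality is delicate, because the interior walls created by the joins carry no ordering constraint and hence afford extra drawing freedom not available to the right wall of a stand-alone tile. The argument above sidesteps this entirely: planarity of $T$ makes the join bound collapse to a non-increase, so the remaining ingredients of perfectness---connectedness and the path conditions \ref{pt:pathNotInWall}--\ref{pt:pairedPath}---are not actually needed for this particular conclusion, only $\tcrn(T)=0$ and self-compatibility being used.
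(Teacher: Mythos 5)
Your proof is correct. The paper itself offers no proof of this lemma --- it is imported from Pinontoan and Richter \cite{PR406} --- and your argument is the natural one and essentially the one in that source: the second part of Lemma \ref{lm:join} together with $\tcrn(T)=0$ shows that $k\mapsto\tcrn\bigl((\otimes\cT)\otimes T^{k}\bigr)$ is non-increasing, and a non-increasing sequence of non-negative integers is eventually constant. Your closing remark is also accurate: perfectness is irrelevant to this particular statement (only planarity and self-compatibility are used); in this paper the perfectness hypotheses earn their keep in Lemma \ref{lm:perfect} and Theorem \ref{th:generalTilesIe}, where tile drawings must be recovered from arbitrary drawings of the cyclization.
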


Let $T=(G,\lw,\rw)$ be a tile and $H$ a graph that contains $G$ as a subgraph. The \DEF{complement} of the tile $T$ in $H$ is the tile $H-T=(H[(V(H)\setminus V(G))\cup\lw\cup\rw]-E(G),\rw,\lw)$. We can consider it as the edge complement of the subgraph $G$ of $H$ from which we remove all the vertices of $T$ not in its walls. Whenever $\circ(T\otimes (H-T))=H$, i.e.\ if the vertices of $\lw\cup\rw$ separate $G$ from $H - G$, we say that $T$ is a \DEF{tile in $H$}. Using this concept, the following lemma shows the essence of perfect tiles.

\begin{lemma}
\label{lm:perfect}
Let $T=(G,\lw,\rw)$ be a perfect planar tile in a graph $H$, such that there exist two disjoint connected subgraphs $G_\lw$ and $G_\rw$ of $H$ contained in the same component of $H-T$ and with $G\cap G_\lw=(\lw,\emptyset)$, $G\cap G_\rw=(\rw,\emptyset)$. If $E(G)$ and either $E(G_\lw)$ or $E(G_\rw)$ are not crossed in some drawing $D$ of $H$, then the $D$-induced drawings of\/ $T$ and its complement $H-T$ are homeomorphic to tile drawings.
\end{lemma}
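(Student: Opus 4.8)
The plan is to construct a single simple closed curve $\gamma$ in the sphere carrying the drawing $D$ so that $\gamma$ meets $D$ precisely in the wall vertices, traverses them in the cyclic order $\lw_0,\ldots,\lw_l,\rw_r,\ldots,\rw_0$, and bounds a closed disc $\Delta_1$ containing the drawing of $G$ whose complementary disc $\Delta_2$ contains the drawing of the complement $H-T$. Once such a $\gamma$ is in hand we are done: a homeomorphism carrying $\Delta_1$ to the unit square and sending $\gamma$ to its boundary places $\lw$ on the left side and $\rw$ on the right side in decreasing order of $y$, which is exactly a tile drawing of $T$; applying the complementary homeomorphism to $\Delta_2$ simultaneously exhibits $H-T$ as a tile drawing.

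I would first reduce and set up. Replacing $T$ by its reverse $T^{\leftrightarrow}$ if necessary interchanges the two walls and the two graphs $G_\lw,G_\rw$, so I may assume it is $E(G_\lw)$ that is uncrossed in $D$. Then $E(G)\cup E(G_\lw)$ is uncrossed and the induced drawing $D[G\cup G_\lw]$ is plane; in particular $D[G]$ is a plane drawing of $G$, and $G_\lw$ is a connected plane subgraph meeting $G$ exactly in $\lw$. Because $T$ is a tile in $H$, the walls separate $V(G)$ from $V(H)\setminus V(G)$, so every vertex and edge of $H-T$ outside the walls is drawn strictly inside the faces of $D[G]$, attached to $G$ only along $\lw\cup\rw$; moreover the connected subgraph $K\subseteq H-T$ obtained from $G_\lw$, $G_\rw$ and a connecting path in their common component of $H-T$ contains all the walls.

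The core step is to show that all wall vertices lie on the boundary of one common face $F$ of $D[G]$, in the stated cyclic order, and that all of $H-G$ lies inside $F$. For the left wall I would use $G_\lw$: being connected, uncrossed and attached to $G$ only at $\lw$, the boundary of a thin regular neighbourhood of a spanning tree of $G_\lw$ is an arc meeting $D[G]$ exactly in $\lw$ and visiting these vertices in an order dictated by the plane embedding. That this order is the wall order $\lw_0,\ldots,\lw_l$, and that the right wall is forced onto the \emph{same} face in the reversed order $\rw_r,\ldots,\rw_0$, is where perfection enters decisively: the paired disjoint paths of \ref{pt:pairedPath} link $\lw_i$ to $\rw_i$ and $\lw_j$ to $\rw_j$ disjointly for each $i<j$, and in a plane drawing two such disjoint connections must nest consistently, pinning the cyclic order of the walls; conditions \ref{pt:gmwConnected} and \ref{pt:pathNotInWall} guarantee that no wall vertex is separated from $F$ and, together with the connectedness of $K$, that $\lw$ and $\rw$ bound a common face rather than two distinct ones and that $H-G$ cannot escape $F$. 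Having established this, I route $\gamma$ inside $F$ through $\lw_0,\ldots,\lw_l$ and then $\rw_r,\ldots,\rw_0$, meeting $D$ only at the walls, and take $\Delta_1,\Delta_2$ to be its two sides.

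The main obstacle is precisely this core step, and specifically the passage from ``$G_\lw$ connects the left wall'' to ``all walls lie on one common face in the correct linear order.'' A connected uncrossed subgraph attached to a plane graph only at $\lw$ need \emph{not}, by itself, force $\lw$ onto a single face: the wall vertices could a priori be distributed over several faces linked through shared wall vertices, and even on one face their cyclic order need not be the wall order. Excluding these possibilities is exactly what the perfection axioms \ref{pt:gmwConnected}--\ref{pt:pairedPath} together with planarity of $T$ are for, and carrying out this topological analysis rigorously — in particular deducing the correct order from the disjoint-path condition \ref{pt:pairedPath} and the single-face confinement from the connectedness of $K$ — is the delicate part of the proof.
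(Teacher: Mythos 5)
Your overall strategy is the same as the paper's: reduce everything to showing that the drawing of the complement lies in a single face $F$ of the induced plane drawing of $G$, that every wall vertex lies exactly once on the boundary walk of $F$, that the two walls do not interlace there, and that they are traversed in mutually reversed wall orders; the homeomorphism onto the unit square is then routine. The problem is that you explicitly leave this core step as an acknowledged obstacle rather than proving it, and the hints you offer for closing it are partly insufficient and partly misattributed, so the proposal has a genuine gap rather than a complete proof.

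Three concrete points. First, you never exclude the possibility that a wall vertex appears \emph{twice} on the facial walk of $F$ (a pinched face), which would derail the construction of your curve $\gamma$; the paper rules this out by observing that a repeated vertex would be a cutvertex of the planar graph $G$, contradicting \ref{pt:gmwConnected} or \ref{pt:pathNotInWall}. Second, and most seriously, you attribute the cyclic order of the walls to the disjoint paths of \ref{pt:pairedPath} ("two such disjoint connections must nest consistently"), but \ref{pt:pairedPath} alone cannot pin that order: in the interlaced boundary pattern $\lw_0,\rw_0,\lw_1,\rw_1,\ldots$ each pair $\{\lw_i,\rw_i\}$ occupies consecutive positions, so all the chords realizing \ref{pt:pairedPath} are pairwise non-crossing (side by side rather than nested), yet the conclusion of the lemma fails. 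Excluding interlacing is precisely where the hypothesis on $G_\lw$ and $G_\rw$ must be used, and this is the argument the paper makes: $G_\lw$ is connected and uncrossed, $G_\rw$ is connected and disjoint from it, both are drawn inside $F$ and attached to $\lw$, respectively $\rw$, so interlaced walls would force $G_\rw$ to cross $G_\lw$. Only after interlacing is excluded does \ref{pt:pairedPath} yield that $\rw$ appears in the reverse of the order of $\lw$. Third, that the order of $\lw$ along the walk is the actual wall order $\lw_0,\ldots,\lw_l$, and not an arbitrary permutation, needs its own argument --- the paper connects $\lw_i$ by a path $Q$ in $G$ to $P_{jl}$ avoiding $P_{lj}$ and invokes planarity and connectedness once more; your regular-neighbourhood arc around a spanning tree of $G_\lw$ produces \emph{some} order of $\lw$, but nothing in your text shows it is the right one. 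So the skeleton matches the paper, but the load-bearing arguments are absent, and the one mechanism you do name for the key ordering step would not do the job.
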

\begin{proof}
There is only one component of $H-T$ containing the vertices of $\lw\cup\rw$, and as the edges of other components do not cross $G$ nor influence its induced drawing, we may assume that $H-T$ {\em is} that component and, in particular, it is connected.

Denote by $D_T$ the $D$-induced drawing of $T$, by $T^-$ the tile $H-T$, and by $D^-$ the $D$-induced drawing of $T^-$. As the edges of $T$ are not crossed in $D$ and $T^-$ is connected, there is a face $F$ of $D_T$ containing $D^-$. The boundary of $F$ contains all vertices of $T\cap T^-=\lw\cup\rw$. Let $W$ be the facial walk of $F$. No vertex of $\lw\cup \rw$ appears twice in $W$: such a vertex would be a cutvertex in the planar graph $G$. Then either $G-\lw$ or $G-\rw$ would not be connected, violating \ref{pt:gmwConnected}, or some vertex in $\lw\cup\rw$ would have no path to the opposite wall, as required by \ref{pt:pathNotInWall}. 

Let $W'$ be the induced sequence of vertices of $\lw\cup \rw$ in $W$. As the edges of $G_\lw$ or $G_\rw$ are not crossed in $D$ and $T$, $G_\lw$, and $G_\rw$ are connected, the vertices of $\lw$ do not interlace with the vertices of $\rw$ in $W'$. The ordering of $\lw$ in $W'$ is the inverse ordering of $\rw$ in $W'$, since the disjoint paths from \ref{pt:pairedPath} do not cross in $D_T$. The planarity and the connectedness of $T$ imply that whenever $i<j<l$ or $i>j>l$, there is a path $Q$ from $P_{jl}$ to $\lw_i$ disjoint from $P_{lj}$. $Q$ does not cross $P_{lj}$ in $D_T$, thus $W'=\lw\bar\rw$ or $W'=\rw\bar\lw$. The claim follows.
\end{proof}

The above arguments were in \cite{PR406} combined with Lemma \ref{lm:boundExists} to demonstrate the following:

\begin{theorem}[\cite{PR406}]
Let $T$ be a perfect planar tile and let $\bar T_k=T^k\otimes T^{\updownarrow}\otimes T^k$ for $k\ge 1$. Then there exist integers $n,N$, such that $\crn(\circ(\bar T_k))=\tcrn(\bar T_n)$ for every $k\ge N$.
\end{theorem}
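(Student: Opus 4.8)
The plan is to squeeze $\crn(\circ\bar T_k)$ between two matching bounds once $k$ is large. Write $t_k=\tcrn(\bar T_k)$. Since $T$ is planar we have $\tcrn(T)=0$, and since $\bar T_{k+1}=T\otimes\bar T_k\otimes T$, the subadditive half of Lemma \ref{lm:join} gives $t_{k+1}\le\tcrn(T)+t_k+\tcrn(T)=t_k$. Thus $(t_k)$ is a non-increasing sequence of non-negative integers, hence eventually constant (this is exactly the stabilization guaranteed by Lemma \ref{lm:boundExists}); let $n$ be an index from which it is constant and put $t:=t_n=\tcrn(\bar T_n)$. The cyclization half of Lemma \ref{lm:join} then yields the upper bound $\crn(\circ\bar T_k)\le\tcrn(\bar T_k)=t$ for every $k\ge n$.

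For the lower bound I introduce the asymmetric tiles $U_{a,b}=T^a\otimes T^{\updownarrow}\otimes T^b$, so that $\bar T_k=U_{k,k}$. Exactly as above, prepending or appending a planar copy of $T$ never increases the tile crossing number, so $s(a,b):=\tcrn(U_{a,b})$ is non-increasing in each of $a$ and $b$. Writing $m=\min(a,b)$ and $M=\max(a,b)$, monotonicity gives $s(M,M)\le s(a,b)\le s(m,m)$; hence, as soon as $a,b\ge n$, both outer terms equal $t$ and therefore $s(a,b)=t$. This is the identity I will contradict: I will locate, inside any economical drawing of $\circ\bar T_k$, a tile drawing of some $U_{a,b}$ with $a,b\ge n$ and at most $c$ crossings.

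So fix $k$ and an optimal drawing $D$ of $\circ\bar T_k$ with $c=\crn(\circ\bar T_k)$ crossings; by the upper bound $c\le t$. The graph $\circ\bar T_k$ is glued cyclically from $2k+1$ tile copies, the single inverted copy $T^{\updownarrow}$ being the \emph{defect}, and every edge lies in a unique copy, so at most $2c$ copies carry a crossed edge and at least $2k+1-2c$ copies are clean. Among these, a clean copy having a clean neighbour and lying at distance at least $n$ from the defect on both sides survives as soon as $k$ exceeds a threshold $N$ depending only on $n$ and $t$, since the excluded copies (unclean, isolated clean, or within distance $n$ of the defect) number at most a bounded multiple of $t$ plus $2n$. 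I cut $D$ at such a clean copy: applying Lemma \ref{lm:perfect} to this clean perfect planar tile, with the clean neighbour supplying the required uncrossed $G_\lw$ or $G_\rw$, shows that the $D$-induced drawing of its complement $H-T$ is homeomorphic to a tile drawing. That complement is a path of $2k$ copies containing the single defect, i.e.\ a tile of the form $U_{a,b}$ with $a+b=2k-1$ and, by the placement of the cut, $a,b\ge n$; as the removed copy is clean it inherits all $c$ crossings of $D$. Hence $t=s(a,b)=\tcrn(U_{a,b})\le c$, and together with $c\le t$ this forces $\crn(\circ\bar T_k)=c=t=\tcrn(\bar T_n)$ for all $k\ge N$, as claimed.

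The step I expect to be delicate is this final cutting argument: verifying the hypotheses of Lemma \ref{lm:perfect} in the cyclic drawing—in particular isolating disjoint connected subgraphs $G_\lw,G_\rw$ in a single component of the complement whose edges are uncrossed—and the bookkeeping that simultaneously places the cut at a clean copy and keeps the defect at distance at least $n$ from both ends, so that the extracted tile is genuinely some $U_{a,b}$ with $a,b\ge n$. Everything else, namely the monotonicity, the two stabilizations, and the two appeals to Lemma \ref{lm:join}, is routine subadditivity.
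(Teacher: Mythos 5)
Your proof is correct and follows essentially the route the paper indicates for this result: the paper gives no proof of its own (it attributes the theorem to Pinontoan--Richter, obtained by combining Lemma \ref{lm:perfect} with the stabilization in Lemma \ref{lm:boundExists}), and your argument is exactly that combination --- monotone stabilization of $\tcrn(\bar T_k)$ via the subadditivity in Lemma \ref{lm:join}, plus the pigeonhole-and-cut argument through a clean perfect planar tile with a clean neighbour, which is also precisely how the paper proves its effective version, Theorem \ref{th:generalTilesIe}. The one ingredient you add, the two-parameter monotonicity of $\tcrn(T^a\otimes T^{\updownarrow}\otimes T^b)$ to handle the asymmetric position of the cut relative to the inverted copy, is sound and is needed because the paper's Theorem \ref{th:generalTilesIe} absorbs that issue into its hypotheses.
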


\noindent We establish effective values of $n$ and $N$ from the above theorem:

\begin{theorem}
\label{th:generalTilesIe}
Let $\cT=(T_0,\ldots,T_l,\ldots,T_m)$ be a cyclically-compatible sequence of tiles. Assume that, for some integer $k\ge 0$, the following hold: $m\ge 4k-2$, $\tcrn(\otimes \cT/i)\ge k$, and the tile $T_i$ is a perfect planar tile, both for every $i=0,\ldots,m$, $i\neq l$. Then, $\crn(\circ\cT)\ge k$. 
\end{theorem}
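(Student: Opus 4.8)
The plan is to argue by contradiction. Suppose $\crn(\circ\cT)\le k-1$ and fix an optimal drawing $D$ of $\circ\cT$ with at most $k-1$ crossings. Every edge of $\circ\cT$ belongs to exactly one tile $T_i$ (joining and cyclizing identify only wall vertices, never edges), so I call a tile \emph{clean} if none of its edges is crossed in $D$ and \emph{dirty} otherwise. Since each crossing involves two edges and hence at most two tiles, the at most $k-1$ crossings make at most $2(k-1)=2k-2$ of the $m+1$ tiles dirty.

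Next I would locate a good index by a counting argument calibrated to the bound. I seek an index $i$ with $i\neq l$ and $i+1\neq l$ such that both $T_i$ and $T_{i+1}$ are clean; call such $i$ \emph{good}. An index can fail to be good only in one of three ways: $i\in\{l-1,l\}$ (at most $2$ indices); $T_i$ is dirty (at most $2k-2$ indices); or $T_i$ is clean but $T_{i+1}$ is dirty, and here $i\mapsto i+1$ injects the bad indices into the set of dirty tiles, so again at most $2k-2$ indices. Thus at most $2+(2k-2)+(2k-2)=4k-2$ indices are bad, while there are $m+1\ge 4k-1$ indices in all, leaving at least one good index $i$.

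Having fixed a good $i$, I would set $H=\circ\cT$ and identify its complement tile $H-T_i$ with $\otimes\cT/i$, so that $\tcrn(H-T_i)=\tcrn(\otimes\cT/i)\ge k$. To invoke Lemma \ref{lm:perfect} on the perfect planar tile $T_i$, I take $G_\rw=T_{i+1}$, which is connected, clean, and meets $T_i$ exactly in $\rw_i=\lw_{i+1}$, and I take $G_\lw$ to be the remaining arc $T_{i+2}\otimes\cdots\otimes T_{i-1}$ with its first wall $\rw_{i+1}$ deleted; by property \ref{pt:gmwConnected} the deletion keeps this subgraph connected, it still contains all of $\lw_i=\rw_{i-1}$, and it is now disjoint from $G_\rw$. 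Since $E(T_i)$ and $E(G_\rw)$ are uncrossed in $D$, Lemma \ref{lm:perfect} yields that the $D$-induced drawing of $H-T_i$ is homeomorphic to a tile drawing of $\otimes\cT/i$. That tile drawing then has at least $\tcrn(\otimes\cT/i)\ge k$ crossings, all of which are crossings of $D$, contradicting that $D$ has at most $k-1$. Hence $\crn(\circ\cT)\ge k$.

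I expect the main obstacle to be verifying the structural hypotheses of Lemma \ref{lm:perfect} rather than the counting. The delicate point is that $G_\lw$ and $G_\rw$ must lie in the same component of $H-T_i$, which requires the arc $\otimes\cT/i$ to be connected: the perfect tiles form a connected chain, but the exceptional tile $T_l$ is not assumed connected, so I would need to confirm (or arrange in the construction) that $T_l$ links its two walls, ensuring that deleting $T_i$ does not separate the $\lw_i$-end from the $\rw_i$-end of the arc. A secondary concern is keeping the three families of bad indices bounded independently so that the threshold is genuinely $m\ge 4k-2$ and not larger; the injectivity of $i\mapsto i+1$ into the dirty tiles is exactly what holds the third term at $2k-2$. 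Once connectivity is secured, the rest is the topological bookkeeping already encapsulated in Lemma \ref{lm:perfect}.
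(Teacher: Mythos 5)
Your proposal is correct in outline and follows essentially the same route as the paper's proof: an identical pigeonhole count (the paper lumps $T_l$ together with the at most $2(k-1)$ crossed tiles into a set $\cS$ of size at most $2k-1$, which fragments the cyclic sequence into at most $2k-1$ segments holding at least $2k$ clean tiles, forcing two consecutive clean tiles distinct from $T_l$; your three-way bad-index count gives exactly the same threshold), followed by an application of Lemma \ref{lm:perfect} to a clean perfect tile, and the contradiction with $\tcrn(\otimes\cT/i)\ge k$. The only divergence is the choice of $(G_\lw,G_\rw)$: the paper takes two single tiles adjacent to the removed one and runs a two-case analysis, using $(H,T,G_\lw,G_\rw)=(G,T_1,T_0,T_2)$ when $T_0\neq T_l$ and $(G,T_2,T_1,T_3)$ otherwise, precisely so that both chosen subgraphs are perfect and hence connected, whereas you take $G_\rw=T_{i+1}$ and let $G_\lw$ be the entire remaining arc, which is what creates your connectivity worry about $T_l$. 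Two comments on that worry. Minor: your appeal to \ref{pt:gmwConnected} when deleting $\rw_{i+1}$ is unavailable in the case $l=i+2$, since $T_l$ need not be perfect; this is a small extra case your choice creates and the paper's avoids. Major: the concern you flag is not actually specific to your variant. Lemma \ref{lm:perfect} also requires $G_\lw$ and $G_\rw$ to lie in a common component of $H-T$, and for the paper's pairs (say $T_0$ and $T_2$ inside $G-T_1$) the only candidate connection likewise runs the long way around the cycle through $T_l$; so the paper's case analysis secures connectedness of the chosen subgraphs themselves, but leaves the same-component hypothesis resting on exactly the condition you isolate, namely that $T_l$ links its two walls, which the stated hypotheses do not formally guarantee. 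In the paper's uses of the theorem (Propositions \ref{pr:graphsH} and \ref{pr:graphsS}) every tile in the sequence is perfect, so the point is vacuous there; under that same implicit assumption your argument is complete, and having explicitly identified this weak spot is the only substantive way your write-up differs from the paper's.
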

\begin{proof}
We may assume $k\ge 1$. Let $G=\circ\cT$ and let $D$ be an optimal drawing of $G$. Assume that $D$ has less than $k$ crossings. Then there are at most $2k-1$ tiles in the set $\cS=\dset{T_i} {i=l\hbox{ or } E(T_i)\hbox{ cros\-s\-ed in }D}$. The circular sequence $\cT$ is by the tiles of $\cS$ fragmented into at most $2k-1$ segments. By the pigeon-hole principle, the set $\cT\setminus \cS$, which consists of at least $2k$ tiles, contains two consecutive tiles $T_iT_{i+1}$. Assume for simplicity that $i=1$, then either $T_{0}$ or $T_{3}$ is distinct from $T_l$. Lemma \ref{lm:perfect} with $(G,T_1,T_{0},T_{2})$ or $(G,T_{2},T_{1},T_{3})$ in place of $(H,T,G_\lw,G_\rw)$ establishes that the induced drawing $D^-$ of $G-T_j$ is a tile drawing for some $j\in\{1,2\}$. Since $D^-$ contains all the crossings of $D$, this contradicts $\tcrn(\otimes(\cT/j))\ge k$, and the claim follows.
\end{proof}

\begin{corollary}
\label{cr:generalTiles}
Let $\cT=(T_0,\ldots,T_l,\ldots,T_m)$ be a cyclically-compatible sequence of tiles and $k=\min_{i\neq l}\tcrn(\otimes \cT/i)$. If $m\ge 4k-2$ and the tile $T_i$ is a perfect planar tile for every $i=0,\ldots,m$, $i\neq l$, then $\crn(\circ\cT)=k$. 
\end{corollary}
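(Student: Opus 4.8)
The plan is to derive this directly from Theorem \ref{th:generalTilesIe} by combining the lower bound that the theorem supplies with a matching upper bound coming from the join lemma. The corollary packages the hypotheses of the theorem into the single quantity $k=\min_{i\neq l}\tcrn(\otimes \cT/i)$, so the first thing I would do is observe that this choice of $k$ makes the hypothesis $\tcrn(\otimes \cT/i)\ge k$ automatic for every $i\neq l$: each such term is at least the minimum. Together with the assumptions $m\ge 4k-2$ and that every $T_i$ with $i\neq l$ is a perfect planar tile, all premises of Theorem \ref{th:generalTilesIe} are met, and I would invoke it to conclude $\crn(\circ\cT)\ge k$.

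For the reverse inequality $\crn(\circ\cT)\le k$, the idea is to exhibit a drawing of $\circ\cT$ achieving $k$ crossings by using the tile structure. Choose an index $j\neq l$ realizing the minimum, so that $\tcrn(\otimes(\cT/j))=k$. Since $\cT$ is cyclically-compatible, the cut $\cT/j$ is a compatible sequence whose cyclization is again $\circ\cT$ (the cyclization is invariant under the shifts and cuts described before the statement, up to the equivalence of cyclic sequences). By Lemma \ref{lm:join}, $\crn(\circ(\cT/j))\le\tcrn(\otimes(\cT/j))=k$, and because $\circ(\cT/j)=\circ\cT$ as graphs, this gives $\crn(\circ\cT)\le k$. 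Combining the two bounds yields $\crn(\circ\cT)=k$.

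The step I expect to need the most care is the bookkeeping that identifies $\circ\cT$ with $\circ(\cT/j)$ as the \emph{same} graph, so that the lower bound from the theorem and the upper bound from Lemma \ref{lm:join} truly refer to one crossing number. This rests on the earlier remark that cyclizations of equivalent cyclically-compatible sequences are equal graphs; an $i$-cut is built from shifts, so $\cT/j$ is equivalent to $\cT$ and their cyclizations coincide. I would make this invariance explicit rather than leave it implicit, since the whole argument hinges on applying a \emph{tile} bound (via a cut that breaks the cycle at a convenient place) to control the crossing number of the \emph{cyclized} graph. With that identification in hand, the corollary is a one-line consequence of the preceding theorem and lemma.
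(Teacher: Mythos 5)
Your lower bound is fine and matches the paper: with $k=\min_{i\neq l}\tcrn(\otimes\cT/i)$ the hypotheses of Theorem \ref{th:generalTilesIe} hold verbatim, giving $\crn(\circ\cT)\ge k$. The gap is in your upper bound, at exactly the step you flagged as needing care: you treat the $j$-cut as if it merely reorders the cyclic sequence, but by the paper's definition $\cT/j=(T_{j+1},\ldots,T_m,T_0,\ldots,T_{j-1})$ \emph{deletes} the tile $T_j$ — it is a sequence of length one less, not a relabelling. Consequently $\circ(\cT/j)$ is a strictly smaller graph than $\circ\cT$ (the right wall of $T_{j-1}$ gets identified directly with the left wall of $T_{j+1}$), so the identity ``$\circ(\cT/j)=\circ\cT$ as graphs'' is false. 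Your supporting remark that a cut is ``built from shifts'' and hence gives an equivalent sequence also fails: equivalence in the paper is generated by shifts, flips, and reversals, all of which preserve the length of the sequence, and cuts are deliberately excluded.

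The repair is short and is what the paper does: rejoin the deleted tile before cyclizing. One has $\circ\cT=\circ\bigl((\otimes(\cT/j))\otimes T_j\bigr)$, so Lemma \ref{lm:join} gives
\begin{equation*}
\crn(\circ\cT)\;\le\;\tcrn\bigl((\otimes(\cT/j))\otimes T_j\bigr)\;\le\;\tcrn(\otimes(\cT/j))+\tcrn(T_j)\;=\;k,
\end{equation*}
where the last equality uses that $T_j$ is \emph{planar} (legitimate since $j\neq l$). Note that your version of the upper bound never invoked planarity of any tile — that is a symptom of the error, since joining back a non-planar tile could genuinely increase the tile crossing number, and the corollary's conclusion would be false without the planarity hypothesis entering the upper bound.
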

\begin{proof}
By Lemma \ref{lm:join} and the planarity of tiles, $\crn(\circ\cT) \le \tcrn((\otimes \cT/i)\otimes T_i)\le \tcrn(\otimes \cT/i)$ for any $i\neq l$, thus $\crn(\circ\cT)\le k$. Theorem \ref{th:generalTilesIe} establishes $k$ as a lower bound and the claim follows.
\end{proof}

A tile $T$ is $k$-\DEF{degenerate} if it is perfect, planar, and $\tcrn(T^{\updownarrow}-e)<k$ for any edge $e\in E(T)$. A sequence of tiles $\cT=(T_0,\ldots,T_m)$ is $k$-\DEF{critical} if the tile $T_i$ is $k$-degenerate for every $i=0,\ldots,m$ and $\min_{i\neq m}\tcrn(\otimes(\cT^{\updownarrow}/i))\ge k$. Note that $\tcrn(T^{\updownarrow})\ge k$ for every tile $T$ in a $k$-critical sequence.

\begin{corollary}
\label{cr:criticalTiles}
Let $\cT=(T_0,\ldots,T_m)$ be a $k$-critical sequence of tiles. Then, $T=\otimes \cT$ is a $k$-degenerate tile. If $m\ge 4k-2$ and $\cT$ is cyclically-compatible, then $\circ(T^{\updownarrow})$ is a $k$-crossing-critical graph.
\end{corollary}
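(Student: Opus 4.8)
The plan is to treat the two assertions in order, observing first that the hard content is entirely in the claim that $T=\otimes\cT$ is $k$-degenerate; the crossing-criticality of $\circ(T^{\updownarrow})$ will then follow almost formally. Throughout I will use that $T^{\updownarrow}=\otimes\cT^{\updownarrow}$, where $\cT^{\updownarrow}=(T_0,\ldots,T_{m-1},T_m^{\updownarrow})$ is the twist of $\cT$ (right-inverting the last tile only reverses the end right wall, leaving the join graph and the gluings unchanged), and correspondingly $\circ(T^{\updownarrow})=\circ\cT^{\updownarrow}$.

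For the degeneracy of $T=(G,\lw_0,\rw_m)$, planarity is immediate: each $T_i$ is perfect planar, so $\tcrn(T_i)=0$, and Lemma~\ref{lm:join} gives $\tcrn(\otimes\cT)\le\sum_i\tcrn(T_i)=0$. Perfectness is a routine chaining argument: \ref{pt:selfCompatible} holds since all walls have a common size (perfection of each $T_i$ plus compatibility); for \ref{pt:gmwConnected}, $G-\lw_0=(G_0-\lw_0)\cup G_1\cup\cdots\cup G_m$ is a union of connected pieces, each sharing a wall vertex with the next, hence connected, and symmetrically for $G-\rw_m$; for \ref{pt:pairedPath}, given positions $s<t$ I would concatenate the paired paths $P^i_{st},P^i_{ts}$ supplied by \ref{pt:pairedPath} in each $T_i$ along the shared wall vertices $(\rw_i)_s=(\lw_{i+1})_s$ and $(\rw_i)_t=(\lw_{i+1})_t$, obtaining two paths that remain disjoint because they are disjoint inside each tile and meet the shared walls only at the distinct positions $s\neq t$; condition \ref{pt:pathNotInWall} is then witnessed by a single such chained path. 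These checks are mechanical and I would only sketch them.

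The substance is the edge condition $\tcrn(T^{\updownarrow}-e)<k$ for every $e\in E(T)$. The edge $e$ lies in a unique tile $T_j$, and $k$-degeneracy of $T_j$ gives $\tcrn(T_j^{\updownarrow}-e)<k$; so the task is to re-express $T^{\updownarrow}-e$ as a join of tiles in which the position-$j$ tile is $T_j^{\updownarrow}-e$ and every other tile is planar, after which Lemma~\ref{lm:join} yields the bound $0+\tcrn(T_j^{\updownarrow}-e)<k$. The key auxiliary fact I would prove first is that \emph{full inversion preserves planarity}: reflecting a zero-crossing tile drawing of $T_i$ across the horizontal axis $(x,y)\mapsto(x,1-y)$ keeps each wall on its side but reverses its $y$-ordering, so it is a tile drawing of ${}^{\updownarrow}T_i^{\updownarrow}$, whence $\tcrn({}^{\updownarrow}T_i^{\updownarrow})=0$. (By contrast a single inversion need not preserve planarity, which is exactly the obstruction one must route around.) I would then apply the flips of $\cT^{\updownarrow}$ at positions $m-1,m-2,\ldots,j$ in that order; since flips leave the join tile $\otimes\cT^{\updownarrow}=T^{\updownarrow}$ and its end walls unchanged, and since each interior tile in this range is right-inverted by one flip and left-inverted by the next, bookkeeping the orientations shows the resulting sequence is $(T_0,\ldots,T_{j-1},\,T_j^{\updownarrow},\,{}^{\updownarrow}T_{j+1}^{\updownarrow},\ldots,{}^{\updownarrow}T_m^{\updownarrow})$. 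Here the tiles before position $j$ are untouched perfect planar tiles and those after are fully inverted, hence planar by the reflection fact, while position $j$ carries $T_j^{\updownarrow}$; deleting $e$ there and invoking Lemma~\ref{lm:join} gives $\tcrn(T^{\updownarrow}-e)<k$. I expect this orientation bookkeeping to be the main obstacle, and I would verify the flip sequence explicitly by induction on the number of flips applied.

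For the second assertion, the lower bound $\crn(\circ(T^{\updownarrow}))=\crn(\circ\cT^{\updownarrow})\ge k$ is Theorem~\ref{th:generalTilesIe} applied to the cyclically-compatible sequence $\cT^{\updownarrow}$ with distinguished index $l=m$: the hypothesis $m\ge 4k-2$ is given, every tile $T_i$ with $i\neq m$ is $k$-degenerate hence perfect planar, and the remaining requirement $\tcrn(\otimes(\cT^{\updownarrow}/i))\ge k$ for all $i\neq m$ is precisely the $k$-criticality condition $\min_{i\neq m}\tcrn(\otimes(\cT^{\updownarrow}/i))\ge k$. For the upper bound, for any edge $e$ the tile $T^{\updownarrow}-e$ is cyclically compatible, so the first part of Lemma~\ref{lm:join} gives $\crn(\circ(T^{\updownarrow})-e)=\crn(\circ(T^{\updownarrow}-e))\le\tcrn(T^{\updownarrow}-e)<k$, using the degeneracy established above. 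Since this holds for every edge, $\circ(T^{\updownarrow})$ is $k$-crossing-critical, completing the proof.
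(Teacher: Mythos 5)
Your proof is correct and takes essentially the same route as the paper's: planarity and the edge bound $\tcrn(T^{\updownarrow}-e)<k$ via Lemma~\ref{lm:join} applied to the flipped sequence $(T_0,\ldots,T_{j-1},T_j^{\updownarrow},{}^{\updownarrow}T_{j+1}^{\updownarrow},\ldots,{}^{\updownarrow}T_m^{\updownarrow})$ (exactly the paper's auxiliary sequence $\cT'$), and criticality via Theorem~\ref{th:generalTilesIe} with $l=m$ together with the first part of Lemma~\ref{lm:join}. The only difference is that you spell out two facts the paper leaves implicit, namely that full inversion preserves planarity and the flip bookkeeping showing $\otimes\cT'=T^{\updownarrow}$, which is a harmless (and welcome) addition.
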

\begin{proof}
Lemma \ref{lm:join} implies that $T$ is a planar tile. By induction it is easy to show that $T$ is a perfect tile. Let $e$ be an edge of $T$ and let $i$ be such that $e\in T_i$. The sequence $\cT'=(T_{0},\ldots,T_{i-1},T_{i}^{\updownarrow},{{{}^{\updownarrow}}T_{i+1}}^{\updownarrow},\ldots,{{{}^{\updownarrow}T_{m}^{\updownarrow}}})$ is equivalent to $\cT^{\updownarrow}$. Lemma \ref{lm:join} establishes $\tcrn(T^{\updownarrow}-e)=\tcrn((\otimes \cT')-e)\le \tcrn(T_{i}^\updownarrow-e)<k$, thus $T$ is a $k$-degenerate tile. 

Let $\cT$ be cyclically-compatible. Then $\crn((\circ T^{\updownarrow})-e)<k$ for any edge $e\in E(T)$. Theorem \ref{th:generalTilesIe} implies $\crn(\circ (T^{\updownarrow}))\ge k$ for $m\ge 4k-2$. Thus, $\circ (T^{\updownarrow})$ is a $k$-crossing-critical graph. 
\end{proof}

The above results provide sufficient conditions for the crossing numbers of certain graphs to be estimated in terms of the tile crossing numbers of their subgraphs. In what follows, we develop some techniques to estimate the tile crossing number. 

A general tool we employ for this purpose is the concept of a \DEF{gadget}. We do not define it formally; a gadget can be any structure inside a tile $T=(G,\lw,\rw)$, which guarantees a certain number of crossings in every tile drawing of $T$. Pinontoan and Richter used twisted pairs as gadgets \cite{PR406}, and we present staircase strips. Some other possible gadgets are cloned vertices, which were already used by Kochol \cite{K119}, wheel gadgets, and others, which were studied in \cite{SACN}. 

In general, there can be many gadgets inside a single tile. Whenever they are edge disjoint, the crossings they force in tile drawings are distinct. The following weakening of disjointness enables us to prove stronger results. For clarity, we first state the condition in its set-theoretic form.

Let $A_1,B_1,A_2,B_2$ be four sets. The unordered pairs $\{A_1,B_1\}$ and $\{A_2,B_2\}$ are 
\DEF{coherent} if one of the sets $X_i$, $X\in\{A,B\}$, $i\in \{1,2\}$, is disjoint from $A_{3-i}\cup B_{3-i}$.

\begin{lemma}
\label{lm:coherent}
Let $\{A,B\}$ and $\{A',B'\}$ be two pairs of sets. If they are coherent and 
\begin{equation}
\label{eq:coherent}
a\in A, b\in B, a'\in A'\hbox{ and }b'\in B',
\end{equation}
then the unordered pairs $\{a,b\}$ and $\{a',b'\}$ are distinct. Conversely, if (\ref{eq:coherent}) implies distinctness of $\{a,b\}$, $\{a',b'\}$ for every quadruple $a,b,a',b'$, then the pairs $\{A,B\}$, $\{A',B'\}$ are coherent.
\end{lemma}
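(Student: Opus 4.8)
The plan is to prove the two implications separately, exploiting the symmetry of the coherence condition for the forward direction and a short propositional reduction for the converse.

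For the forward implication, I would use that coherence is symmetric under relabeling $A\leftrightarrow B$, under $A'\leftrightarrow B'$, and under interchanging the two pairs $\{A,B\}\leftrightarrow\{A',B'\}$; the conclusion that $\{a,b\}$ and $\{a',b'\}$ are distinct is invariant under the same relabelings. Hence I may assume without loss of generality that the set witnessing coherence is $A$, i.e.\ $A\cap(A'\cup B')=\emptyset$. Given $a\in A$, $b\in B$, $a'\in A'$, $b'\in B'$, suppose for contradiction that $\{a,b\}=\{a',b'\}$. Then $a\in\{a',b'\}\subseteq A'\cup B'$, so $a\in A\cap(A'\cup B')=\emptyset$, which is absurd. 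Therefore $\{a,b\}\neq\{a',b'\}$, and the three remaining placements of the disjoint set follow by the symmetry above.

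For the converse I would argue by contraposition: assuming $\{A,B\}$ and $\{A',B'\}$ are not coherent, I would exhibit a quadruple satisfying \eqref{eq:coherent} with $\{a,b\}=\{a',b'\}$. Negating coherence yields the four simultaneous conditions $A\cap(A'\cup B')\neq\emptyset$, $B\cap(A'\cup B')\neq\emptyset$, $A'\cap(A\cup B)\neq\emptyset$, and $B'\cap(A\cup B)\neq\emptyset$. Recording whether the four intersections $A\cap A'$, $A\cap B'$, $B\cap A'$, $B\cap B'$ are nonempty by boolean flags $p,q,r,s$, these conditions read $p\vee q$, $r\vee s$, $p\vee r$, and $q\vee s$. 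A case split on $p$ then settles everything: if $p$ fails, the first and third conditions force $q$ and $r$; if $p$ holds but $s$ fails, the second and fourth force $r$ and $q$; and if $p$ and $s$ both hold we are already done. Thus non-coherence forces either $A\cap A'\neq\emptyset$ together with $B\cap B'\neq\emptyset$, or $A\cap B'\neq\emptyset$ together with $B\cap A'\neq\emptyset$.

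In the first configuration I would pick $x\in A\cap A'$ and $y\in B\cap B'$ and set $a=a'=x$, $b=b'=y$; in the second I would pick $x\in A\cap B'$ and $y\in B\cap A'$ and set $a=x$, $b=y$, $a'=y$, $b'=x$. In both cases $a\in A$, $b\in B$, $a'\in A'$, $b'\in B'$ while $\{a,b\}=\{a',b'\}$, contradicting the distinctness hypothesis and establishing coherence. The only real content is the propositional reduction in the converse, and even that is a four-variable case check rather than a genuine obstacle; the forward direction is immediate once the symmetry is used to fix which set is disjoint.
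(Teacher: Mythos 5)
Your proof is correct and follows essentially the same route as the paper's: the forward direction rests on the observation that equal unordered pairs place an element of each of the four sets in the union of the opposite pair (you phrase this as a contradiction after a WLOG symmetry reduction, the paper as a contrapositive), and the converse is proved by contraposition using the very same two witness configurations, namely $a=a'=x$, $b=b'=y$ when $x\in A\cap A'$, $y\in B\cap B'$, and the crossed quadruple when $x\in A\cap B'$, $y\in B\cap A'$. Your boolean-flag reduction is merely a tidier bookkeeping of the paper's case analysis, not a different argument.
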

\begin{proof}
Suppose the pairs are not distinct, then either $a=a'$ and $b=b'$, or $a=b'$ and $b=a'$. In both cases, every set has a member in the union of the other pair, and the pairs are not coherent.

For the converse, suppose the pairs would not be coherent. Then every set would contain an element in the union of the opposite pair. Let $x\in A\cap A'$, assuming the intersection is not empty. If there is an element $y\in B'\cap B$, then the quadruple $a=x$, $b=y$, $a'=x$, $b'=y$ satisfies (\ref{eq:coherent}) but does not form two distinct pairs. If $B\cap B'$ is empty, then there must be $a'\in B\cap A'$ and $b'\in B'\cap A$. The quadruple $a=a'$, $b=b'$, $a'$, $b'$ satisfies (\ref{eq:coherent}). Assuming $x\in A\cap B'$, a similar analysis applies and the claim follows.
\end{proof}

Lemma \ref{lm:coherent} has an immediate application to crossings: whenever the pairs of edges $\{e_x,f_x\}$ and $\{e_y,f_y\}$ are distinct for two crossings $x$ and $y$, the crossings $x$ and $y$ are distinct. Distinctness of crossings induced by two coherent pairs of sets of edges in a graph follows.

The notion of coherence can be generalized. Let $\lset{A_1}{A_m}$ and $\{B_1,\ldots,$ $B_n\}$ be two families of sets. They are \DEF{coherent} if the two pairs $\{A_i,A_j\}$ and $\{B_k,B_l\}$ are coherent for every $0\le i<j\le m$, $0\le k<l\le n$.

A path $P$ in $G$ is a \DEF{traversing path} in a tile $T=(G,\lw,\rw)$ if there exist indices $i(P)\in\{0,\ldots,|\lw|-1\}$ and $j(P)\in\{0,\ldots,|\rw|-1\}$ such that $P$ is a path from $\lw(P)=\lw_{i(P)}$ to $\rw(P)=\rw_{j(P)}$ and $\lw(P)$, $\rw(P)$ are the only wall vertices that lie on $P$. An (unordered) pair of disjoint traversing paths $\{P,Q\}$ is \DEF{aligned} if $i(P)<i(Q)$ is equivalent to $j(P)<j(Q)$, and \DEF{twisted} otherwise. Disjointness of the traversing paths in a twisted pair $\{P,Q\}$ implies that some edge of $P$ must cross some edge of $Q$ in any tile drawing of $T$. Two pairs $\{P,Q\}$ and $\{P',Q'\}$  of traversing paths in $T$ are \DEF{coherent} if $\{E(P),E(Q)\}$ and $\{E(P'),E(Q')\}$ are coherent. A family of pairwise coherent twisted (respectively, aligned) pairs of traversing paths in a tile $T$ is called a \DEF{twisted} (\DEF{aligned}) family in $T$.

\begin{lemma}[\cite{PR406}]
\label{lm:twistedbound}
Let $\cF$ be a twisted family in a tile $T$. Then, $\tcrn(T)\ge |\cF|$.
\end{lemma}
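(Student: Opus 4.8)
The plan is to fix an optimal tile drawing $D$ of $T$ and to construct an injection from the family $\cF$ into the set of crossings of $D$; since each member of $\cF$ will be charged to a \emph{distinct} crossing, this gives at least $|\cF|$ crossings in $D$, and optimality of $D$ yields $\tcrn(T)\ge|\cF|$ at once.

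First I would use the topological observation recorded immediately before the statement. Each pair $\{P,Q\}\in\cF$ is a twisted pair of \emph{vertex-disjoint} traversing paths, so in the drawing $D$ their endpoints appear on the boundary of the unit square in interleaving order (one path runs from an upper-left to a lower-right wall vertex, the other from a lower-left to an upper-right wall vertex). A Jordan-curve argument forces the two arcs to meet, and since $P$ and $Q$ share no vertex, every point of intersection is an honest edge crossing between an edge of $P$ and an edge of $Q$. Thus for each $F=\{P,Q\}\in\cF$ I may single out one crossing $x_F$ of $D$, realized by an unordered pair of edges $\{e_F,f_F\}$ with $e_F\in E(P)$ and $f_F\in E(Q)$.

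Next I would verify that the assignment $F\mapsto x_F$ is injective. Let $F=\{P,Q\}$ and $F'=\{P',Q'\}$ be distinct members of $\cF$. Since $\cF$ is a twisted family, $F$ and $F'$ are coherent, meaning the pairs of edge sets $\{E(P),E(Q)\}$ and $\{E(P'),E(Q')\}$ are coherent. Applying Lemma~\ref{lm:coherent} with $A=E(P)$, $B=E(Q)$, $A'=E(P')$, $B'=E(Q')$ and the elements $e_F,f_F,e_{F'},f_{F'}$, which satisfy the membership hypothesis~(\ref{eq:coherent}), I obtain that the edge pairs $\{e_F,f_F\}$ and $\{e_{F'},f_{F'}\}$ are distinct. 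By the remark following Lemma~\ref{lm:coherent}, two crossings whose defining edge pairs differ are themselves distinct, so $x_F\neq x_{F'}$. Note that no case analysis on whether $F$ and $F'$ share a path is needed: coherence is assumed pairwise, and Lemma~\ref{lm:coherent} is invoked directly regardless of the configuration.

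Finally, the injection exhibits $|\cF|$ pairwise distinct crossings in $D$, so $D$ has at least $|\cF|$ crossings and therefore $\tcrn(T)\ge|\cF|$. The only genuinely geometric input is the first step—that a twisted pair of disjoint traversing paths must cross—and I expect this to be the main obstacle in a self-contained treatment; here, however, it is exactly the interleaving fact already isolated in the text, so the working substance of the proof is the coherence bookkeeping of the second step, which reduces to a single application of Lemma~\ref{lm:coherent}.
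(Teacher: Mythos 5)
Your proof is correct and follows exactly the route the paper intends for this cited result: the Jordan-curve fact that a disjoint twisted pair must cross in any tile drawing (stated in the text just before the definition of twisted families), combined with Lemma~\ref{lm:coherent} and the remark that distinct edge pairs yield distinct crossings, which together give the injection from $\cF$ into the crossings of an optimal tile drawing. No gaps; the observation that pairwise coherence makes any case analysis on shared paths unnecessary is also accurate.
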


Let a tile $T$ be compatible with $T'$ and let $\{P,Q\}$ be a twisted pair of traversing paths of $T$. An aligned pair $\{P',Q'\}$ of traversing paths in $T'$ \DEF{extends} $\{P,Q\}$ to the right if $j(P)=i(P')$, $j(Q)=i(Q')$. Then $\{PP',QQ'\}$ is a twisted pair in $T\otimes T'$. For a twisted family $\cF$ in $T$, a \DEF{right-extending family} is an aligned family $\cF'$ in $T'$, for which there exists a bijection $\dfnc{e}{\cF}{\cF'}$, such that the pair $e(\{P,Q\})\in\cF'$ extends the pair $\{P,Q\}$ on the right. In this case, the family $\cF\otimes_e \cF'=\dset{\{PP',QQ'\}}{\{P',Q'\}=e(\{P,Q\})}$ is a twisted family in $T\otimes T'$. Extending to the left is defined similarly. Let $\cT=(T_0,\ldots,T_l,\ldots,T_m)$ be a compatible sequence of tiles and $\cF_l$ a twisted family in $T_l$. If, for $i=l+1,\ldots,m$ (respectively, $i=l-1,\ldots,0$), there exist aligned right- (left-) extending families $\cF_i$ of $\cF_l\otimes\ldots\otimes \cF_{i-1}$ ($\cF_{i+1}\otimes\ldots\otimes \cF_{l-1}$), then $\cF_l$  \DEF{propagates} to the right (left) in $\cT$. $\cF_l$ \DEF{propagates} in cyclically-compatible $\cT$ if it propagates both to the left and to the right in every cut $\cT/i$, $i=0,\ldots,m$, $i\neq l$.

A twisted family $\cF$ in a tile $T$ \DEF{saturates} $T$ if $\tcrn(T)=|\cF|$, i.e. there exists a tile drawing of $T$ with $|\cF|$ crossings. Clearly, all these crossings must be on the edges of pairs of paths in $\cF$. 

\begin{corollary}
\label{cr:saturated}
Let $\cT=(T_0,\ldots,T_l,\ldots,T_m)$ be a cyclically-compatible sequence of tiles and $\cF$ a twisted family in $T_l$ that propagates in $\cT$. If $m\ge 4|\cF|-2$ and the tile $T_i$ is a perfect planar tile for every $i=0,\ldots,m$, $i\neq l$, then $\crn(\circ\cT)\ge |\cF|$. If $\cF$ saturates $T_l$, then the equality holds.
\end{corollary}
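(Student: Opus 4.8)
The plan is to obtain the lower bound from Theorem \ref{th:generalTilesIe} with $k=|\cF|$ and the matching upper bound from Lemma \ref{lm:join}. The perfect planar hypothesis on the tiles $T_i$ with $i\neq l$ and the counting hypothesis $m\ge 4|\cF|-2$ are precisely two of the three premises of Theorem \ref{th:generalTilesIe}, so the only thing left to verify for that theorem is the tile-crossing bound $\tcrn(\otimes(\cT/i))\ge |\cF|$ for every $i\neq l$.

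To establish this bound I would exploit propagation cut by cut. Fix $i\neq l$; since $l\neq i$, the tile $T_l$ still occurs in the linear sequence $\cT/i=(T_{i+1},\ldots,T_m,T_0,\ldots,T_{i-1})$. By hypothesis $\cF$ propagates in $\cT$, so in particular it propagates both to the left and to the right in $\cT/i$. Concatenating the right-extending families through the tiles that follow $T_l$ with the left-extending families through the tiles that precede it yields, by the definition of $\otimes_e$ and the fact recorded there that joining a twisted family with extending aligned families produces a twisted family, a single twisted family $\cG_i$ in the whole join $\otimes(\cT/i)$. Because every extension is governed by a bijection, the cardinality is preserved throughout, so $|\cG_i|=|\cF|$. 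Lemma \ref{lm:twistedbound} then gives $\tcrn(\otimes(\cT/i))\ge |\cG_i|=|\cF|$. With this in hand, Theorem \ref{th:generalTilesIe} applied with $k=|\cF|$ yields $\crn(\circ\cT)\ge |\cF|$.

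For the upper bound under the saturation hypothesis I would simply chain the two inequalities of Lemma \ref{lm:join}. Since $\circ\cT=\circ(\otimes\cT)$, the first inequality gives $\crn(\circ\cT)\le \tcrn(\otimes\cT)$, and the second gives $\tcrn(\otimes\cT)\le\sum_{i=0}^m\tcrn(T_i)$. Every tile $T_i$ with $i\neq l$ is planar, hence $\tcrn(T_i)=0$, while saturation means $\tcrn(T_l)=|\cF|$; the sum therefore collapses to $|\cF|$, so $\crn(\circ\cT)\le|\cF|$. Combined with the lower bound, this gives the claimed equality.

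The main obstacle lies entirely in the lower bound, and specifically in the bookkeeping of propagation inside a single cut: one must check that splicing together the left- and right-extending families around the position of $T_l$ in $\cT/i$ genuinely produces one coherent twisted family on the entire join $\otimes(\cT/i)$, rather than merely on each half separately, and that the bijective nature of the extensions carries the cardinality $|\cF|$ through every step. Once this is granted by the extension machinery developed just before the statement, the remaining steps are formal invocations of the already-established Theorem \ref{th:generalTilesIe} and Lemma \ref{lm:join}.
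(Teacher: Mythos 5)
Your proposal is correct and follows essentially the same route as the paper: the paper's own (much terser) proof likewise uses propagation together with Lemma \ref{lm:twistedbound} to get $\min_{i\neq l}\tcrn(\otimes(\cT/i))\ge|\cF|$ and then invokes Theorem \ref{th:generalTilesIe}, with the saturation/equality case resting implicitly on exactly the Lemma \ref{lm:join} argument you spell out. The only difference is that you make explicit the splicing of left- and right-extending families into one twisted family per cut and the collapse of the tile-crossing-number sum, details the paper leaves to the reader.
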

\begin{proof}
As $\cF$ propagates in $\cT$, Lemma \ref{lm:twistedbound} implies $\min_{i\neq l}\tcrn(\otimes (\cT/i))\ge|\cF|$.  Theorem \ref{th:generalTilesIe} establishes the claim.
\end{proof}

\begin{figure}
\psfragscanon
\psfrag{AA}{(a)}
\psfrag{BB}{(b)}
\psfrag{A}{$A$}
\psfrag{B}{$B$}
\psfrag{C}{$C$}
\psfrag{D}{$D$}
\psfrag{E}{$E$}
\psfrag{F}{$F$}
\psfrag{G}{$G$}
\psfrag{P}{$P_i$}
\psfrag{P1}{$P_1$}
\psfrag{Pi}{$P_{i+1}$}
\psfrag{Q}{$Q_i$}
\psfrag{R}{$R_i$}
\psfrag{S}{$S_i$}
\psfrag{Sw}{$S_{2w+1}$}
\psfrag{Sub}{\small $2w+1$ subtiles}
\begin{center}
\includegraphics[height=68mm]{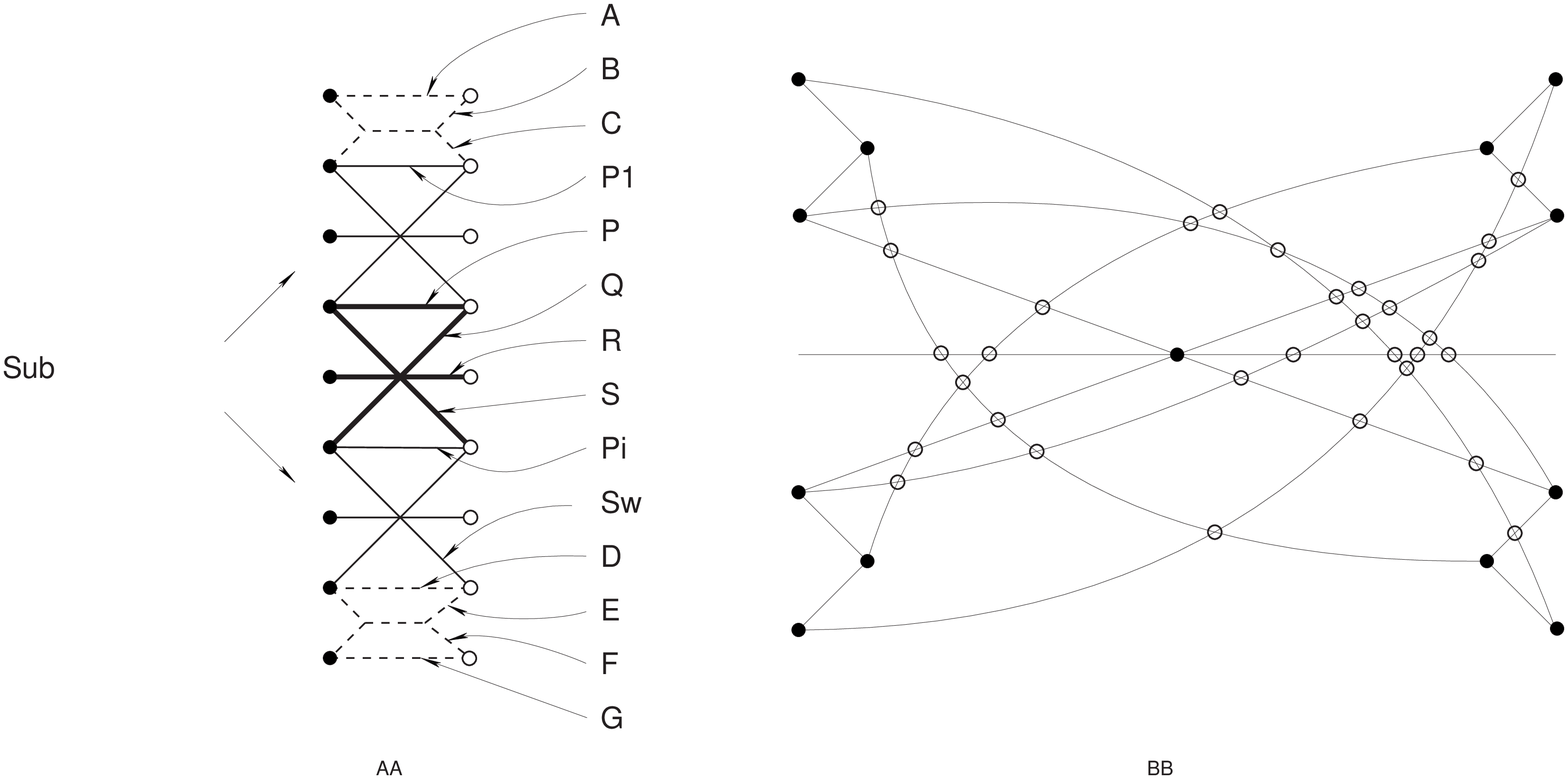}
\end{center}
\caption{(a) The tile $H_w$, $w=1$. (b) An optimal tile drawing of $H_0$.}
\label{fg:graph6}
\end{figure}

Let $H_w$ be a tile, which is for $w=1$ presented in Figure \ref{fg:graph6} (a). It is constructed by joining two subtiles, denoted by dashed edges, with a sequence of $2w+1$ subtiles, of which one is drawn with thick edges. The left (right) wall vertices of $H_w$ are colored black (white). $H_w$ is a perfect planar tile. Let $\cH(w,s)=(H_w,\ldots,H_w)$ be a sequence of tiles of length $s$ and let $H(w,s)=\circ(\cH(w,s)^{\updownarrow})$ be the cyclization of its twist. 

\begin{proposition}
\label{pr:graphsH}
The graph $H(w,s)$ is a crossing-critical graph with crossing number $k=32 w^2+56w+ 31$ whenever $s\ge 4k-1$.
\end{proposition}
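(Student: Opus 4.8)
The plan is to recognize $H(w,s)=\circ(\cH(w,s)^{\updownarrow})$ as the cyclization of the twist of a $k$-critical sequence and to invoke Corollary \ref{cr:criticalTiles} with $k=32w^2+56w+31$. Writing $\cH(w,s)=(T_0,\ldots,T_{s-1})$ with every $T_i=H_w$, the last tile has index $m=s-1$, so the hypothesis $m\ge 4k-2$ of the corollary is exactly $s\ge 4k-1$. Since twisting a compatible sequence inverts only the right wall of its last tile, which is the right wall of the join, we have $\otimes(\cH(w,s)^{\updownarrow})=(\otimes \cH(w,s))^{\updownarrow}$, and therefore $H(w,s)$ coincides with the graph $\circ(T^{\updownarrow})$ produced by the corollary for $T=\otimes \cH(w,s)$. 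It thus suffices to prove that $\cH(w,s)$ is a $k$-critical sequence, i.e.\ that $H_w$ is $k$-degenerate and that $\min_{i\neq m}\tcrn(\otimes(\cH(w,s)^{\updownarrow}/i))\ge k$.

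The first substantial task is to show $H_w$ is $k$-degenerate. Perfectness and planarity are built into the construction: the drawing sketched in Figure \ref{fg:graph6}(b) certifies $\tcrn(H_w)=0$, and the connectivity and paired-path conditions \ref{pt:gmwConnected}--\ref{pt:pairedPath} can be read off from the two bounding subtiles together with the chain of $2w+1$ interior subtiles. It remains to establish $\tcrn(H_w^{\updownarrow}-e)<k$ for every edge $e$, which I would do via matching bounds on $\tcrn(H_w^{\updownarrow})$. For the lower bound I would exhibit an explicit twisted family $\cF$ of traversing path-pairs in $H_w^{\updownarrow}$ of cardinality exactly $k=32w^2+56w+31$, pairwise coherent, so that Lemma \ref{lm:twistedbound} gives $\tcrn(H_w^{\updownarrow})\ge|\cF|=k$. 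For the upper bound I would realize $\cF$ as a \emph{saturating} family, drawing $H_w^{\updownarrow}$ by deforming the planar picture of Figure \ref{fg:graph6}(b) so that the reversal of the right wall forces precisely these $|\cF|$ crossings and no others. Degeneracy then follows once one checks that every edge $e$ lies on a traversing path of some pair of $\cF$ that carries a crossing in the saturating drawing: deleting $e$ removes that crossing, and the induced drawing of $H_w^{\updownarrow}-e$ witnesses $\tcrn(H_w^{\updownarrow}-e)<k$.

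For the lower bound on the cuts I would show that $\cF$ \emph{propagates} in $\cH(w,s)^{\updownarrow}$. Each copy of $H_w$ carries, to the left and to the right, an aligned family extending $\cF$, obtained from the disjoint wall-to-wall paths guaranteed by perfectness \ref{pt:pairedPath}; concatenating these across consecutive copies preserves coherence, so $\cF$ threads through every block of untwisted tiles. Because an $i$-cut with $i\neq m$ deletes a plain copy $T_i=H_w$ but leaves the twisted copy $T_m^{\updownarrow}=H_w^{\updownarrow}$ in place, the twist — and with it the propagated family — survives in $\otimes(\cH(w,s)^{\updownarrow}/i)$, whence $\tcrn(\otimes(\cH(w,s)^{\updownarrow}/i))\ge|\cF|=k$ by Lemma \ref{lm:twistedbound} (equivalently, Corollary \ref{cr:saturated}). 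This gives $\min_{i\neq m}\tcrn(\otimes(\cH(w,s)^{\updownarrow}/i))\ge k$ and completes the verification that $\cH(w,s)$ is $k$-critical.

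With $\cH(w,s)$ shown $k$-critical and cyclically-compatible and $m=s-1\ge 4k-2$, Corollary \ref{cr:criticalTiles} yields that $H(w,s)=\circ((\otimes \cH(w,s))^{\updownarrow})$ is $k$-crossing-critical, so $\crn(H(w,s))\ge k$. The matching upper bound is immediate from Lemma \ref{lm:join}: the $s-1$ untwisted copies of $H_w$ are planar, so $\crn(H(w,s))\le\tcrn((\otimes \cH(w,s))^{\updownarrow})\le\sum_i\tcrn(T_i)=\tcrn(H_w^{\updownarrow})=k$, giving $\crn(H(w,s))=k$. I expect the main obstacle to be the second paragraph: pinning down the constant $32w^2+56w+31$ requires an exact combinatorial count of a saturating twisted family in $H_w^{\updownarrow}$ — verifying pairwise coherence, that the accompanying drawing introduces no superfluous crossings, and that every edge is active so the degeneracy inequality is strict — and it is precisely this bookkeeping over the $2w+1$ interior subtiles that produces the quadratic dependence on $w$.
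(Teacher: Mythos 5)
Your overall strategy is the same as the paper's (exhibit a twisted family in $H_w^{\updownarrow}$, show it propagates and saturates, obtain the crossing number from Corollary \ref{cr:saturated}, verify degeneracy, and conclude criticality via Corollary \ref{cr:criticalTiles}), and your index bookkeeping ($m=s-1$, so $m\ge 4k-2$ is exactly $s\ge 4k-1$) and the identity $\otimes(\cH(w,s)^{\updownarrow})=(\otimes\cH(w,s))^{\updownarrow}$ are correct details the paper leaves implicit. However, your propagation step has a genuine gap: you derive the left- and right-extending families from perfectness \ref{pt:pairedPath} alone. An extending family must be an \emph{aligned family}, i.e.\ its pairs must be pairwise coherent, whereas \ref{pt:pairedPath} supplies, for each single index pair $i<j$, one disjoint pair of wall-to-wall paths and says nothing about coherence between the pairs chosen for different index pairs: the path at position $i$ used in one pair may share edges with the paths of another pair in such a way that no set of either pair is disjoint from the union of the other, and then Lemma \ref{lm:twistedbound} no longer certifies $|\cF|$ \emph{distinct} crossings, so $\tcrn(\otimes(\cH(w,s)^{\updownarrow}/i))\ge k$ does not follow. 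Indeed, if perfectness alone implied propagation of an arbitrary twisted family, the propagation hypothesis in Corollary \ref{cr:saturated} would be redundant for perfect tiles. The paper avoids this by constructing an explicit aligned family $\cH_w$ of size exactly $k$ from the concrete traversing paths $A,\ldots,G$ of Figure \ref{fg:graph6} inside every copy of $H_w$, whose coherences can be checked and whose count produces $32w^2+56w+31$; that construction, which you defer as bookkeeping, together with its extension copy by copy, is the real substance of the proposition (carried out in \cite{SACN,MAT}).

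Your degeneracy argument is also not sound as stated. If $e$ lies on a path $P$ of a pair $\{P,Q\}\in\cF$ that carries a crossing $x$ in the saturating drawing, deleting $e$ does not remove $x$ from the induced drawing unless $x$ happens to lie on $e$ itself; the induced drawing of $H_w^{\updownarrow}-e$ may still have all $k$ crossings. Deleting $e$ only destroys the \emph{lower} bound coming from that pair; to conclude $\tcrn(H_w^{\updownarrow}-e)<k$ you must produce a cheaper drawing, i.e.\ show that after removing $e$ the saturating drawing can be rerouted through the gap so as to save at least one crossing. This rerouting is exactly what the paper asserts by inspection of Figure \ref{fg:graph6}(b) and its generalization to $w>0$. (In addition, it is not automatic that every edge of $H_w$ lies on a path of the family, so even the hypothesis of your check would need separate verification.)
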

\begin{proof}
Using the traversing paths $A,\ldots,G$ of $H_w$ depicted in Figure \ref{fg:graph6}, we construct an aligned family $\cH_w$ of size $k$, cf.~\cite{SACN,MAT}. The corresponding family $\cH_w'$ in $H_w^{\updownarrow}$ is twisted and propagates in $\cH(w,s)^{\updownarrow}$. Figure \ref{fg:graph6} (b) presents an optimal tile drawing of $H_0$, its generalization to $w>0$ demonstrates that $\cH_w'$ saturates $H_w^{\updownarrow}$. The crossing number of $H(w,s)$ is established by Corollary \ref{cr:saturated}. 

The number of crossings can be decreased after removing any edge from the drawing in Figure \ref{fg:graph6} (b). This also applies to the generalization of the drawing, thus $H_w$ is a $k$-degenerate tile. The propagation of the twisted family $\cF'$  demonstrates $\tcrn\left(\otimes(\cH(w,s)^{\updownarrow}/i)\right)\ge k$ for any $i\ne s$, thus $\cH(w,s)$ is a $k$-critical sequence. Criticality of $H(w,s)$ follows by Corollary \ref{cr:criticalTiles}.
\end{proof}

\section{Staircase strips in tiles}
\label{sc:staircase}

In this section, we study twisted staircase strips. Using these gadgets, we construct new crossing-critical graphs with average degree close to three. 

\begin{figure}
\psfragscanon
\psfrag{S1}{$u=u_1=u_1'=u_2=v_1$}
\psfrag{T}{$v=u_n'=v_{n-1}'=v_n=v_n'$}
\psfrag{S2}{$s$}
\psfrag{S3}{$s'$}
\psfrag{S4}{$s''$}
\psfrag{A}{$u_2'$}
\psfrag{B}{$u_3=u_3'$}
\psfrag{C}{$u_5$}
\psfrag{D}{$u_5'$}
\psfrag{E}{$u_n$}
\psfrag{H}{$v_1'$}
\psfrag{I}{$v_2$}
\psfrag{F}{$S_2$}
\psfrag{G}{$v_{n-1}$}
\begin{center}
\includegraphics[width=135mm]{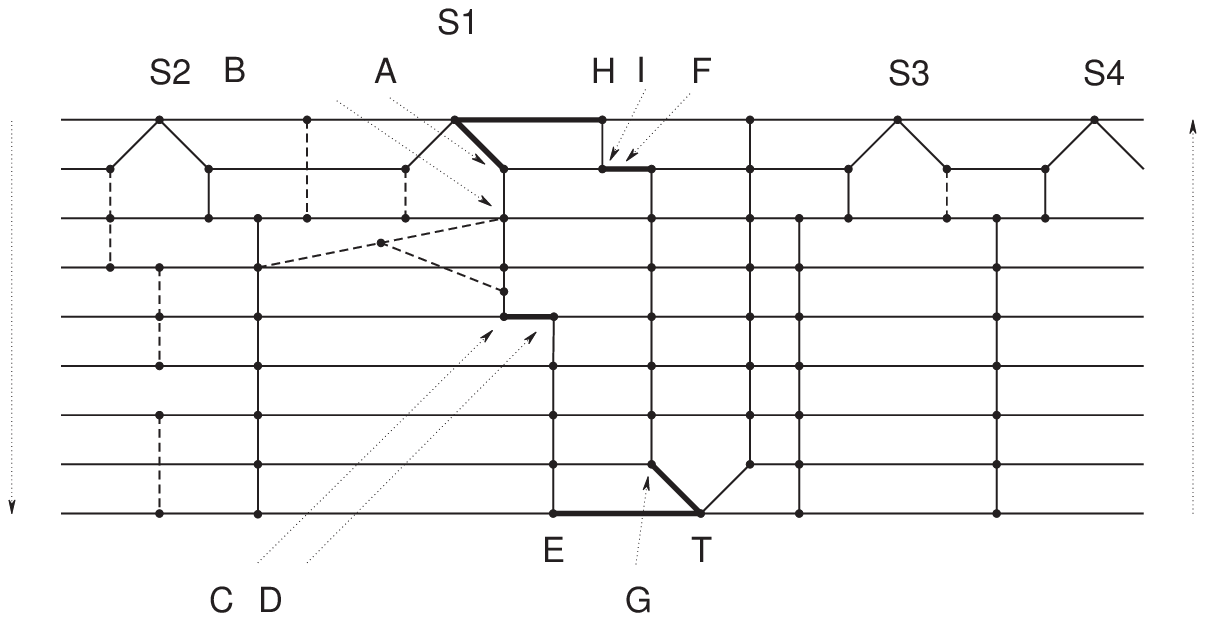}
\end{center}
\caption{A general staircase strip in a tile. Leftmost and rightmost arrows indicate the ordering of the wall vertices. Dashed edges are part of the tile but not of the staircase strip.}
\label{fg:generalStaircase}
\end{figure}

Let $\cP=\lset{P_1,P_2}{P_n}$ be a sequence of traversing paths in a tile $T$ with the property $\lw(P_i)\le\lw(P_j)$ and $\rw(P_i)\ge\rw(P_j)$ for $i<j$. Assume that they are pairwise disjoint, except for the pairs $P_1,P_2$ and $P_{n-1},P_{n}$, which may share vertices, but not edges. For $u\in V(P_1)\cap V(P_2)$ and $v\in V(P_{n-1})\cap V(P_n)$, we say that $u$ is \DEF{left} of $v$ (cf. Figure \ref{fg:generalStaircase}) if there exist internally disjoint paths $Q_u$ and $Q_v$ from $u$ to $v$ such that:  
\begin{enumerate}[label=(s.\roman{*}), ref=(s.\roman{*})]
\item\label{pt:ui} there exist vertices $u_1,u_1',\ldots,u_n,u_n'$ that appear in this order on $Q_u$,
\item\label{pt:vi} there exist vertices $v_1,v_1',\ldots,v_{n},v_{n}'$ that appear in this order on $Q_v$,
\item\label{pt:uv} $u=u_1=u_1'=u_2=v_1$ and $v=u_n'=v_{n-1}'=v_n=v_n'$, 
\item\label{pt:notIn} $v_1',v_2,v_2',u_2'\not\in P_1\cap P_2$ and $v_{n-1},u_{n-1},u_{n-1}',u_n\not \in P_{n-1}\cap P_n$,
\item\label{pt:Ri} for $i=1,\ldots,n$, $R_i:=u_iP_iu_i'\subseteq P_i\cap Q_u$, with equality for $i\neq n-1$,
\item\label{pt:Si} for $i=1,\ldots,n$, $S_i:=v_iP_iv_i'\subseteq P_i\cap Q_v$, with equality for $i\neq 2$,
\item\label{pt:Rn1S2} $R_{n-1}=(P_{n-1}\cap Q_u)-R_n$ and $S_2=(P_2\cap Q_v)-S_1$,
\item\label{pt:v1v2} if $'u,u'\in P_1\cap P_2$ are two vertices with $v_1'\in {'u}P_1u'$, then $v_2\in {'u}P_2u'$, 
\item\label{pt:un1un} if $'v,v'\in P_{n-1}\cap P_{n}$ are two vertices with $u_n\in {}{'v}P_nv'$, then $u_{n-1}'\in {'v}P_{n-1}v'$, and 
\item\label{pt:order} $\lw(P_i)u_iu_i'v_iv_i'\rw(P_i)$ lie in this order on $P_i$ for $i=1,\ldots,n$.
\end{enumerate}
Similarly, we define when $u$ is \DEF{right} of $v$. We say that $\cP$ forms a \DEF{twisted staircase strip} of width $n$ in the tile $T$ if the vertex $u$ is either left or right of the vertex $v$ whenever $u\in V(P_1)\cap V(P_2)$ and $v\in V(P_{n-1})\cap V(P_n)$. 

Vertex $u$ in Figure \ref{fg:generalStaircase} is left of $v$. The features establishing this fact are emphasized. The subpaths $u_i'Q_uu_{i+1}$ and $v_i'Q_vv_{i+1}$ are, for $i=2,\ldots,n-1$, internally disjoint from $P_j$ by \ref{pt:Ri} and \ref{pt:Si}, for any $j=1,\ldots,n$, and their length is at least one. They are represented by solid vertical edges in the figure. However, the length of $R_i$ and $S_i$, $i=1,\ldots,n$, may be zero; the thick edges in the figure emphasize the instances when their length is positive. Solid edges in Figure \ref{fg:generalStaircase} are part of a twisted staircase strip, dashed edges are not. Note that the vertices $u$ and $s$ are left of $v$ and that the vertices $s'$ and $s''$ are right of $v$.

\begin{theorem}
\label{th:tcrnStrip}
Let $T$ be a tile and assume that $\cP=\lset{P_1,P_2}{P_n}$ forms a twisted staircase strip of width $n$ in $T$. Then, $\tcrn(T)\ge {n\choose 2}-1$.
\end{theorem}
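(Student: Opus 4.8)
The plan is to produce a twisted family $\cF$ of ${n\choose 2}-1$ pairwise coherent twisted pairs of traversing paths in $T$; Lemma \ref{lm:twistedbound} then yields $\tcrn(T)\ge|\cF|={n\choose 2}-1$. The starting point is the observation that every pair $\{P_i,P_j\}$ with $i<j$ is twisted: the hypotheses $\lw(P_i)\le\lw(P_j)$ and $\rw(P_i)\ge\rw(P_j)$ give $i(P_i)<i(P_j)$ while $j(P_i)>j(P_j)$. Moreover the edge sets $E(P_1),\dots,E(P_n)$ are pairwise disjoint, since distinct $P_i$ share at most vertices. Consequently, by Lemma \ref{lm:coherent}, any two distinct index pairs are coherent: if $\{i,j\}\neq\{k,l\}$, some index, say $i$, lies outside $\{k,l\}$, and then $E(P_i)$ is disjoint from $E(P_k)\cup E(P_l)$. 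The ${n\choose 2}-2$ index pairs whose two paths are vertex-disjoint --- that is, all pairs except $\{P_1,P_2\}$ and $\{P_{n-1},P_n\}$ --- are therefore genuine twisted pairs forming a coherent twisted family, and already give $\tcrn(T)\ge{n\choose 2}-2$. The entire difficulty is to recover one further crossing from the two shared ends.

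First I would determine what a recoverable extra pair can look like. Coherence with all the direct pairs is very restrictive: if $Z$ denotes the set of indices $i$ for which the two new paths use some edge of $P_i$, then for coherence with every direct pair $\{P_a,P_b\}$ one of $P_a,P_b$ must avoid both new paths, so $Z$ may contain no direct pair. Since any three indices contain a direct pair (the only non-direct pairs being $\{1,2\}$ and $\{n-1,n\}$), the set $Z$ must lie inside $\{1,2\}$ or inside $\{n-1,n\}$. Thus the recovered pair has to be built, up to edges lying off all of $P_1,\dots,P_n$, from $P_1\cup P_2$ alone (resolving the left end) or from $P_{n-1}\cup P_n$ alone (resolving the right end). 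Since $P_1\cup P_2$ carries only the wall vertices $\lw(P_1),\lw(P_2),\rw(P_1),\rw(P_2)$ and off-path connectors add none, a short check of the wall orders shows that such a pair is twisted exactly when it keeps the original endpoint assignment, i.e.\ one path runs $\lw(P_1)$ to $\rw(P_1)$ and the other $\lw(P_2)$ to $\rw(P_2)$ (the swapped assignment is aligned). So the goal reduces to routing two \emph{vertex-disjoint} traversing paths with these endpoints inside $P_1\cup P_2$ together with off-path connector edges supplied by the staircase --- and symmetrically at the right end.

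This is precisely where the staircase certificate does its work. By definition of a twisted staircase strip, for the chosen $u\in V(P_1)\cap V(P_2)$ and $v\in V(P_{n-1})\cap V(P_n)$ the vertex $u$ is left of $v$ or right of $v$, which supplies the two internally disjoint paths $Q_u,Q_v$ and the subpaths $R_i,S_i\subseteq P_i$. I would use the off-path connector steps of $Q_u$ and $Q_v$ near the appropriate end to detour one of the two paths around the shared vertex set, with conditions \ref{pt:order}, \ref{pt:Ri}, \ref{pt:Si}, \ref{pt:Rn1S2} locating exactly which portions of each $P_i$ are used, and \ref{pt:notIn}, \ref{pt:v1v2}, \ref{pt:un1un} guaranteeing that the detour meets the shared set $P_1\cap P_2$ (respectively $P_{n-1}\cap P_n$) in the way needed to make the two resulting paths vertex-disjoint while preserving their traversal direction. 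Because the detour stays within $P_1\cup P_2$ (resp.\ $P_{n-1}\cup P_n$) and off-path connectors, the recovered pair is automatically coherent with every direct pair by the disjointness argument above, so adjoining it enlarges the family to size ${n\choose 2}-1$.

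The main obstacle is this last routing step: turning the conditions defining ``$u$ left (right) of $v$'' into an explicit pair of vertex-disjoint traversing paths and verifying both their disjointness and that they remain twisted. This is the delicate, case-heavy heart of the argument, and it is also where the sharp constant appears --- the certificate guarantees that one end, the one selected by the left/right dichotomy, can be resolved, but in general not both, which is exactly why the bound is ${n\choose 2}-1$ rather than ${n\choose 2}$ or ${n\choose 2}-2$. I would organize the verification around the monotonicity of the wall orders along the $P_i$ from \ref{pt:order}, treating the left-of and right-of cases symmetrically.
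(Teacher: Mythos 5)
Your overall strategy---enlarge the coherent family to ${n\choose 2}-1$ twisted pairs and invoke Lemma \ref{lm:twistedbound}---cannot work, because in general no twisted family of that size exists in $T$. Concretely, take the minimal staircase of width $n=3$: paths $P_1,P_2,P_3$ with $P_1\cap P_2=\{u\}$, $P_2\cap P_3=\{v\}$ ($u$ preceding $v$ on $P_2$), and whose only off-path edges are the two rungs of $Q_u$ and $Q_v$, namely one edge from $u_2'\in P_2$ to $u_3\in P_3$ and one edge from $v_1'\in P_1$ to $v_2\in P_2$. One checks that all of \ref{pt:ui}--\ref{pt:order} hold (conditions \ref{pt:v1v2} and \ref{pt:un1un} are vacuous since the intersections are single vertices), so Theorem \ref{th:tcrnStrip} asserts $\tcrn(T)\ge 2$. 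But in this tile every traversing path starting at $\lw(P_1)$ or at $\lw(P_2)$ passes through $u$, and every traversing path ending at $\rw(P_2)$ or at $\rw(P_3)$ passes through $v$; hence in every \emph{disjoint} twisted pair one path must end at $\rw(P_1)$ and the other must start at $\lw(P_3)$. Since every path ending at $\rw(P_1)$ uses the last edge of $P_1$, and every path starting at $\lw(P_3)$ uses the first edge of $P_3$, any two twisted pairs share these edges ``in parallel,'' so no two twisted pairs are coherent: the largest twisted family has size $1={3\choose 2}-2$, strictly below the bound you need. The same example defeats your routing step directly: inside $P_1\cup P_2$ together with the single connector from $v_1'$ to $v_2$, every $\lw(P_1)$--$\rw(P_1)$ path and every $\lw(P_2)$--$\rw(P_2)$ path passes through $u$ (the rung only lets a path switch from $P_1$ onto $P_2$; it cannot bypass $u$), so the vertex-disjoint twisted pair you want to manufacture at the left end does not exist, and symmetrically at the right end. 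Your closing claim that the certificate always resolves at least one end is therefore false.

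This is precisely why the paper does not argue abstractly but inside a drawing. Its proof takes an optimal tile drawing $D$, assumes toward a contradiction that the ${n\choose 2}-2$ crossings $x_{i,j}$ of the disjoint pairs are \emph{all} crossings of $D$, finds shared vertices $u\in V(P_1)\cap V(P_2)$ and $v\in V(P_{n-1})\cap V(P_n)$ at which the curves cross without an edge crossing, and then uses the left/right certificate to form the curves $\gamma^u=P_1uQ_uvP_{n-1}$ and $\gamma^v=P_2uQ_vvP_n$, whose endpoints interlace on the boundary of the square; Claims 1--6 and the nine-case analysis show that their forced intersection can be neither any $x_{i,j}$ nor the touchings at $u$ and $v$, a contradiction. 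The point your approach misses is that the extra crossing ``floats'': in different optimal drawings it occurs between different pairs of edges (between $P_1$ and $P_3$, on a rung, or even between edges of $P_1$ and $P_2$ themselves), so it cannot be certified by any one fixed additional pair of paths, which is what a twisted family would require. A smaller, separate slip: your coherence analysis (``$Z$ may contain no direct pair'') overlooks the case allowed by Lemma \ref{lm:coherent} in which one new path meets both $P_a$ and $P_b$ while the other new path meets neither.
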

\begin{proof}
If a wall vertex $v$ in a tile $T$ has degree $d$, then the tile crossing number of $T$ is not changed if $d$ new neighbors $v_1,\ldots,v_d$ of degree one are attached to $v$ and $v$ is in its wall replaced by $v_1,\ldots,v_d$. Thus, we may assume that all paths in $\cP$ have distinct startvertices in $\lw$ and distinct endvertices in $\rw$.

Let $D$ be any optimal tile drawing of $T$. By Lemma \ref{lm:twistedbound}, there are at least ${n\choose 2}-2$ crossings in $D$, since the set $\cF = \dset{\{P_i,P_j\}}{1\le i<j\le n} \setminus \{\{P_1,P_2\},\{P_{n-1},P_n\}\} $ is a twisted family in $T$. For $\{P_i,P_j\}\in\cF$, let $P_i$ cross $P_j$ at $x_{i,j}$. In what follows, we contradict the assumption 
\begin{equation}
\label{eq:assumption}
\hbox{\textsl{$x_{i,j}$ are all the crossings of $D$}}.
\end{equation}

For $i=1,\ldots,n$, let $P_i$ be oriented from $\lw(P_i)$ to $\rw(P_i)$. The assumption (\ref{eq:assumption}) implies that the induced drawing of every $P_i$ is a simple curve. This curve splits the unit square $\Delta=I\times I$ containing $D$ into two disjoint open disks, the lower disk $\Delta_i^-$ bordering $[0,1]\times \{0\}$ and the upper disk $\Delta_i^+$ bordering $[0,1]\times \{1\}$.

\textbf{Claim 1:} \textsl{At $x_{i,j}$, the path $P_j$ crosses from $\Delta_i^-$ into $\Delta_i^+$ and the path $P_i$ crosses from $\Delta_j^+$ into $\Delta_j^-$.} This follows from $i<j$ and the orientation of paths $P_i$ and $P_j$.

As $\lw(P_2)\in\Delta_1^-$ and $\rw(P_2)\in\Delta_1^+$, there is a vertex $u\in V(P_1)\cap V(P_2)$ where $P_2$ crosses $P_1$ from $\Delta_1^-$ to $\Delta_1^+$. Also, there is a vertex $v\in V(P_{n-1})\cap V(P_n)$, such that $P_{n-1}$ crosses from $\Delta_n^+$ into $\Delta_n^-$ at $v$. Then Claim 1 holds for $x_{1,2}=u$ and $x_{n-1,n}=v$. 

By symmetry, we may assume that $u$ is left of $v$ in $T$. Let $Q_u$ and $Q_v$ be the corresponding paths in $T$. $P_2$ enters $\Delta_1^+$ at $u$, and (\ref{eq:assumption}), \ref{pt:uv}, \ref{pt:notIn}, and \ref{pt:Ri} imply $u_2'\in\Delta_1^+$. Similarly, $v_{n-1}\in\Delta_n^+$ by (\ref{eq:assumption}), \ref{pt:uv}, \ref{pt:notIn}, and \ref{pt:Si}. 

\textbf{Claim 2:} \textsl{If any point $y$ of $w_i'Q_w$ lies in $\Delta_i^-$ for $w\in\{u,v\}$ and $i\in\{1,\ldots,n\}$, $w_i\neq u_{n-1}$, then the path $Q_w$ must at $w_i'$ enter $\Delta_i^-$.} If $w\neq u$ or $i\neq n-1$, the segment $w_i'Q_wy$ does not cross from $\Delta_i^+$ to $\Delta_i^-$ due to Claim 1, thus it must lie in $\Delta_1^-$. 

\textbf{Claim 3:} \textsl{If there is a point $y$ of $Q_ww_i$ in $\Delta_i^-$ for $w\in\{u,v\}$ and $i\in\{1,\ldots,n\}$, $w_i\neq v_2$, then $Q_w$ must at $w_i$ leave $\Delta_i^-$.} Otherwise, the segment $yQ_ww_i$ would contradict Claim 1 at $x_{ji}$ for some $j<i$.

\textbf{Claim 4:} \textsl{For $3\le i\le n$, neither of $u_i,v_i$ lies in $\Delta_1^-$.} Assume some $u_i\in\Delta_1^-$. As $u_2'\in\Delta_1^+$, the path $Q_u$ would contradict Claim 1 at $x_{1,j}$ for some $j$, $1<j<i$. Assume $v_i\in\Delta_1^-$. Due to the orientation of $P_i$, (\ref{eq:assumption}), and \ref{pt:order}, $u_i\in\Delta_1^-$, a contradiction.

\textbf{Claim 5:} \textsl{For $1\le i\le n-2$, neither of $u_i', v_i'$ lies in $\Delta_n^-$.} Assume $v_i'\in\Delta_n^-$. As $v_{n-1}\in\Delta_n^+$, the path $Q_v$ would contradict Claim 1 at $x_{j,n}$ for some $j$, $i<j<n$. To complete the proof, observe that if $u_i'\in\Delta_n^-$ then $v_i'\in\Delta_n^-$ by (\ref{eq:assumption}) and \ref{pt:order}.

In what follows, we prove that the subdrawing of $D$ induced by $Q_u\cup Q_v\cup\left(\bigcup_i P_i\right)$ contains a \textsl{new} crossing, distinct from $x_{i,j}$, which contradicts (\ref{eq:assumption}). We first simplify the subdrawing and obtain a drawing $D'$ in which for every $i,j$, $1\le i<j\le n$, the paths $P_i$ and $P_j$ share precisely one point. We use the following steps:
\begin{itemize}
\item All vertices of $P_1\cap P_2$, $P_{n-1}\cap P_n$ at which the two paths do not cross are split.

\item As $D$ is a tile drawing, there is an even number of crossing vertices in $V(P_1)\cap V(P_2)$ preceding $u$ on $P_1$. For a consecutive pair $x,y$ of such vertices, the paths $P_1$ and $P_2$ are uncrossed by rerouting $xP_1y$ along $xP_2y$ and vice versa. The vertices $x$ and $y$ are split afterwards. The segments of $P_{n-1}$ and $P_n$ following $v$ are uncrossed in a similar manner. By \ref{pt:ui}, \ref{pt:vi}, \ref{pt:uv}, and \ref{pt:order}, the paths $Q_u$ and $Q_v$ are not affected.

\item For any pair of vertices of $S_1\cap P_2-\{u\}$, the paths $P_1$ and $P_2$ are uncrossed in the same way. Due to \ref{pt:Ri}, the vertex $u_2'$ is not on any of the two affected segments. Due to \ref{pt:notIn}, \ref{pt:Si} and \ref{pt:v1v2}, neither of the segments can contain $v_1'$, $v_2$ or $v_2'$. Thus, $u_2',v_2,v_2'\in P_2$ and $v_1'\in P_1$ after the uncrossing. As all the pairs can be uncrossed, we may assume there is at most one crossing vertex in $S_1\cap P_2$ distinct from $u$. But existence of such vertex implies by \ref{pt:v1v2} that $v_2\in\Delta_1^-$, further implying by \ref{pt:Si} and \ref{pt:Rn1S2} that $v_2'\in\Delta_1^-$. By (\ref{eq:assumption}), the segment $v_2'Q_vv_3$ does not cross $P_1$, thus $v_3$ lies in $\Delta_1^-$, contradicting Claim~4.

\item As in the previous step, the paths $P_{n-1}$ and $P_n$ are uncrossed at any pair of vertices of $R_n\cap P_{n-1}$. Existence of a single remaining crossing vertex in $R_n\cap P_{n-1}$ would by \ref{pt:notIn}, \ref{pt:Ri}, \ref{pt:Rn1S2}, and \ref{pt:un1un} imply $u_{n-2}'\in\Delta_n^-$, violating Claim~5.

\item As $D'$ is a tile drawing, there is an even number of crossing vertices in $v_1'P_1\cap P_2$. By \ref{pt:v1v2} and \ref{pt:order}, uncrossing the paths $P_1$, $P_2$ as before does not affect $Q_v$. Similarly, uncrossing the paths $P_{n-1}$ and $P_nu_n$ does not affect $Q_u$ due to \ref{pt:un1un} and \ref{pt:order}.
\end{itemize}

All crossings in thus obtained drawing $D'$ are also crossings of $D$, but some crossings of $P_1$ with $P_i$ may have become crossings of $P_2$ and $P_i$ and vice versa. The same applies to the pair $(P_{n-1},P_n)$. We replace the labels $x_{i,j}$ accordingly. Until the end of the proof, we are concerned with the drawing $D'$ only. In the new drawing, Claim 2 holds for $w_i=u_{n-1}$, Claim 3 for $w_i=v_2$, Claim 4 for $i=2$, and Claim 5 for $i=n-1$.

\textbf{Claim 6:} \textsl{For $1\le i<j\le n$, the subpath $R_i$ of $Q_u$ does not cross the subpath $S_j$ of $Q_v$ at $x_{i,j}$.} Suppose it does and take the maximal such $i$. By Claim 1 and \ref{pt:order}, $u_j$ and $v_j$ lie in $\Delta_i^-$. Claim 2 implies that $Q_u$ and $Q_v$ enter $\Delta_i^-$ at $u_i'$ and $v_i'$. Similarly, $u_i'$ and $v_i'$ lie in $\Delta_j^-$ and Claim 3 implies that $Q_u$ and $Q_v$ leave $\Delta_j^-$ at $u_j$, $v_j$. Thus, the segments $u_i'Q_uu_j$ and $v_i'Q_vv_j$ lie in the intersection $\Delta'=\Delta_i^-\cap\Delta_j^-$. $\Delta'$ is a disk as $P_i$ and $P_j$ do not self-cross and cross each other only once. The vertices $u_i'$, $v_i'$, $u_j$, $v_j$ lie in this order on the boundary of $\Delta'$, so the segments must intersect in $\Delta'$. This contradicts either the assumption (\ref{eq:assumption}) or the maximality of $i$. Claim 6 follows.

Let $\gamma^u$ denote the simplified path $P_1uQ_uvP_{n-1}$: whenever this path self-crosses, the circuit is shortcut. Let  $\gamma_1^u$, $\gamma_2^u$, and $\gamma_3^u$ be the (possibly empty) segments of $\gamma^u$ corresponding to $P_1$, $Q_u$, and $P_{n-1}$. Similarly, let $\gamma^v$ denote the simplified path $P_2uQ_vvP_n$ with the segments $\gamma_1^v$, $\gamma_2^v$, and $\gamma_3^v$. Using the induced orientation of $\gamma^u$ and $\gamma^v$, we define disks $\Delta_u^+$, $\Delta_u^-$, $\Delta_v^+$, and $\Delta_v^-$ to be the respective lower and upper disks. The endvertices of $\gamma^u$ and $\gamma^v$ interlace in the boundary of $[0,1]\times [0,1]$, thus these paths must cross at some crossing $z=z_{i,j}$ of segments $\gamma_i^u$ and $\gamma_j^v$. We contradict the assumption that $z=x_{i,j}$ for some $i,j$. Due to the definition of $\gamma^u$ and $\gamma^v$, there are nine possibilities for $z$:

\begin{enumerate}[label=(\arabic{*})]
\item $z=z_{1,1}=u$ is a touching of $\gamma^u$ and $\gamma^v$.
\item $z=z_{1,2}=x_{1,i}$ for some $i>2$. Thus, $v_i\in\Delta_1^-$ contradicts Claim 4.
\item $z=z_{1,3}=x_{1,n}$ implies $u_n\in\Delta_1^-$. 
\item $z=z_{2,1}=x_{i,2}$  for some $i>2$, then $u_i\in\Delta_1^-$.
\item $z=z_{2,2}=x_{i,j}$ is a crossing of $S_i$ and $R_j$. Claim 6 implies that $1\le i<j\le n$. Choose smallest such $i$ and then smallest $j$. $Q_v$ starts in $\Delta_u^-$ and since $z$ is the first crossing of $Q_v$ with $\gamma^u$ (or one of the other eight cases would apply), $Q_v$ leaves $\Delta_u^-$ and enters $\Delta_u^+$ at $z$. As the orientation of $\gamma^u$ is aligned with the orientation of $R_j$, $S_i$ leaves $\Delta_j^-$, which contradicts Claim 1.
\item $z=z_{2,3}=x_{i,n}$ for some $i<n$, then $u_i'\in\Delta_n^-$, which contradicts Claim 5.
\item $z=z_{3,1}=x_{2,n-1}$ implies $v_{2}'\in\Delta_n^-$.
\item $z=z_{3,2}=x_{i,n-1}$ is the crossing of $P_{n-1}$ and $S_i$, then $v_i'\in\Delta_n^-$.
\item $z=z_{3,3}=v$ is a touching of $\gamma^u$ and $\gamma^v$.
\end{enumerate}

Thus, $\gamma^u$ and $\gamma^v$ must cross at a new crossing and the statement of the theorem follows.
\end{proof}

\begin{figure}
\psfragscanon
\psfrag{A}{(a)}
\psfrag{B}{(b)}
\begin{center}
\includegraphics[width=135mm]{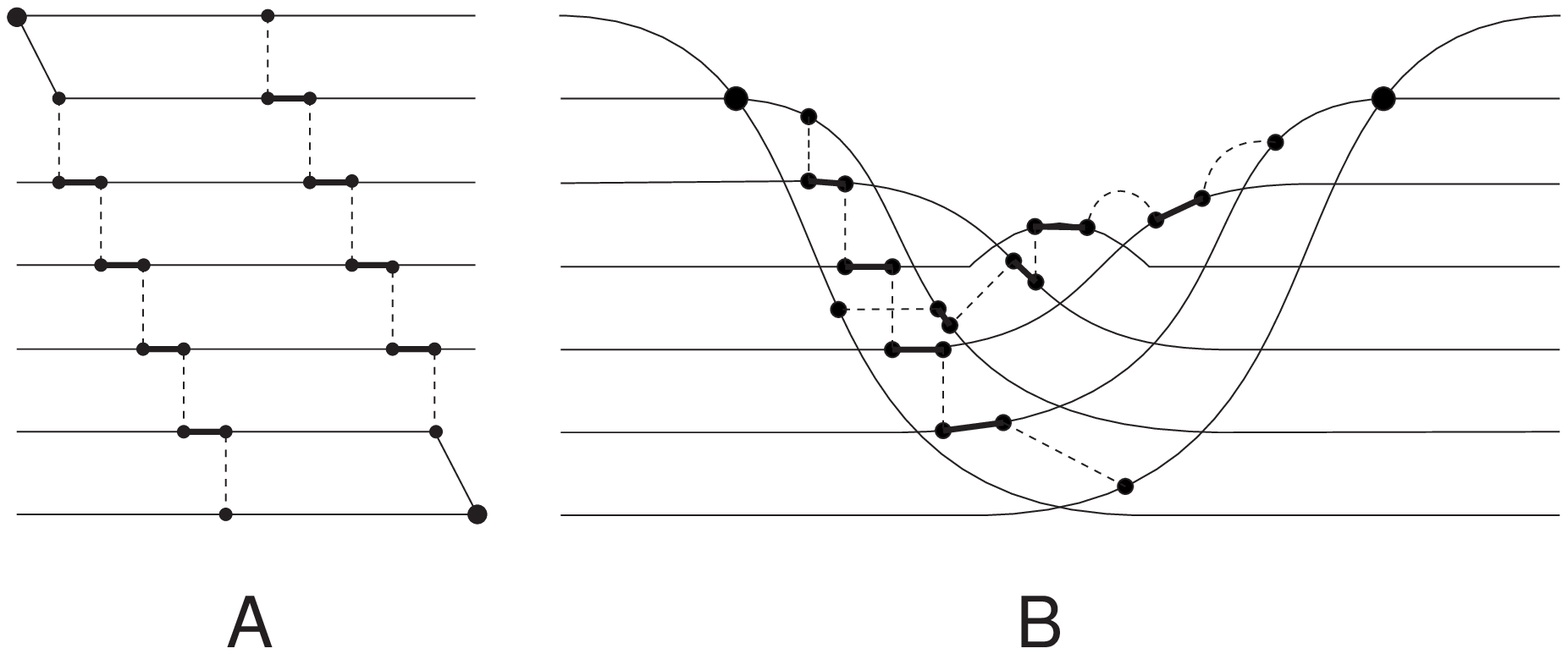}
\end{center}
\caption{(a) The tile $S_7$. (b) A tile drawing of $S_7$ with 20 crossings.}
\label{fg:staircase}
\end{figure}

The reader shall have no difficulty rigorously describing the tile $S_n$, $n\ge 3$, an example of which is for $n=7$ presented in Figure \ref{fg:staircase} (a). A \DEF{staircase tile} of width $n\ge 3$ is a tile obtained from $S_n$ by contracting some (possibly zero) thick edges of $S_n$. Such a tile is a perfect planar tile. A \DEF{staircase sequence} of width $n$ is a sequence of tiles of odd length in which staircase tiles of width $n$ alternate with inverted staircase tiles of width $n$. Any staircase sequence is a cyclically-compatible sequence of tiles. 

\begin{proposition}
\label{pr:graphsS}
Let $\cT$ be a staircase sequence of width $n$ and odd length $m\ge 4{n\choose 2}-5$. The graph $G=\circ (\cT^{\updownarrow})$ is a crossing-critical graph with $\crn(G)={n\choose 2}-1$.
\end{proposition}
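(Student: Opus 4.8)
The plan is to apply the machinery of Section~\ref{sc:tiles}, specifically Corollary~\ref{cr:saturated} and Corollary~\ref{cr:criticalTiles}, reducing everything to verifying that a staircase tile $S_n$ carries a twisted staircase strip of width $n$ and that its twist admits a tile drawing meeting the bound of Theorem~\ref{th:tcrnStrip} with equality. First I would identify, inside each staircase tile of width $n$, the $n$ traversing paths $P_1,\ldots,P_n$ satisfying $\lw(P_i)\le\lw(P_j)$ and $\rw(P_i)\ge\rw(P_j)$ for $i<j$, which by construction of $S_n$ form a twisted staircase strip in the sense defined above. Theorem~\ref{th:tcrnStrip} then immediately gives $\tcrn(S_n)\ge\binom{n}{2}-1$, and the same applies to the inverted staircase tiles. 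The key point is that these strips \emph{propagate}: because consecutive tiles in a staircase sequence are a staircase tile and its inversion, the twisted family lifts to an aligned right- (and left-) extending family across each join, so for every cut $\cT/i$ the propagated family has size $\binom{n}{2}-1$, yielding $\min_{i\neq l}\tcrn(\otimes(\cT/i))\ge\binom{n}{2}-1$ via Lemma~\ref{lm:twistedbound}.

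Next I would exhibit saturation. The drawing in Figure~\ref{fg:staircase}~(b) shows a tile drawing of $S_7$ with exactly $20=\binom{7}{2}-1$ crossings, and I would argue that the analogous drawing for general $n$ realizes exactly $\binom{n}{2}-1$ crossings, so the twisted family saturates the appropriate tile and $\tcrn=\binom{n}{2}-1$ holds with equality. Contracting thick edges (the operation defining a general staircase tile) does not increase the tile crossing number nor destroy perfection or planarity, so the same bounds persist for any staircase tile. With $k=\binom{n}{2}-1$, the hypothesis $m\ge 4k-2=4\binom{n}{2}-6$ is what is needed in Corollary~\ref{cr:saturated}; the stated bound $m\ge 4\binom{n}{2}-5$ is a convenient odd integer exceeding this, and oddness is required so that the staircase and inverted staircase tiles alternate and still close up cyclically. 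Corollary~\ref{cr:saturated} then delivers $\crn(G)=\binom{n}{2}-1$.

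For criticality, I would invoke Corollary~\ref{cr:criticalTiles} by checking that the staircase sequence is $k$-critical with $k=\binom{n}{2}-1$. This requires that each staircase (or inverted staircase) tile be $k$-degenerate, i.e.\ perfect, planar, and $\tcrn(T^{\updownarrow}-e)<k$ for every edge $e$, and that $\min_{i\neq m}\tcrn(\otimes(\cT^{\updownarrow}/i))\ge k$. The latter is exactly the propagation fact already established. For $k$-degeneracy I would examine the saturating drawing and verify that deleting any single edge of $S_n$ destroys at least one of the $\binom{n}{2}-1$ forced crossings, so the tile crossing number of the punctured twisted tile drops below $k$; since no edge lies in more than one twisted pair of the saturating family in an essential way, removing it frees a crossing. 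Corollary~\ref{cr:criticalTiles} then yields that $\circ(\cT^{\updownarrow})$ is $k$-crossing-critical, which is precisely $G$.

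The main obstacle I anticipate is the verification of propagation together with the saturation argument for general $n$: one must check that the $n$ traversing paths genuinely satisfy all the bookkeeping conditions \ref{pt:ui}--\ref{pt:order} of the twisted staircase strip at the interface between a staircase tile and its inverted neighbor, and that the aligned extending families match up under the required bijection in every cut. The degeneracy step is conceptually easy but combinatorially delicate, since one must be certain that \emph{every} edge deletion, including edges on the dashed (non-strip) portion and the contractible thick edges, reduces the count below $k$ rather than merely permitting an alternative drawing that preserves it. The cleanest route is to read off all of this from the explicit generalization of the drawing in Figure~\ref{fg:staircase}, much as Proposition~\ref{pr:graphsH} handles the tiles $H_w$, and to leave the routine but lengthy checking of \ref{pt:ui}--\ref{pt:order} to the reader, as the paper itself signals.
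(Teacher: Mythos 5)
There is a genuine gap, and it sits at the very first step of your argument. You place the twisted staircase strip \emph{inside each staircase tile} $S_n$ and conclude $\tcrn(S_n)\ge{n\choose 2}-1$. That cannot be right: a staircase tile is a perfect \emph{planar} tile, so $\tcrn(S_n)=0$, and its natural traversing paths run from $\lw_i$ to $\rw_i$, i.e.\ they are pairwise aligned, not twisted. Your claim is also self-defeating, since Corollaries~\ref{cr:saturated} and~\ref{cr:criticalTiles}, which you invoke, require all tiles but one to be planar --- impossible if every tile forced ${n\choose 2}-1$ crossings. The twist exists only because of the single right-inverted tile introduced by the operation $\cT^{\updownarrow}$; consequently the staircase strip of width $n$ lives in the join of the whole cut, $\otimes(\cT^{\updownarrow}/i)$, viewed as one composite tile. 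That is exactly how the paper argues: it applies Theorem~\ref{th:tcrnStrip} directly to $\otimes(\cT^{\updownarrow}/i)$ for each $i$ (this is where oddness of $m$ is used, to guarantee the cut still carries a strip of width $n$), obtains the matching upper bound from planarity of the staircase tiles and Lemma~\ref{lm:join} (the $20$-crossing drawing of Figure~\ref{fg:staircase}(b) is really a tile drawing of the right-inverted tile, not of the planar $S_7$), and then concludes $\crn(G)={n\choose 2}-1$ from Corollary~\ref{cr:generalTiles}, not Corollary~\ref{cr:saturated}.

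Moreover, your substitute route through propagation and Corollary~\ref{cr:saturated} cannot be repaired to give the stated constant. A twisted family consists of pairwise coherent \emph{disjoint} twisted pairs of traversing paths; in a staircase strip the pairs $\{P_1,P_2\}$ and $\{P_{n-1},P_n\}$ share vertices and are therefore ineligible, so the largest twisted family one can extract has size ${n\choose 2}-2$, and Lemma~\ref{lm:twistedbound} together with Corollary~\ref{cr:saturated} stops one crossing short of ${n\choose 2}-1$. Recovering that single extra crossing is precisely the point of the new gadget and of the long proof of Theorem~\ref{th:tcrnStrip}; no bookkeeping with extending families will produce it. Once this is fixed, the rest of your sketch (degeneracy of $S_n$ via redrawing after any edge deletion, and criticality via Corollary~\ref{cr:criticalTiles}) does match the paper; also note that $m\ge 4{n\choose 2}-5$ is the exact translation of the hypothesis of Theorem~\ref{th:generalTilesIe} under the indexing shift (a sequence of $m$ tiles is $(T_0,\ldots,T_{m-1})$), not merely a convenient odd integer exceeding $4k-2$.
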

\begin{proof}
A generalization of the drawing in Figure \ref{fg:staircase} demonstrates that $\tcrn(S_n)\le{n\choose 2}-1$. As $m$ is odd, the cut $\cT^{\updownarrow}/i$ contains a twisted staircase strip of width $n$ for any $i=0,\ldots,m-1$, and Theorem \ref{th:tcrnStrip} implies $\tcrn(\cT^{\updownarrow}/i)\ge {n\choose 2}-1$. Planarity of tiles $S_n$ and Lemma \ref{lm:join} establish equality and Corollary \ref{cr:generalTiles} implies $\crn(G)={n\choose 2}-1$. 

After removing any edge from $S_n$, we can decrease the number of crossings in the drawing in Figure \ref{fg:staircase} (b). Thus, $S_n$ is a $\left({n\choose 2}-1\right)$-degenerate tile and $\cT$ is a $\left({n\choose 2}-1\right)$-critical sequence; the criticality of $G$ follows by Corollary \ref{cr:criticalTiles}.
\end{proof}

Let $S_n'$ be the inverted tile $S_n$. Let $\cS_{n,m}$ be the staircase sequence $(S_n,S_n',S_n,S_n',\ldots,S_n)$ of odd length $m\ge 1$ and $\cS(n,m,c)$ the set of graphs obtained from $\circ(\cS_{n,m}^{\updownarrow})$ by contracting $c$ thick edges in the tiles of $\cS_{n,m}$. These graphs almost settle Question \ref{qu:salazar}:

\begin{proposition}[\cite{SACN,MAT}]
\label{pr:apbOdd}
Let $r=3+\frac{a}{b}$ with $1\le a < b$. If $a+b$ is odd, then, for $n\ge \max\left(\frac{5b-a}{2(b-a)},\frac{7a+b}{4a},4 \right)$, $m(t)=(2t + 1)(a + b)$, and $c(t)=(2t + 1)((4n - 7)a - b)$, the family $\cQ(a,b,n)=\bigcup_{t=n^2}^\infty \cS(n,m(t),c(t))$ contains $\left({n\choose 2}-1\right)$-crossing-critical graphs with average degree $r$.
\end{proposition}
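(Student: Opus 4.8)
The statement bundles three assertions: every graph in $\cQ(a,b,n)$ is $\left({n\choose 2}-1\right)$-crossing-critical, every such graph has average degree exactly $r$, and the family is infinite. I would handle criticality and average degree separately, as they draw on different inputs.

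For criticality I would reduce to Proposition \ref{pr:graphsS}. By definition, each member of $\cS(n,m(t),c(t))$ is obtained from $\circ(\cS_{n,m(t)}^{\updownarrow})$ by contracting $c(t)$ thick edges; since contracting thick edges of $S_n$ again yields staircase tiles of width $n$, the contracted sequence is once more a staircase sequence of width $n$, of the same length $m(t)$. Here the parity hypothesis enters: $a+b$ odd is exactly what makes $m(t)=(2t+1)(a+b)$ odd, as a staircase sequence requires. Thus Proposition \ref{pr:graphsS} applies once its length condition $m(t)\ge 4{n\choose 2}-5$ is verified, and for $t\ge n^2$ and $a+b\ge 3$ we have $m(t)\ge(2n^2+1)\cdot 3>2n^2-2n-5=4{n\choose 2}-5$. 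Hence every graph in the family is crossing-critical with crossing number ${n\choose 2}-1$.

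For the average degree I would first read off from the explicit tile $S_n$ the two structural facts that drive everything: (i) $G_0:=\circ(\cS_{n,m}^{\updownarrow})$ has exactly $(4n-7)m$ vertices, and (ii) all its vertices have degree three except for exactly $m$ of degree four, so its degree sum is $3(4n-7)m+m$. Granting these, each contracted thick edge joins two degree-three vertices (the endpoints of an $R_i$ or an $S_i$) and so merges them into one degree-four vertex, dropping both $|V|$ and $|E|$ by one and the degree sum by two. Writing $V_0=(4n-7)m$ and performing $c$ contractions, the average degree becomes $\frac{3V_0+m-2c}{V_0-c}$; setting this equal to $3+\frac{a}{b}=\frac{3b+a}{b}$ and solving gives $c(a+b)=aV_0-bm=m[(4n-7)a-b]$, that is $c=\frac{m[(4n-7)a-b]}{a+b}$. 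With $m=m(t)=(2t+1)(a+b)$ this is precisely $c(t)=(2t+1)((4n-7)a-b)$, so the prescribed contraction number delivers average degree exactly $r$. Since this depends only on $|V|-c$ and $|E|-c$, it is the same for every choice of which thick edges to contract, so all of $\cS(n,m(t),c(t))$ has average degree $r$.

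The bounds on $n$ are what make the contraction realizable: $n\ge\frac{7a+b}{4a}$ is equivalent to $(4n-7)a-b\ge 0$, i.e.\ $c(t)\ge 0$, while $n\ge\frac{5b-a}{2(b-a)}$ is equivalent to $\frac{(4n-7)a-b}{a+b}\le 2n-6$, i.e.\ $c(t)$ does not exceed the total number $(2n-6)m(t)$ of thick edges present (which also leaves enough freedom to choose the contracted edges so the result stays simple), and $n\ge 4$ is the structural lower bound on the width. Infinitude is then immediate, since $m(t)$, and hence the vertex count $(4n-7)m(t)-c(t)$, strictly increases with $t$, so the graphs for distinct $t$ are pairwise non-isomorphic. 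The main obstacle is establishing facts (i) and (ii) directly from the definition of $S_n$ — pinning down the vertex count $(4n-7)m$ and the exact count $m$ of degree-four vertices, and confirming that every contractible thick edge joins two degree-three vertices so that the counting identity holds verbatim; once that bookkeeping is in place, the remainder is the displayed algebraic identity together with a direct appeal to Proposition \ref{pr:graphsS}.
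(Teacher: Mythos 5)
Your proposal is correct and follows what is surely the intended argument: the paper itself gives no proof of Proposition~\ref{pr:apbOdd} (it is quoted from \cite{SACN,MAT}), and the natural proof is exactly your reduction of criticality to Proposition~\ref{pr:graphsS} --- with $a+b$ odd forcing $m(t)$ odd, and $t\ge n^2$ giving $m(t)\ge 4\binom{n}{2}-5$ --- combined with the contraction bookkeeping for the average degree. The two counts you flag as the remaining obstacle, namely $|V(\circ(\cS_{n,m}^{\updownarrow}))|=(4n-7)m$ and degree sum $(12n-20)m$, are in fact corroborated within the paper: matching the $\cS(n,m,c)$-contribution to the denominator and numerator of (\ref{eq:avgDeg}) forces $|V|=(4n-7)m-c$ and $|E|=(6n-10)m-c$ for its members, and your realizability constraints $0\le c(t)\le (2n-6)m(t)$ (equivalent to your two lower bounds on $n$) are precisely the conditions $c\ge 0$ and $c\le 2m(n-3)$ appearing among (\ref{eq:cond1})--(\ref{eq:condN}) in the proof of Theorem~\ref{th:main}, so only the routine figure-based counting remains.
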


Demanding the average degree of the graphs in $\cS(m,n,c)$ to be $r=3+\frac{a}{b}$, $1\le a<b$, $a+b$ even, forces $m(t)$ to be an even number and the resulting graphs are no longer critical.

\section{Zip product and criticality of graphs}
\label{sc:zip}

Zip product is an operation on graphs or their drawings that was used in \cite{CNCPP,CNCPT} to establish the crossing number of Cartesian products of several graphs with trees. For two graphs $G_i$ ($i=1,2$), their vertices $v_i$ of degree $d$ not incident with multiple edges (we call such vertices \DEF{simple}), and a bijection $\dfnc{\sigma}{N_1}{N_2}$ of the neighborhoods $N_i$ of $v_i$ in $G_i$, the \DEF{zip product} of the graphs $G_1$ and $G_2$ according to $\sigma$ is the graph $G_1\odot_\sigma G_2$, obtained from the disjoint union of $G_1-v_1$ and  $G_2-v_2$ after adding the edge $u\sigma(u)$ for every $u\in N_1$. We call $\sigma$ a \DEF{zip function} of the graphs.
Let $\GvvG$ denote the set of all pairwise nonisomorphic graphs, obtained as a zip product $\GsG$ for some bijection $\dfnc{\sigma}{N_1}{N_2}$. 

A drawing $D_i$ of a graph $G_i$ imposes a cyclic ordering of the edges incident with $v_i$, which defines a labeling $\dfnc{\pi_i}{N_i}{\lset{1}{d}}$ up to a cyclic permutation. A \DEF{zip function} of the drawings $D_1$ and $D_2$ at vertices $v_1$ and $v_2$ is $\dfnc{\sigma}{N_1}{N_2}$, $\sigma=\pi_2^{-1}\pi_1$. 

\begin{lemma}[\cite{CNCPP}]
\label{lm:upper}
For $i=1,2$, let $D_i$ be an optimal drawing of $G_i$, let $v_i\in V(G_i)$ be a simple vertex of degree $d$, and let $\sigma$ be a zip function of $D_1$ and $D_2$ at $v_1$ and $v_2$. Then, $\crn(G_1\odot_\sigma G_2)\le \crn(G_1)+\crn(G_2)$.
\end{lemma}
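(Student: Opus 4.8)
\textbf{Proof plan for Lemma \ref{lm:upper}.}
The plan is to build an explicit drawing of $G_1\odot_\sigma G_2$ from the two optimal drawings $D_1$ and $D_2$ and count its crossings, showing they do not exceed $\crn(G_1)+\crn(G_2)$. The central idea is that the zip function $\sigma=\pi_2^{-1}\pi_1$ is precisely the bijection that matches up the rotations of $v_1$ and $v_2$, so the $d$ edges incident with $v_1$ in $D_1$ and the $d$ edges incident with $v_2$ in $D_2$ can be glued together along a common small disk boundary without creating any new crossings among the added edges $u\sigma(u)$, $u\in N_1$.

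First I would isolate the vertex $v_i$ in $D_i$ by choosing a small open disk $\Delta_i$ centered at the image of $v_i$ so that $\Delta_i$ meets $D_i$ only in $v_i$ and in $d$ initial segments of the edges of $N_i$, one per incident edge. Because $v_i$ is simple (degree $d$, no multiple edges), the $d$ edge-ends cross the boundary circle $\partial\Delta_i$ in $d$ distinct points, occurring in the cyclic order recorded by $\pi_i$. I would then delete $v_i$ together with the interior portions of its edges inside $\Delta_i$, leaving $D_i-v_i$ drawn in the complement of $\Delta_i$, with $d$ labeled free ends on $\partial\Delta_i$.

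Next I would superimpose the two punctured drawings so that $\partial\Delta_1$ and $\partial\Delta_2$ coincide as a single circle, placing $D_1-v_1$ outside this circle and $D_2-v_2$ inside it (after an orientation-reversing homeomorphism of one disk, which a cyclic ordering always permits). The key point is the alignment: since the free ends of $D_1-v_1$ appear on the circle in the cyclic order $\pi_1$ and those of $D_2-v_2$ in the order $\pi_2$, the definition $\sigma=\pi_2^{-1}\pi_1$ guarantees that $u$ and $\sigma(u)$ land at the same boundary position for every $u\in N_1$. Thus each new edge $u\sigma(u)$ can be drawn as a single short arc joining two coincident boundary points, introducing no crossings at all among these arcs and none with either drawing. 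The resulting picture is a valid drawing of $G_1\odot_\sigma G_2$ whose crossings are exactly the crossings of $D_1$ (all outside the circle) plus those of $D_2$ (all inside), so it has $\crn(G_1)+\crn(G_2)$ crossings, giving the asserted upper bound.

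The main obstacle, and the only place requiring genuine care, is verifying that the two rotation systems can be matched consistently: I must ensure that the cyclic orders on $\partial\Delta_1$ and $\partial\Delta_2$ can be made to agree under $\sigma$ after gluing, which forces reversing the orientation of one disk so that one cyclic order is read clockwise and the other counterclockwise. Once the labels are aligned in this way the edges $u\sigma(u)$ are nested arcs near a common circle and are pairwise noncrossing; confirming this, and confirming that no new crossing is forced between an added arc and an edge of the original drawings lying strictly outside or inside the circle, completes the argument.
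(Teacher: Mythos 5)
Your proof is correct: the paper does not prove Lemma \ref{lm:upper} itself but quotes it from \cite{CNCPP}, and your disk-excision-and-gluing argument is essentially the proof given in that reference (delete a small disk around each $v_i$, identify the two boundary circles, and join the matched edge-ends by short arcs). In particular, you correctly isolated the one genuinely delicate point — that one drawing must be reflected (the identification of the circles is orientation-reversing) so that the rotations matched by $\sigma=\pi_2^{-1}\pi_1$ align without forcing crossings among the new edges $u\sigma(u)$ — which is exactly how the cited argument handles it, and which is harmless since mirroring preserves the crossing number.
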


Let $v\in V(G)$ be a vertex of degree $d$ in $G$. A \DEF{bundle} of $v$ is a set $B$ of $d$ edge disjoint paths from $v$ to some vertex $u\in V(G)$, $u\neq v$. Two bundles $B_1$ and $B_2$ of $v$ are \DEF{coherent} if the sets of edges $E(B_1)\cap E(G-v)$ and $E(B_2)\cap E(G-v)$ are disjoint. 

\begin{lemma}[\cite{CNCPT}]
\label{lm:lower}
For $i=1,2$, let $G_i$ be a graph, $v_i\in V(G_i)$ its simple vertex of degree $d$, and $N_i=N_{G_i}(v_i)$. Also assume that $v_i$ has two coherent bundles in $G_i$. Then, $\crn(G_1\odot_\sigma G_2)\ge\crn(G_1)+\crn(G_2)$ for any bijection $\dfnc{\sigma}{N_1}{N_2}$. 
\end{lemma}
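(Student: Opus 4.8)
The plan is to establish the lower bound $\crn(G_1\odot_\sigma G_2)\ge \crn(G_1)+\crn(G_2)$ by taking an optimal drawing $D$ of the zip product and extracting from it drawings of $G_1$ and $G_2$ whose crossing numbers add up to at most $\crn(D)$. The zip product replaces the two simple vertices $v_1,v_2$ (each of degree $d$) by a perfect matching $\{u\sigma(u): u\in N_1\}$ between the two neighborhoods. So in $D$ we see the graphs $G_1-v_1$ and $G_2-v_2$ together with these $d$ connecting edges. The goal is to ``cut'' the drawing along the matching edges, reinsert the vertices $v_1$ and $v_2$, and bound the crossings created in the process by the crossings destroyed.

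The key device is the pair of coherent bundles at each $v_i$. First I would fix an optimal drawing $D$ of $H:=G_1\odot_\sigma G_2$ and consider the $d$ matching edges as a \emph{transition set} separating the $D_1$-part (coming from $G_1-v_1$) from the $D_2$-part. The coherence hypothesis guarantees two bundles $B_1,B_2$ of edge-disjoint (off $v_i$) paths from $v_i$ to a common vertex $u_i$; the point of coherence is that these two bundles give $d$ edge-disjoint routes reaching the matching, which I want to use as a ``reference frame'' to reconstruct a closed curve capturing the rotation at $v_i$. Concretely, I would use the two coherent bundles in each $G_i$ to build, inside the drawing $D$, a closed curve $\gamma_i$ that meets the matching edges in the correct cyclic order and crosses the rest of the drawing no more than the relevant part of $D$ does. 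Contracting $\gamma_i$ to a point then recovers a drawing of $G_i$ with $v_i$ reinstated, and the crossings of $G_1$ and $G_2$ obtained this way partition (or are bounded by) the crossings of $D$.

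The heart of the argument, and where coherence does the real work, is a counting/charging step: each crossing of $D$ must be assigned to either the recovered drawing of $G_1$ or of $G_2$, and no crossing may be charged twice. A crossing between two edges both lying in $G_1-v_1$ is charged to $G_1$, and symmetrically for $G_2$; the delicate crossings are those involving the matching edges. Here I would argue that the two coherent bundles allow me to reroute each matching edge ``into'' $v_i$ along bundle paths in such a way that the new crossings introduced by reinsertion of $v_i$ are compensated by crossings on the matching edges already present in $D$. The disjointness of $E(B_1)\cap E(G-v)$ from $E(B_2)\cap E(G-v)$ is exactly what prevents double-counting: the two bundles provide independent certificates, so that a crossing on the cut cannot be blamed simultaneously on the $G_1$-side and the $G_2$-side reconstruction.

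The main obstacle I anticipate is the rotation-matching step: to reconstruct valid drawings of $G_1$ and $G_2$ one needs the cyclic order in which the matching edges leave the $G_1$-side to be compatible, after reinsertion of $v_1$, with a drawing of $G_1$, and likewise on the other side, without creating uncontrolled extra crossings among the $d$ matching edges themselves. Getting the bookkeeping right so that the total is $\le\crn(D)=\crn(H)$ rather than something larger is the crux; the coherent bundles are the tool that tames it, but verifying that every crossing is charged at most once across the cut will require a careful case analysis of which edges (bundle edges, matching edges, or ordinary edges) are involved in each crossing. Combining the resulting inequality $\crn(G_1)+\crn(G_2)\le\crn(H)$ with the upper bound from Lemma~\ref{lm:upper} would then yield equality, but for the present statement only the lower direction is needed.
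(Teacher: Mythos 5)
Your high-level strategy is the right one (and is the strategy of the proof in \cite{CNCPT}): fix an optimal drawing $D$ of $G_1\odot_\sigma G_2$ and extract from it drawings of $G_1$ and $G_2$ whose crossing numbers sum to at most $\crn(D)$. But the two steps you defer are exactly the places where your sketch, as written, does not work. First, the reconstruction device: you propose building from the bundles a closed curve $\gamma_i$ meeting the matching edges in the correct cyclic order and then ``contracting'' it to reinstate $v_i$. No such curve need exist -- the subdrawings of $G_1-v_1$ and $G_2-v_2$ can be arbitrarily intertwined in $D$, and contracting a region that is crossed by other edges is not a crossing-controlled operation. The actual mechanism is different and more concrete: to rebuild $G_1$, take a bundle $B$ of $v_2$ \emph{in the other graph} $G_2$, with hub $u_2$; place $v_1$ at the point of $D$ where $u_2$ is drawn, and route each edge $v_1u$, $u\in N_1$, alongside the walk consisting of the zip edge $u\sigma(u)$ followed by the path of $B-v_2$ from $\sigma(u)$ to $u_2$. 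These $d$ walks are edge-disjoint, and since the new star edges are pairwise adjacent at $v_1$, all crossings among them can be eliminated; hence the only crossings of the rebuilt drawing beyond those inside $G_1-v_1$ are crossings of $D$ between $E(G_1-v_1)$ and the zip/bundle edges. Note also that your ``main obstacle,'' matching the rotation at $v_i$, is a non-issue for this direction: \emph{any} cyclic order of the star at $v_1$ yields a legitimate drawing of $G_1$, so $\crn(G_1)\le\crn(D_1)$ regardless; rotation compatibility matters only for the upper bound of Lemma \ref{lm:upper}.

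Second, and more seriously, coherence by itself does not ``prevent double-counting,'' and with a fixed choice of one bundle per vertex the charging genuinely fails: if $G_1$ is rebuilt along a bundle $B$ of $v_2$ and $G_2$ is rebuilt along a bundle $A$ of $v_1$, then a crossing of $D$ between an edge of $A-v_1$ and an edge of $B-v_2$ is charged to \emph{both} reconstructed drawings, and the inequality $\crn(D_1)+\crn(D_2)\le\crn(D)$ can be violated. This is precisely why the hypothesis demands \emph{two} coherent bundles at each vertex: writing $A_1',A_2'$ (resp.\ $B_1',B_2'$) for the edge sets of the two bundles of $v_1$ (resp.\ $v_2$) off the zipped vertices, edge-disjointness implies that a crossing between an edge of $G_1-v_1$ and an edge of $G_2-v_2$ lies in $A_i'\times B_j'$ for at most one pair $(i,j)$, while for the complementary pair $(3-i,3-j)$ it is charged to neither side; an averaging argument over the four combinations $(A_i,B_j)$ then shows that some combination incurs no net overcharge, which is the step that closes the proof. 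Your sketch never identifies this selection step, and replaces it with the assertion that disjoint bundles are ``independent certificates,'' which in the fixed-bundle reading is false. So the proposal has the correct architecture but is missing the two ideas that make the bound true.
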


The following observations are useful in iterative applications of the zip product.
\begin{lemma}
\label{lm:semi}
Let $G_1$ and $G_2$ be disjoint graphs, $v_i\in V(G_i)$ simple, $\deg_{G_i}(v_i)=d$, and $G\in \GvvG$.
\begin{enumerate}[label=(\roman{*}), ref=(\roman{*})]
\item \label{it:semi}If $v_2$ has a bundle in $G_2$ and $v\in V(G_1)$ has $k$ pairwise coherent bundles in $G_1$, then $v$ has $k$ pairwise coherent bundles in $G$.
\item \label{it:conn} If, for $i=1,2$, the graph $G_i$ is $k_i$-connected, $k_i\ge 2$, then $G$ is $k$-connected for $k=\min(k_1,k_2)$.
\item \label{it:econn} If, for $i=1,2$, the graph $G_i$ is $k_i$-edge-connected, $k_i\ge 2$, then $G$ is $k$-edge-connected for $k=\min(k_1,k_2)$.
\end{enumerate}
\end{lemma}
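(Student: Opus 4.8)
The plan is to treat the three claims separately, all by analysing how the deletion of the two vertices $v_1$ and $v_2$ interacts with the $d$ zip edges, which form a perfect matching between $N_1\subseteq V(G_1-v_1)$ and $N_2\subseteq V(G_2-v_2)$. Throughout I write $A=V(G_1)\setminus\{v_1\}$ and $B=V(G_2)\setminus\{v_2\}$, so that $G[A]=G_1-v_1$, $G[B]=G_2-v_2$, and the only edges between $A$ and $B$ are the zip edges. A fact I use repeatedly is that $d\ge k$: since $v_i$ has degree $d$ and $G_i$ is $k_i$-(edge-)connected, $d\ge k_i\ge k$.

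For \ref{it:semi} I would repair each bundle of $v$ by routing through the bundle of $v_2$. Fix a bundle $C$ of $v_2$ in $G_2$; deleting the first edge of each of its $d$ paths yields pairwise edge-disjoint paths $Q_z$ ($z\in N_2$) from $z$ to a common vertex $w$ in $G_2-v_2$. Given one of the $k$ coherent bundles $B_j$ of $v$ in $G_1$, build $\hat B_j$ in $G$ as follows: paths avoiding $v_1$ are kept unchanged; a path traversing $v_1$ along $x\,v_1\,y$ is rerouted as $x\,\sigma(x)\,Q_{\sigma(x)}\,w\,\overline{Q_{\sigma(y)}}\,\sigma(y)\,y$ (shortcutting the resulting walk to a path); and if the bundle terminates at $v_1$ via $x\,v_1$, every such path is extended by $x\,\sigma(x)\,Q_{\sigma(x)}$ and $w$ is adopted as the new common endpoint. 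Since $v\ne v_1$, the edges at $v_1$ lie in $E(G_1-v)$, so coherence of $B_i,B_j$ forces them to use disjoint edge sets at $v_1$; hence the reroutes employ disjoint zip edges and disjoint (already edge-disjoint) paths $Q_z$, and this, together with the coherence present among the surviving $G_1-v$ edges, yields that $\hat B_1,\dots,\hat B_k$ are pairwise coherent bundles of $v$ in $G$. The care-point here is the bookkeeping of the common endpoint and checking that each splice, after shortcutting, is a genuine path retaining a distinct first edge at $v$.

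For \ref{it:conn} I would suppose $S\subseteq V(G)$ with $|S|<k$ disconnects $G$ and derive a contradiction, splitting on $S_1=S\cap A$, $S_2=S\cap B$. Two facts drive the argument: $G_i-v_i$ is $(k-1)$-connected (vertex deletion lowers connectivity by at most $1$), and at least $d-|S|\ge 1$ zip edges survive in $G-S$. If both $S_1,S_2$ are nonempty then each has size at most $k-2$, so $A-S_1$ and $B-S_2$ are each connected and a surviving zip edge joins them, whence $G-S$ is connected. The remaining case, $S\subseteq A$ (and symmetrically $S\subseteq B$), is the delicate one: here I would use that $B$ is connected and, in $G-S$, attaches to exactly $N_1\setminus S$, so contracting $B$ to a single vertex turns $G-S$ into $G_1-S$; since $|S|<k\le k_1$ the latter is connected, and as $B$ itself is connected, so is $G-S$. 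Finally $|V(G)|\ge k_1+k_2>k$.

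Claim \ref{it:econn} is proved in the same spirit but is the genuine obstacle. Taking an edge cut $F$ with $|F|<k$ and its $2$-partition $(X,Y)$, I would split $F$ into its $A$-internal, $B$-internal, and zip parts. Placing $v_1$ on whichever of $X,Y$ meets $N_1$ less converts $F$ into an edge cut of $G_1$, and symmetrically for $G_2$; combining the resulting bounds $\lambda(G_i)\ge k_i\ge k$ with the matching structure of the $d\ge k$ zip edges is meant to force $|F|\ge k$. The hard point — and where the edge version differs essentially from \ref{it:conn} — is that $G_i-v_i$ need not be connected, so the reconnection of the two sides cannot be taken for granted and must be extracted from the surviving zip edges; the case in which $F$ splits both $A$ and $B$ is the one demanding the most care, and it is precisely here that the degree condition $d\ge k$ and the bijectivity of $\sigma$ must be used in full.
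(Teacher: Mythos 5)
Your parts (i) and (ii) are correct. For (i) the paper gives no argument at all (it defers to the reference \cite{CNCPT}), so your rerouting construction — splicing the $v_1$-portions of each bundle through the zip edges and a fixed bundle of $v_2$, and using coherence at $v_1$ (legitimate since $v\ne v_1$, so edges at $v_1$ lie in $E(G_1-v)$) to keep the splices of different bundles edge-disjoint — is a sound, self-contained substitute. For (ii) your argument is essentially the paper's, reorganized: the paper shows $|S|\ge k$ by cases ($S$ inside one side; $S_i\cup\{v_i\}$ a separator of $G_i$; otherwise every zip edge must be covered by $S$, whence $|S|\ge d\ge k$), while you show directly that $G-S$ stays connected when $|S|<k$; your contraction of $B$ to recover $G_1-S$ in the one-sided case is a clean equivalent.

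Part (iii) is where you stop, and rightly so — but the situation is worse than a missing step: claim (iii) is \emph{false} as stated, so neither your sketch nor the paper's one-line ``the argument is similar to (ii)'' can be completed. The obstruction is exactly the asymmetry you sensed: converting an edge cut $(X,Y)$ of $G$ into an edge cut of $G_1$ requires assigning $v_1$ to one side, which adds $\min(|N_1\cap X|,|N_1\cap Y|)\le d/2$ edges to the cut, not the single vertex it costs in (ii); this loss is fatal once $d>k$. Concretely, let $G_1$ consist of two disjoint copies $P,Q$ of $K_{10}$, one edge $p_1q_1$ between them, and $v_1$ joined to three vertices of $P$ and three of $Q$; let $G_2$ be an isomorphic copy with $P',Q',p_1'q_1',v_2$. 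Each $G_i$ is $4$-edge-connected and $v_i$ is simple of degree $d=6$. Choose $\sigma$ mapping the $P$-neighbors of $v_1$ onto the $P'$-neighbors of $v_2$ and the $Q$-neighbors onto the $Q'$-neighbors. Then all six zip edges lie inside $P\cup P'$ or inside $Q\cup Q'$, so the partition $\left(P\cup P',\,Q\cup Q'\right)$ of the resulting $G\in\GvvG$ is crossed only by $p_1q_1$ and $p_1'q_1'$: an edge cut of size $2<4=\min(k_1,k_2)$. (This does not contradict (ii): those graphs are only $2$-connected, and $G$ is indeed $2$-connected.) Statement (iii) is salvageable in the regime $d=k$ — e.g.\ cubic vertices in $3$-edge-connected graphs, where the counting $|F|\ge (k-\min(a,d-a))+(k-\min(b,d-b))+|a-b|$ with $a=|N_1\cap X|$, $b=|N_2\cap X|$ does close — but not in general. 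Note that the paper itself only ever invokes parts (i) and (ii) (Theorem \ref{th:main} uses \ref{it:conn} for $3$-connectivity), so this flaw does not propagate to the main results.
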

\begin{proof}
\textbf{\ref{it:semi}}: See \cite{CNCPT}.

\textbf{\ref{it:conn}}: Let $S\subseteq V(G)$ be a separator of $G$. If $S\subseteq V(G_i-v_i)$, then, as $G_{3-i}-v_{3-i}$ is nonempty and $(k_{3-i}-1)$-connected, $S$ is a separator in $G_i$ and $|S|\ge k$. Let $S_i=S\cap V(G_i-v_i)$ and $S_i\neq \emptyset$ for $i=1,2$. If $S_i\cup \{v_i\}$ is a separator in $G_i$ for one of $i=1,2$, then $|S|\ge k$. Otherwise, the vertices of $G_i-v_i - S$ are all in the same component of $G-S$ for both $i=1,2$, thus $|S|\ge d\ge k$.

\textbf{\ref{it:econn}}: The argument is similar to (ii).
\end{proof}

Let $S\subset V(G)$ be a set and $\Gamma\subseteq \operatorname{Aut}(G)$ a group. We say that $S$ is \DEF{$\Gamma$-homogeneous} in $G$ if any permutation $\pi$ of $S$ can be extended to an automorphism $\sigma\in\Gamma$. For $S\subseteq V(G)$, let $\Gamma(S)$ be the pointwise stabilizer of $S$ in $\operatorname{Aut}(G)$.  We say that a vertex $v\in V(G)$ has a \DEF{homogeneous neighborhood} in $G$ if $N_G(v)$ is $\Gamma(\{v\})$-homogeneous in $G$. 

If all the vertices in $N_G(v)$ have the same set of neighbors for a vertex $v\in V(G)$, then $v$ has a homogeneous neighborhood $G$. Thus, every vertex of a complete or complete bipartite graph $K$ has such neighborhood in $K$.

\begin{lemma}
\label{lm:homoupper}
For $i=1,2$, let $G_i$ be a graph with a simple vertex $v_i\in V(G_i)$ of degree $d$. If $d=3$ or $v_2$ has a homogeneous neighborhood in $G_2$, then $\crn(G)\le\crn(G_1)+\crn(G_2)$ for every $G\in\GvvG$.
\end{lemma}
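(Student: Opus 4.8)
The plan is to reduce everything to Lemma \ref{lm:upper}, which already delivers the bound $\crn(\GsG)\le\crn(G_1)+\crn(G_2)$ as soon as the gluing bijection is the zip function $\sigma=\pi_2^{-1}\pi_1$ induced by a pair of optimal drawings $D_1,D_2$. Since $G\in\GvvG$ means $G=\GsG$ for some fixed bijection $\dfnc{\sigma}{N_1}{N_2}$, and crossing number is an isomorphism invariant, it suffices to produce \emph{optimal} drawings $D_1'$ of $G_1$ and $D_2'$ of $G_2$ whose induced zip function equals this particular $\sigma$; then Lemma \ref{lm:upper} finishes the argument. Throughout I exploit two cost-free freedoms: each labeling $\pi_i$ is defined only up to a cyclic rotation of $\{1,\ldots,d\}$, and replacing a drawing by its mirror image preserves optimality while reversing the cyclic order of the edges at $v_i$.

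For the case $d=3$, I would start from arbitrary optimal drawings $D_1,D_2$ and let $\sigma_0=\pi_2^{-1}\pi_1$ be the induced zip function. The cyclic ambiguity in $\pi_2$ lets me compose $\sigma_0$ with any of the three cyclic rotations of $N_2$, producing the three bijections that carry the cyclic order of $N_1$ to that of $N_2$; passing to the mirror image of $D_2$ reverses the cyclic order of $N_2$ and supplies the remaining three bijections. A three-element boundary configuration admits exactly $3!=6$ bijections, and $2\cdot 3=6$, so every bijection arises this way. In particular the prescribed $\sigma$ is realized by $D_1$ together with either $D_2$ or its reflection (and a suitable choice of rotation), and Lemma \ref{lm:upper} applies. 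This also explains why $d=3$ is special: for $d\ge 4$ the $2d$ reflections-and-rotations fall short of the $d!$ bijections.

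For the case in which $v_2$ has a homogeneous neighborhood, I would again fix optimal $D_1,D_2$ with induced zip function $\sigma_0=\pi_2^{-1}\pi_1$ and set $\tau=\sigma\sigma_0^{-1}$, a permutation of $N_2$. By homogeneity, $\tau$ extends to an automorphism $\alpha\in\Gamma(\{v_2\})$ of $G_2$ fixing $v_2$. Since $\alpha$ is only a graph automorphism and need not be realizable by a homeomorphism of the plane, I define $D_2'$ not by moving points but by relabeling the picture $D_2$: vertex $w$ is drawn at the location previously occupied by $\alpha^{-1}(w)$. Then $D_2'$ is an optimal drawing of $G_2$ with the same crossings, and because $\alpha$ fixes $v_2$ its rotation at $v_2$ transforms the labeling as $\pi_2'=\pi_2\alpha^{-1}$. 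Consequently the zip function of $D_1$ and $D_2'$ is $(\pi_2')^{-1}\pi_1=\alpha\,\pi_2^{-1}\pi_1=\alpha\sigma_0$, which on $N_1$ equals $\tau\sigma_0=\sigma$, so Lemma \ref{lm:upper} again yields the bound.

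The main obstacle I anticipate is the careful bookkeeping of how the rotation system at $v_2$, equivalently the labeling $\pi_2$, transforms under a reflection or under the automorphism $\alpha$, together with the ever-present cyclic-rotation ambiguity of the $\pi_i$. Once the identity $\pi_2'=\pi_2\alpha^{-1}$ (and its reflection analogue in the $d=3$ case) is pinned down precisely, both cases collapse to a single invocation of Lemma \ref{lm:upper}; the rest is routine verification that relabeling and reflection preserve optimality.
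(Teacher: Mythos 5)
Your proposal is correct and follows essentially the same route as the paper's proof: both fix optimal drawings $D_1,D_2$ and reduce to Lemma \ref{lm:upper}, handling $d=3$ via rotations and the mirror image of $D_2$, and the homogeneous case by extending the required permutation of $N_{G_2}(v_2)$ to an automorphism fixing $v_2$ and applying it to $D_2$ as a crossing-preserving relabeling. Your bookkeeping (the identity $\pi_2'=\pi_2\alpha^{-1}$, giving zip function $\alpha\sigma_0=\sigma$) is, if anything, slightly more careful than the paper's own computation, but the underlying idea is identical.
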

\begin{proof}
Assume $N_1=N_{G_1}(v_1)$, $N_2=N_{G_2}(v_2)$, and let the zip function of $G$ be $\dfnc{\sigma}{N_1}{N_2}$. For $i=1,2$, let $D_i$ be an optimal drawing of $G_i$ and let $\dfnc{\pi_i}{N_i}{N_i}$ denote the vertex rotation around $v_i$ in $D_i$. For $d>3$, there exists an automorphism $\rho\in\Gamma_{G_2}(\{v_2\})$ with $\rho/N_2=\sigma\pi_1\sigma^{-1}\pi_2^{-1}$. Applying $\rho$ to $D_2$ produces a drawing $D_2'$ with vertex rotation $\rho\pi_2=\sigma\pi_1\sigma^{-1}$ around $v_2$. Since $\sigma^{-1}(\rho\pi_2)^{-1}\sigma\pi_1=\id$, $\sigma$ is a zip function of $D_1$ and $D_2'$. If $d=3$, then $\sigma$ is a zip fucntion of $D_1$ and either $D_2$ or its mirrored image $D_2'$. The claim follows by Lemma \ref{lm:upper}.
\end{proof}

\begin{lemma}
\label{lm:homoextends}
For $i=1,2$, let $G_i$ be a graph with a simple vertex $v_i\in V(G_i)$ of degree $d$. Assume that $v_2$ has a homogeneous neighborhood in $G_2$. If, for some vertex $v\in G_1$, $v\ne v_1$, its neighborhood $N=N_{G_1}(v)$ is $\Gamma_{G_1}(\{v,v_1\})$-homogeneous, then $v$ has a homogeneous neighborhood in $G\in\GvvG$.
\end{lemma}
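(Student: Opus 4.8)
The plan is to realize every permutation of $N_G(v)$ by an automorphism of $G$ fixing $v$, obtained by lifting a suitable automorphism of $G_1$ across the zip and matching it on the $G_2$-side via the homogeneity of $N_2$. First I would pin down $N_G(v)$. If $\deg_{G_1}(v)\le 1$ the claim is immediate, since the only permutation of a set of size at most one is the identity, realized by $\id_G$; so assume $\deg_{G_1}(v)\ge 2$. I then claim $v_1\notin N$: otherwise, choosing any $w\in N\setminus\{v_1\}$, the transposition exchanging $v_1$ and $w$ is a permutation of $N$ that cannot be extended to any automorphism in $\Gamma_{G_1}(\{v,v_1\})$, every element of which fixes $v_1$, contradicting the $\Gamma_{G_1}(\{v,v_1\})$-homogeneity of $N$. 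Hence $v$ is nonadjacent to $v_1$, and since the zip only deletes $v_1,v_2$ and inserts edges between $N_1$ and $N_2$, the neighbourhood of $v$ is unchanged: $N_G(v)=N\subseteq V(G_1-v_1)$.

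The core step is a lifting principle across the zip. Write $G=\GsG$ for the zip function $\sigma\colon N_1\to N_2$ realizing the chosen $G\in\GvvG$, and view $V(G)$ as the disjoint union of $V(G_1-v_1)$ and $V(G_2-v_2)$. Given any $\alpha\in\operatorname{Aut}(G_1)$ with $\alpha(v_1)=v_1$, its restriction $\alpha|_{N_1}$ is a permutation of $N_1$, so $\tau:=\sigma\,(\alpha|_{N_1})\,\sigma^{-1}$ is a permutation of $N_2$. By homogeneity of $N_2$ in $G_2$ there is $\beta\in\Gamma_{G_2}(\{v_2\})$ with $\beta|_{N_2}=\tau$. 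I would then define $\phi$ on $V(G)$ to act as $\alpha$ on $V(G_1-v_1)$ and as $\beta$ on $V(G_2-v_2)$, and verify $\phi\in\operatorname{Aut}(G)$: edges inside $G_1-v_1$ and inside $G_2-v_2$ are preserved because $\alpha,\beta$ fix $v_1,v_2$ and hence restrict to automorphisms of $G_1-v_1$ and $G_2-v_2$; and a zip edge $u\,\sigma(u)$ with $u\in N_1$ is sent to $\alpha(u)\,\beta(\sigma(u))=\alpha(u)\,\sigma(\alpha(u))$, again a zip edge, with $u\mapsto\alpha(u)$ running bijectively over $N_1$. Crucially, $\phi$ agrees with $\alpha$ on $V(G_1-v_1)\ni v$.

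Finally, for an arbitrary permutation $\pi$ of $N_G(v)=N$, the $\Gamma_{G_1}(\{v,v_1\})$-homogeneity of $N$ supplies $\alpha\in\Gamma_{G_1}(\{v,v_1\})$ with $\alpha|_N=\pi$; in particular $\alpha(v_1)=v_1$ and $\alpha(v)=v$. Lifting $\alpha$ to $\phi$ as above gives $\phi\in\operatorname{Aut}(G)$ with $\phi(v)=\alpha(v)=v$ and $\phi|_N=\pi$, so $\phi\in\Gamma_G(\{v\})$ extends $\pi$; thus $N_G(v)$ is $\Gamma_G(\{v\})$-homogeneous and $v$ has a homogeneous neighbourhood in $G$. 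The step I expect to require the most care is the lifting principle—checking that the conjugate $\tau=\sigma\,(\alpha|_{N_1})\,\sigma^{-1}$ is precisely the permutation of $N_2$ making $\phi$ respect the zip edges, so that the homogeneity hypothesis on $N_2$ is exactly what realizes $\tau$ by a genuine automorphism of $G_2$ fixing $v_2$. The only other subtlety is the opening observation that the hypothesis already forces $v_1\notin N$, which is what lets us identify $N_G(v)$ with $N$.
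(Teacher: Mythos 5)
Your proposal is correct and follows essentially the same argument as the paper: given a permutation $\pi$ of $N$, take $\alpha\in\Gamma_{G_1}(\{v,v_1\})$ extending it, conjugate its restriction to $N_1$ by the zip function $\sigma$ to obtain a permutation of $N_2$, realize that by an automorphism of $G_2$ fixing $v_2$ via the homogeneity of $N_2$, and glue the two pieces into an automorphism of $G$ fixing $v$. Your additional checks (the trivial case $|N|\le 1$, the observation that the hypothesis forces $v_1\notin N$ so that $N_G(v)=N$, and the explicit verification on zip edges) are details the paper leaves implicit, not a different route.
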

\begin{proof}
Assume $N_1=N_{G_1}(v_1)$, $N_2=N_{G_2}(v_2)$, and let the zip function of $G$ be $\dfnc{\sigma}{N_1}{N_2}$. For a permutation $\pi$ of $N$, there exists $\sigma_1\in\Gamma_{G_1}(\{v,v_1\})$, such that $\sigma_1/N=\pi$. Let $\pi_1=\sigma_1/N_1$, and set $\pi_2=\sigma\pi_1\sigma^{-1}$. As $v_2$ has a homogeneous neighborhood, there exists an automorphism $\sigma_2\in\Gamma_{G_2}(v_2)$ with $\sigma_2/N_2=\pi_2$. It is easy to verify that a function $\dfnc{\Phi}{G}{G}$ with $\Phi/(G_i-v_i)=\sigma_i/(G_i-v_i)$ for $i=1,2$, is an automorphism of $\Gamma_{G}(v)$, for which $\Phi/N=\pi$. Thus, $v$ has a homogeneous neighborhood in $G$. 
\end{proof}

\begin{theorem}
\label{th:crithomo}
For $i=1,2$, let $G_i$ be a $k_i$-crossing-critical graph with a simple vertex $v_i\in V(G_i)$ of degree $d$. If $d\neq 3$, then let $v_i$ have a homogeneous neighborhood. If $\crn(G)\ge k$ for  $k=\max\dset{\crn(G_i)+k_{3-i}}{i\in\{1,2\}}$ and $G\in\GvvG$, then $G$ is $k$-crossing-critical.
\end{theorem}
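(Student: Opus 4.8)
The plan is to establish criticality directly from the definition: since $\crn(G)\ge k$ is given by hypothesis, it remains only to show $\crn(G-e)<k$ for every edge $e\in E(G)$. The edge set of $G$ partitions into three classes—edges lying in $G_1-v_1$, edges lying in $G_2-v_2$, and the $d$ zip edges $u\sigma(u)$ with $u\in N_1$—so I would treat these three cases separately. Throughout, the engine is the upper bound of Lemma \ref{lm:homoupper}, combined with the key observation that deleting an edge from $G$ realizes $G-e$ as a zip product of appropriately modified factors.

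For an edge $e$ of $G_1-v_1$, note that $v_1$ is still a simple vertex of degree $d$ in $G_1-e$ and that $G-e=(G_1-e)\odot_\sigma G_2$ for the same zip function. Since $v_2$ (when $d\neq 3$) has a homogeneous neighborhood, Lemma \ref{lm:homoupper} gives $\crn(G-e)\le\crn(G_1-e)+\crn(G_2)$; as $G_1$ is $k_1$-crossing-critical, $\crn(G_1-e)\le k_1-1$, whence $\crn(G-e)\le(\crn(G_2)+k_1)-1\le k-1<k$. The case $e\in E(G_2-v_2)$ is symmetric, using the homogeneous neighborhood of $v_1$ and the bound $\crn(G-e)\le\crn(G_1)+\crn(G_2-e)\le(\crn(G_1)+k_2)-1<k$.

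The interesting case is a zip edge $e=u\sigma(u)$, and here the main structural observation is that $G-e=(G_1-v_1u)\odot_{\sigma'}(G_2-v_2\sigma(u))$, where $\sigma'$ is the restriction of $\sigma$ to $N_1\setminus\{u\}$ and the two factors are now zipped at the vertices $v_1$ and $v_2$, whose degree has dropped to $d-1$. Indeed, deleting the edge $v_1u$ (resp.\ $v_2\sigma(u)$) and then the vertex $v_1$ (resp.\ $v_2$) recovers $G_1-v_1$ (resp.\ $G_2-v_2$), and the surviving zip edges are precisely $\{u'\sigma(u'):u'\in N_1\setminus\{u\}\}$. To apply Lemma \ref{lm:homoupper} at degree $d-1$ I would check that its hypotheses persist: $v_2$ remains simple, and for $d\neq 3$ its homogeneous neighborhood in $G_2$ transfers to $G_2-v_2\sigma(u)$, since any permutation of $N_2\setminus\{\sigma(u)\}$ extends by fixing $\sigma(u)$ to an automorphism in $\Gamma_{G_2}(\{v_2\})$, which then also fixes the edge $v_2\sigma(u)$ and so restricts to $G_2-v_2\sigma(u)$. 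When $d=3$ the factors are zipped at degree two, where the cyclic rotation of two edges is symmetric, so every bijection of the two neighborhoods is a zip function of any pair of optimal drawings and the upper bound applies a fortiori. In either situation this yields $\crn(G-e)\le\crn(G_1-v_1u)+\crn(G_2-v_2\sigma(u))\le(k_1-1)+(k_2-1)=k_1+k_2-2<k$, the final inequality holding because $k\ge\crn(G_1)+k_2\ge k_1+k_2$.

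Combining the three cases shows $\crn(G-e)<k$ for every $e\in E(G)$, which together with the hypothesis $\crn(G)\ge k$ proves that $G$ is $k$-crossing-critical. The only delicate point I anticipate is the zip-edge case: one must correctly identify $G-e$ as a zip product of the edge-deleted factors at the lowered degree $d-1$ and verify that the homogeneity needed by Lemma \ref{lm:homoupper} (or, for $d=3$, the degree-two triviality) survives the deletion. The degree bookkeeping and the extra slack of $2$ over $k_1+k_2$ in the zip-edge bound are exactly what force all three estimates strictly below $k$.
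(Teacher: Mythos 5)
Your proof is correct and takes essentially the same route as the paper's: the identical three-way case split (edges of $G_1-v_1$, edges of $G_2-v_2$, and zip edges $u\sigma(u)$), the same reliance on Lemma \ref{lm:homoupper} for the first two cases, and the same treatment of a zip edge by combining optimal drawings of $G_1-v_1u$ and $G_2-v_2\sigma(u)$ via an automorphism in $\Gamma_{G_2}(\{v_2\})$ that fixes $\sigma(u)$, exactly as the paper constructs its $\rho$. Your additional bookkeeping (the bound $k_1+k_2-2<k$ and the explicit check that homogeneity descends to $G_2-v_2\sigma(u)$) merely makes explicit what the paper leaves implicit.
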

\begin{proof}
Again, assume $N_1=N_{G_1}(v_1)$, $N_2=N_{G_2}(v_2)$, and let the zip function of $G$ be $\dfnc{\sigma}{N_1}{N_2}$. Let $e\in E(G)$, and assume $e\in E(G_1-v_1)$. Let $D_1$ be an optimal drawing of $G_1-e$ and $D_2$ an optimal drawing of $G_2$. We adjust $D_2$ either using an appropriate automorphism in $\Gamma_{G_2}(\{v_2\})$ for $d>3$ or mirroring for $d=3$ similarly as in the proof of Lemma \ref{lm:homoupper} and combine $D_2$ with $D_1$ to produce  a drawing of $G-e$ with at most $k$ crossings. Similar arguments apply for $e\in E(G_2-v_2)$. 

If $e=v\sigma(v)$ for $v\in N_1$, let $D_1$ be an optimal drawing of $G_1-vv_1$ and $D_2$ an optimal drawing of $G_2-v_2\sigma(v)$. If $d=3$, we can clearly combine $D_1$ and $D_2$ into a drawing of $G$ with at most $k$ crossings. Otherwise, let $\dfnc{\pi_i}{N_i}{N_i}$ be the vertex rotation around $v_i$ in $D_i$ and $\rho\in\Gamma_{G_2}(\{v_2\})$ an automorphism of $G_2$ with $\rho/(N_2\setminus\{\sigma(v)\})=\sigma\pi_1\sigma^{-1}\pi_2^{-1}$. The vertices of $N_2$ can be rearranged with $\rho$ as in the proof of Lemma \ref{lm:homoupper}, thus $G-e$ can be drawn with at most $k_1+k_2$ crossings. 
\end{proof}

Lemma \ref{lm:lower} states that two coherent bundles at each $v_i$ are a sufficient condition for $\crn(G)\ge k$ in Theorem \ref{th:crithomo}. 

Argument of Theorem \ref{th:crithomo} has a generalization to (not necessarily critical) graphs that have a special vertex cover. Let $G$ be a graph and $S=\lset{v_1}{v_t}\subseteq V(G)$. For each $v_i\in S$, let $G_i$ be a graph and let $u_i\in V(G_i)$ be a simple vertex of degree $d(u_i)=d(v_i)$ having two coherent bundles in $G_i$. Let $\cS:=\dset{(v_i,G_i)}{i\in\lset{1}{t}}$. The family $G^\cS:=\Gamma^t$ is defined inductively as follows: $\Gamma^0=\{G\}$, and, for $i=1,\ldots,t$, let $\Gamma^i:=\bigcup_{H\in\Gamma^{i-1}}\vzip{H}{v_i}{u_i}{G_i}$. Further, let $\cS_i:=\cS\setminus \{(v_i,G_i)\}$.

\begin{theorem}
\label{th:crittos}
Let $G$ be a graph, $S$ be its vertex cover consisting of simple vertices of degree three each having two coherent bundles, and $\cS$ be defined as above. If each graph $G_i$ is $k_i$-crossing-critical for $i=1,\ldots,t$, then every $\bar G\in G^\cS$ is $k$-crossing-critical for $k=\max\dset{\crn(\bar G)-\crn(G_i)+k_i}{i\in\lset{1}{t}}$ and has crossing number $\crn(\bar G)=\crn(G)+\sum_{i=1}^t\crn(G_i)$.
\end{theorem}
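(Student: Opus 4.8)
The plan is to prove both assertions simultaneously by induction on $t$, the number of vertices in the cover $S$, using the single-zip results already established. The base case $t=0$ is trivial since $G^\cS=\{G\}$ and the crossing-number formula reads $\crn(G)=\crn(G)$. For the inductive step, I would peel off one vertex: fix $v_t\in S$ with its associated $G_t$, and observe that every $\bar G\in G^\cS$ arises as a zip product $\bar G\in\vzip{H}{v_t}{u_t}{G_t}$ for some $H\in G^{\cS_t}$, where $\cS_t=\cS\setminus\{(v_t,G_t)\}$ collects the remaining $t-1$ pairs. The inductive hypothesis applied to $G$ with cover $S\setminus\{v_t\}$ (still a valid vertex cover of simple degree-three vertices with two coherent bundles) tells me that $H$ is $k_H$-crossing-critical for the appropriate maximum and that $\crn(H)=\crn(G)+\sum_{i=1}^{t-1}\crn(G_i)$.

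The crux is then a single application of Theorem \ref{th:crithomo} to the pair $(H,G_t)$ at the vertices $(v_t,u_t)$. I need to check its hypotheses. Since $d=3$, the degree-three branch of Theorem \ref{th:crithomo} applies and I need no homogeneity assumption. The lower bound $\crn(\bar G)\ge k_H+k_t$-style inequality required by that theorem is supplied by Lemma \ref{lm:lower}, for which I must exhibit two coherent bundles at each of $v_t$ (in $H$) and $u_t$ (in $G_t$). The bundles at $u_t$ are given by hypothesis. For the bundles at $v_t$ in $H$, I invoke Lemma \ref{lm:semi}\ref{it:semi}: $v_t$ has two coherent bundles in $G$ by hypothesis, and these are preserved through each of the $t-1$ earlier zip operations (each $G_i$ supplies a bundle at its vertex $u_i$, which is the condition that lemma demands), so $v_t$ retains two coherent bundles in $H$. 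This yields $\crn(\bar G)=\crn(H)+\crn(G_t)$ by Lemmas \ref{lm:upper} and \ref{lm:lower}, which combines with the inductive formula for $\crn(H)$ to give $\crn(\bar G)=\crn(G)+\sum_{i=1}^{t}\crn(G_i)$, establishing the crossing-number claim.

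With the crossing-number equality in hand, Theorem \ref{th:crithomo} immediately delivers that $\bar G$ is $k'$-crossing-critical, where $k'=\max\{\crn(H)+k_t,\,\crn(G_t)+k_H\}$. The remaining task is purely arithmetic: I must verify that this $k'$ agrees with the claimed $k=\max_i\{\crn(\bar G)-\crn(G_i)+k_i\}$. Using $\crn(\bar G)=\crn(H)+\crn(G_t)$ one sees $\crn(H)+k_t=\crn(\bar G)-\crn(G_t)+k_t$, which is the $i=t$ term, and $\crn(G_t)+k_H=\crn(\bar G)-\crn(H)+k_H$; expanding $k_H$ via the inductive hypothesis as a maximum over $i<t$ of $\crn(H)-\crn(G_i)+k_i$ and substituting $\crn(\bar G)=\crn(H)+\crn(G_t)$ shows this collapses to $\max_{i<t}\{\crn(\bar G)-\crn(G_i)+k_i\}$. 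Taking the overall maximum reproduces exactly the stated $k$.

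I expect the main obstacle to be bookkeeping rather than conceptual. The delicate point is the propagation of coherent bundles through the iterated zip in Lemma \ref{lm:semi}\ref{it:semi}: I must be careful that $v_t$ is never itself the vertex being zipped away in the earlier steps (it is not, since distinct cover vertices are used at distinct stages), and that each intermediate $G_i$ genuinely furnishes the single bundle at $u_i$ required to invoke that lemma, which follows because each $u_i$ has \emph{two} coherent bundles by hypothesis, hence in particular one. The other place demanding care is confirming that, at each stage, $v_t$ remains a simple vertex of degree three in $H$ — its incident edges are untouched by zips performed at other cover vertices, and since $S$ is a vertex cover, no edge of $G$ joins two cover vertices, so zipping at $v_i$ does not disturb the neighborhood structure at $v_t$.
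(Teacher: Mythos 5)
Your inductive setup has a fatal flaw at its very first move: the claim that $S\setminus\{v_t\}$ is ``still a valid vertex cover'' of $G$ is false in general. Removing a vertex from a cover leaves uncovered every edge whose only endpoint in $S$ was that vertex. (You repeat the same confusion later when you assert that, since $S$ is a vertex cover, ``no edge of $G$ joins two cover vertices''; that property would make $S$ an \emph{independent} vertex cover, which exists only in bipartite graphs.) This is not a bookkeeping issue, because the vertex-cover hypothesis is exactly what makes criticality true: it guarantees that every edge of $\bar G$ either lies in some $G_j-u_j$ or is a zip edge created at some $v_j$. Once that hypothesis fails for $S\setminus\{v_t\}$, the intermediate graph $H\in G^{\cS_t}$ need not be crossing-critical at all: an edge of $G$ whose only cover vertex is $v_t$ survives unchanged in $H$, and deleting it need not lower $\crn(H)$. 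Indeed, the paper's own Figure \ref{fg:manybundles} exhibits a graph that has such a cover yet is not crossing-critical, so the graphs at the early stages of your induction (where few or none of the $G_i$ have been attached) are generally not critical for any $k$. Consequently Theorem \ref{th:crithomo}, which requires \emph{both} factors of the zip product to be crossing-critical, cannot be applied to the pair $(H,G_t)$, and the inductive step collapses. Criticality emerges only after \emph{all} cover vertices have been zipped; it cannot be carried as an inductive invariant along the way.

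The parts of your plan that do not depend on criticality of $H$ are sound and match the paper: the identity $\crn(\bar G)=\crn(G)+\sum_{i=1}^t\crn(G_i)$ does follow by iterating Lemmas \ref{lm:lower}, \ref{lm:semi} \ref{it:semi}, and \ref{lm:homoupper}, and your bundle-propagation argument for this is correct (Lemma \ref{lm:semi} \ref{it:semi} applies whether or not cover vertices are adjacent, so your worry there was unnecessary). But for criticality the paper argues directly on $\bar G$, not by induction: given an edge $e$ of $\bar G$, either $e\in E(G_j-u_j)$ for some $j$, in which case one combines an optimal drawing of $\bar G_j\in G^{\cS_j}$ with an optimal drawing of $G_j-e$; or, precisely because $S$ covers every edge of $G$, the edge $e$ is a zip edge at some $v_j$, joining a neighbor $y$ of $v_j$ in $\bar G_j$ to a neighbor $x$ of $u_j$ in $G_j$, and one combines optimal drawings of $\bar G_j-v_jy$ and of $G_j-u_jx$. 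In both cases the criticality of $G_j$ alone yields a drawing of $\bar G-e$ with fewer than $k$ crossings. If you wish to salvage an induction, the invariant you can carry is the crossing-number formula together with preservation of bundles and of the remaining cover vertices; the criticality statement itself must be proved in one shot at the end, along the lines just described.
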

\begin{proof}
Iterative application of Lemmas \ref{lm:lower}, \ref{lm:semi} \ref{it:semi}, and \ref{lm:homoupper} implies $\crn(\bar G)=\crn(G)+\sum_{i=1}^t\crn(G_i)$. To establish criticality of $\bar G$, let $e\in E(G^\cS)$ be an arbitrary edge and let $\bar G_j\in G^{\cS_j}$, $j=1,\ldots,t$, be the graph, such that $\bar G\in\vzip{\bar G_j}{v_j}{u_j}{G_j}$. 

Case 1: Assume $e\in E(G_j-v_j)$ for some $j\in\lset{1}{t}$. Let $D_1$ be an optimal drawing of $\bar G_j$ with $v_j$ in the infinite face and let $D_2$ be an optimal drawing of $G_j-e$ with $u_j$ in the infinite face. We can combine $D_1-v_j$ and $D_2-u_j$ into a drawing $D$ of $G-e$. By Lemma \ref{lm:homoupper}, $D$ has at most $\crn(G)+k_j+\sum_{i\neq j}\crn(G_i)\le k$ crossings.

Case 2: Assume $e\not\in E(G_i-v_i)$ for any $i\in\lset{1}{t}$. As $S$ is a vertex cover in $G$, there exists $j\in\lset{1}{t}$, such that $e$ connects some neighbor $x$ of $u_j\in V(G_j)$ with some neighbor $y$ of $v_j$ in $\bar G_j$. Let $e_1=v_jy\in E(\bar G_j)$, $e_2=u_jx\in E(G_j)$, and let $D_1$ be an optimal drawing of $\bar G_j-e_1$ with $v_j$ on the infinite face and $D_2$ an optimal drawing of $G_j-e_2$ with $u_j$ in the infinite face. We can combine $D_1-v_j$ and $D_2-u_j$ into a drawing $D$ of $G-e$. By Lemma \ref{lm:homoupper}, $D$ has at most $\crn(G-e)+k_j+\sum_{i\neq j}\crn(G_i)\le k$ crossings.
\end{proof}

\begin{figure}
\begin{center}
\includegraphics[height=44mm]{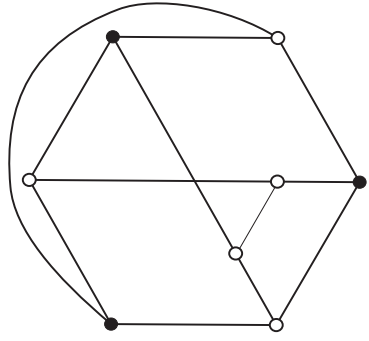}
\caption{Graph with a vertex cover of cubic vertices each having two coherent bundles.}
\label{fg:manybundles}
\end{center}
\end{figure}

Lea\~{n}os and Salazar established a decomposition of $2$-connected crossing-critical graphs into smaller $3$-connected crossing-critical graphs in \cite{LS}. Theorem \ref{th:crittos}, in combination with the graph in Figure \ref{fg:manybundles}, which has a vertex cover consisting of cubic vertices with two coherent bundles but is not crossing-critical, suggests that a similar decomposition does not exist for $3$-connected crossing-critical graphs.

For $d,d'\ge 3$, let $K_{d,d'}$ be a properly 2-colored complete bipartite graph: vertices of degree $d$ are colored black and vertices of degree $d'$ are colored white. For $p\ge 1$, let the family $\cR(d,d',p)$ consist of graphs with 2-colored vertices, obtained as follows: $\cR(d,d',1)=\{K_{d,d'}\}$ and $\cR(d,d',p)=\bigcup_{G\in \cR(d,d',p-1)}\vzip{G}{v_1}{v_2}{K_{d,d'}}$, where $v_1$ (respectively, $v_2$) is a black vertex in $G$ ($K_{d,d'}$). If $d=d'=3$, we allow $v_i$ to be any vertex. We preserve the colors of vertices in the zip product, thus the graphs in $\cR(d,d',p)$ are not properly colored for $p\ge 2$.

\begin{proposition}
\label{pr:graphsR}
Let $d, d'\ge 3$. Then every graph $G\in \cR(d,d',p)$ is a simple $3$-connected crossing-critical graph with $\crn(G)=p\crn(K_{d,d'})$.
\end{proposition}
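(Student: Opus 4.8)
The plan is to induct on $p$, carrying a strengthened hypothesis. For every $G\in\cR(d,d',p)$ I would establish simultaneously that (a) $G$ is simple and $3$-connected; (b) $\crn(G)=p\crn(K_{d,d'})$ and $G$ is $p\crn(K_{d,d'})$-crossing-critical; and the two auxiliary invariants (c) every black vertex of $G$ is a simple vertex of degree $d$ carrying two coherent bundles, and (d) for every black vertex $v$ and every set $S$ of black vertices with $v\notin S$, the neighbourhood $N_G(v)$ is $\Gamma_G(\{v\}\cup S)$-homogeneous. Invariants (c) and (d) are exactly what is needed to feed the next zip: (c) supplies the two coherent bundles required both by Lemma~\ref{lm:lower} (hence the lower bound in Theorem~\ref{th:crithomo}) and by the preservation statement Lemma~\ref{lm:semi}\ref{it:semi}, while (d) supplies the homogeneous neighbourhood required by Lemma~\ref{lm:homoupper} and Theorem~\ref{th:crithomo} when $d\neq3$.

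Base case $p=1$, $G=K_{d,d'}$. Simplicity and $\min(d,d')$-connectivity, hence $3$-connectivity, are immediate. That $K_{d,d'}$ is $\crn(K_{d,d'})$-crossing-critical follows from its edge-transitivity: since $\crn(K_{d,d'})\ge\crn(K_{3,3})=1$, an optimal drawing has a crossed edge whose deletion strictly lowers the crossing number, and edge-transitivity propagates this to every edge. For (c), a black vertex $v$ together with the $d$ two-edge paths through its white neighbours to a fixed black vertex $u\neq v$ forms a bundle, and choosing two distinct targets $u,u'$ (possible as there are $d'\ge3$ black vertices) yields two coherent bundles. For (d), an automorphism fixing a set of black vertices may still permute the white vertices arbitrarily, so $N_{K_{d,d'}}(v)$, the entire white part, is $\Gamma_{K_{d,d'}}(\{v\}\cup S)$-homogeneous.

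Inductive step. Write $G=\vzip{G'}{v_1}{v_2}{K_{d,d'}}$ with $G'\in\cR(d,d',p-1)$ and $v_1\in V(G')$, $v_2\in V(K_{d,d'})$ black. Simplicity is immediate, since the new edges $u\sigma(u)$ run between $V(G'-v_1)$ and $V(K_{d,d'}-v_2)$, hence are pairwise distinct and duplicate no old edge; $3$-connectivity follows from Lemma~\ref{lm:semi}\ref{it:conn}. For the crossing number, the upper bound $\crn(G)\le\crn(G')+\crn(K_{d,d'})$ comes from Lemma~\ref{lm:homoupper} (invoking the homogeneous neighbourhood of $v_2$ when $d\neq3$), and the matching lower bound from Lemma~\ref{lm:lower}, whose hypotheses hold by invariant (c) at $v_1$ and at $v_2$; thus $\crn(G)=p\crn(K_{d,d'})$. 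Criticality then follows from Theorem~\ref{th:crithomo} with $G_1=G'$, $G_2=K_{d,d'}$, $k_1=(p-1)\crn(K_{d,d'})$, $k_2=\crn(K_{d,d'})$: here $k=\max\{\crn(G_1)+k_2,\crn(G_2)+k_1\}=p\crn(K_{d,d'})$, the hypothesis $\crn(G)\ge k$ is the lower bound just obtained, and the homogeneity hypotheses hold by invariant (d), being vacuous when $d=3$. Finally I would restore (c) by Lemma~\ref{lm:semi}\ref{it:semi} (using that both $v_1$ and $v_2$ carry bundles) and restore (d) by the automorphism-splicing argument of Lemma~\ref{lm:homoextends}.

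The step I expect to be the main obstacle is the propagation of the homogeneity invariant (d). Lemma~\ref{lm:homoextends} as stated only restores single-point homogeneity, whereas closing the induction demands the multi-point version in (d); the remedy is to rerun its proof verbatim, choosing the automorphism $\sigma_1$ so that it additionally fixes the black vertices of $S$ — legitimate precisely because the inductive hypothesis gives $\Gamma_{G'}(\{v\}\cup S\cup\{v_1\})$-homogeneity of $N_{G'}(v)$ — whence the spliced automorphism $\Phi$ fixes $S$ as well. The remaining care is pure bookkeeping: tracking which black vertices survive the zip (all but $v_1,v_2$), checking that the degree-$d$ and simplicity conditions persist, and separating the clean $d=3$ case, where homogeneity is never needed and Theorem~\ref{th:crittos} could even be invoked directly, from the general $d\neq3$ case.
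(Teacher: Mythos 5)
Your proposal is correct and takes essentially the same route as the paper: induction on $p$, with Lemma~\ref{lm:homoextends} maintaining homogeneous neighborhoods at black vertices, Lemma~\ref{lm:semi}~\ref{it:semi} maintaining the coherent bundles, and Lemmas~\ref{lm:lower}, \ref{lm:homoupper} together with Theorem~\ref{th:crithomo} delivering the crossing number and criticality at each zip. Your strengthened invariant (d) (multi-point rather than single-point homogeneity) is a legitimate and in fact necessary sharpening: the paper's one-line proof cites Lemma~\ref{lm:homoextends} without noting that its conclusion ($\Gamma_G(\{v\})$-homogeneity) is formally weaker than the $\Gamma_{G_1}(\{v,v_1\})$-homogeneity its hypothesis demands at the next iteration, and your rerun of its proof with the stronger hypothesis is exactly the right patch.
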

\begin{proof}
By induction on $p$ and using Lemma \ref{lm:homoextends}, we show that all black vertices of $G$  have homogeneous neighborhoods. Iterative  application of Lemmas \ref{lm:lower}, \ref{lm:semi} \ref{it:semi}, \ref{lm:homoupper}, and Theorem \ref{th:crithomo} establish the crossing number of $G$ and its criticality. 
\end{proof}

Jaeger proved the following result:
\begin{theorem}[\cite{J}]
Every $3$-connected cubic graph with crossing number one has chromatic index three.
\end{theorem}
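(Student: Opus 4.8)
The plan is to reduce to the Four Color Theorem, which via Tait's equivalence guarantees that every bridgeless planar cubic graph is properly $3$-edge-colorable, and then to repair the single crossing by a Kempe-chain argument. Fix an optimal drawing of $G$; since $\crn(G)=1$, exactly two edges cross, say $e_1=ab$ and $e_2=cd$, and in an optimal drawing crossing edges share no endpoint, so $a,b,c,d$ are distinct. Deleting $e_1$ and $e_2$ yields a planar subdrawing of $G'=G-e_1-e_2$ in which the four faces incident with the former crossing merge into a single face $F$ whose boundary meets $a,c,b,d$ in this cyclic order. In $G'$ the vertices $a,b,c,d$ have degree two and all others degree three.

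First I would reconnect inside $F$ planarly, setting $G_1=G'+ac+bd$; the chords $ac$ and $bd$ do not cross in $F$, since $a,c$ and $b,d$ are consecutive on $F$. Then $G_1$ is a planar cubic graph, and --- using that $G$ is $3$-connected, hence $3$-edge-connected --- at least one of $G_1$ and the alternative reconnection $G_2=G'+ad+bc$ is bridgeless; I take a bridgeless one, say $G_1$ (the case $G_2$ being symmetric). By the Four Color Theorem $G_1$ has a proper $3$-edge-coloring. Restricting it to $G'$ leaves each of $a,b,c,d$ with exactly one \emph{missing} color, and reading off the two chord colors gives the pattern $(m_a,m_b,m_c,m_d)=(x,y,x,y)$ for some colors $x,y$. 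The goal is a coloring of $G'$ with $m_a=m_b$ and $m_c=m_d$: for then setting $e_1:=m_a$ and $e_2:=m_c$ extends it to a proper $3$-edge-coloring of $G$, so $G$ has chromatic index three.

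If $x=y$ this goal already holds. Otherwise I examine the subgraph $K$ of $G'$ consisting of the edges colored $x$ or $y$. Every degree-three vertex lies on one edge of each color and so has degree two in $K$, while each of $a,b,c,d$ (whose missing color lies in $\{x,y\}$) has exactly one incident $K$-edge and hence degree one; thus $a,b,c,d$ are precisely the endpoints of the two $x,y$-alternating paths of $K$. Here planarity enters decisively: two vertex-disjoint paths drawn in the plane whose endpoints lie on $F$ in the order $a,c,b,d$ cannot realize the \emph{crossing} pairing $\{a,b\},\{c,d\}$, so the two paths join either $\{a,c\}$ and $\{b,d\}$ or $\{a,d\}$ and $\{c,b\}$. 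In each case a single Kempe swap --- interchanging $x$ and $y$ along one path --- transforms the pattern into $(y,y,y,y)$ in the first case and into $m_a=m_b=y$, $m_c=m_d=x$ in the second, exactly as required; the disjoint other path, and hence its two endpoints, are left untouched.

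I expect the crux to be this control of \emph{which} pairing the two Kempe paths realize. The naive move of equalizing the two chord colors by swapping one of them fails, because $ac$ and $bd$ always lie on a common $x,y$-cycle of $G_1$ (the two alternating paths of $G'$ close up through both chords into one cycle), so no $\{x,y\}$-swap in $G_1$ can make the chord colors agree. The resolution is to forget the chords, work inside $G'$, and exploit the planarity of $G'$ --- the cyclic order of $a,c,b,d$ on $F$ --- to forbid the single pairing that would obstruct the swap. The one remaining technical point, that $3$-connectivity of $G$ yields a bridgeless planar reconnection so that the Four Color Theorem applies, I would settle by a short edge-cut count: a bridge of $G_i$ corresponds to a $3$-edge-cut of $G$ through $e_1,e_2$ with a prescribed split of $\{a,b,c,d\}$, and the two reconnections impose incompatible splits, so they cannot both fail.
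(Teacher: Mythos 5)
The paper does not prove this statement at all: it is Jaeger's theorem, imported from \cite{J} purely to contrast with the proposition that follows it (no analogue holds for crossing number $\ge 2$), so there is no internal proof to compare yours against; your argument has to be judged on its own. On its own it is correct in strategy and in essentially all details, and it follows the classical route: delete the two crossing edges to get a plane graph $G'$ whose four degree-two vertices lie on one face in the cyclic order $a,c,b,d$; reconnect planarly; $3$-edge-color the reconnection via the Four Color Theorem and Tait's equivalence; then use planarity of $G'$ to show that the two $\{x,y\}$-Kempe paths ending at $a,b,c,d$ cannot realize the interlaced pairing $\{a,b\},\{c,d\}$, so a single swap yields a coloring of $G'$ with $m_a=m_b$ and $m_c=m_d$, which extends to $G$. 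These steps all check out, including the tacit ones: $G'$ is connected because $G$ is $3$-edge-connected, so its faces are disks and the cyclic-order claim is sound, and Tait's theorem still applies if a chord duplicates an existing edge of $G'$.

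The one step whose written justification is not yet a proof is the bridgelessness of one of $G_1$, $G_2$. Your reduction of a bridge to a cut is right: if $f$ is a bridge of $G_1=G'+ac+bd$ with sides $X,\bar X$, then comparing $d_{G_1}(X)=1$ with $d_G(X)\ge 3$ forces $ab$ and $cd$ to cross the partition, $ac$ and $bd$ not to, and $f$ to be the unique $G'$-edge across; so the split is $\{a,c\}\mid\{b,d\}$, and symmetrically a bridge of $G_2$ forces the split $\{a,d\}\mid\{b,c\}$. But ``the splits are incompatible, so they cannot both fail'' is a non sequitur: nothing a priori prevents a graph from having two edge cuts inducing different splits of $\{a,b,c,d\}$. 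You must derive a contradiction from their coexistence, and the standard uncrossing count does it. If $X$ (with $a,c\in X$, $d_{G'}(X)=1$) and $Y$ (with $a,d\in Y$, $d_{G'}(Y)=1$) both existed, then $X\cap Y$ contains $a$ and none of $b,c,d$, so among $ab,cd$ only $ab$ leaves it, whence $3\le d_G(X\cap Y)=d_{G'}(X\cap Y)+1$, i.e.\ $d_{G'}(X\cap Y)\ge 2$; the same argument applied to $\bar X\cap\bar Y\ni b$ gives $d_{G'}(X\cup Y)=d_{G'}(\bar X\cap\bar Y)\ge 2$; submodularity of the cut function then yields $2=d_{G'}(X)+d_{G'}(Y)\ge d_{G'}(X\cap Y)+d_{G'}(X\cup Y)\ge 4$, a contradiction. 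With this count inserted, your proof is complete. (A small immaterial slip: your parenthetical claim that $ac$ and $bd$ \emph{always} lie on a common $\{x,y\}$-cycle of $G_1$ is false when the Kempe paths pair $\{a,c\}$ with $\{b,d\}$ --- in that case the naive swap would in fact succeed --- but your main argument never relies on that remark.)
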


Graphs of the family $\cR(3,3,p)$ in a zip product with Petersen graph show that a similar result cannot be obtained for any crossing number greater than one. 

\begin{proposition}[\cite{SACN}]
For $k\ge 2$, there exist simple cubic $3$-connected crossing-critical graphs with crossing number $k$ and with no $3$-edge-coloring.
\end{proposition}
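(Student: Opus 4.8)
The plan is to realize every required graph as a zip product of a member of the family $\cR(3,3,p)$ with the Petersen graph $P$. First I would record the properties of $P$ that drive the argument: it is a simple cubic $3$-connected graph with $\crn(P)=2$; being edge-transitive, $P-e$ has crossing number one for every edge $e$, so $P$ is $2$-crossing-critical; each vertex carries two coherent bundles by its symmetry; and, crucially, $P$ admits no proper $3$-edge-coloring. For $k=2$ the Petersen graph itself is the desired example, so it remains to treat $k\ge 3$.

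For $k\ge 3$ I would fix a graph $G\in\cR(3,3,k-2)$. By Proposition \ref{pr:graphsR} together with $\crn(K_{3,3})=1$, $G$ is a simple cubic $3$-connected $(k-2)$-crossing-critical graph with $\crn(G)=k-2$. I then choose any (cubic, simple) vertices $v_1\in V(G)$ and $v_2\in V(P)$ and form $\bar G\in\vzip{G}{v_1}{v_2}{P}$. Cubicity is preserved because a degree-three vertex is deleted and its three incident edges are replaced by three new edges to the former neighbours of the opposite apex; simplicity is preserved because those three new edges join the two otherwise disjoint sides; and $\bar G$ is $3$-connected by Lemma \ref{lm:semi} \ref{it:conn}.

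To pin down the crossing number and criticality I would apply Theorem \ref{th:crithomo} with $d=3$, so that no homogeneity hypothesis is required. Using the two coherent bundles at $v_1$ (inherited from $K_{3,3}$ via Lemma \ref{lm:semi} \ref{it:semi}) and at $v_2$, Lemma \ref{lm:lower} gives $\crn(\bar G)\ge\crn(G)+\crn(P)=k$, while Lemma \ref{lm:homoupper} gives the matching upper bound; hence $\crn(\bar G)=k$. Since $\max\{\crn(G)+2,\,\crn(P)+(k-2)\}=k$, Theorem \ref{th:crithomo} then yields that $\bar G$ is $k$-crossing-critical.

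The step I expect to be the real obstacle is showing that $\bar G$ has no proper $3$-edge-coloring. Here I would invoke the parity lemma for cubic graphs: coding the three colours as the nonzero elements of $\ZZ_2\times\ZZ_2$, the colours on any edge cut sum to zero, so across any $3$-edge cut the three edges must receive three \emph{distinct} colours. The three edges created by the zip form exactly such a cut, separating the $G$-side from the $P$-side. Given a hypothetical proper $3$-edge-coloring of $\bar G$, these zip edges then receive distinct colours $c_1,c_2,c_3$; transplanting them back onto the deleted apex, that is, reinstating $v_2$ and colouring each edge $v_2p_i$ by $c_i$ at the corresponding former neighbour $p_i$ of $v_2$, keeps the colouring proper at every $p_i$ and at $v_2$, producing a proper $3$-edge-coloring of $P$ and contradicting that $P$ is a snark. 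This transplant also makes transparent why the phenomenon persists for all $k$: non-colourability is forced through the zip with $P$ no matter which colourable cubic graph $G$ is attached, completing the proof.
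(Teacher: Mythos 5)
Your proposal is correct and follows exactly the construction the paper indicates for this proposition: zipping a graph from $\cR(3,3,k-2)$ with the Petersen graph, using Lemmas \ref{lm:lower}, \ref{lm:semi}, and \ref{lm:homoupper} together with Theorem \ref{th:crithomo} (with $d=3$, so no homogeneity hypothesis) for the crossing number and criticality, and a parity/Klein-group argument across the $3$-edge cut of zip edges for non-$3$-edge-colorability. The paper defers the details to \cite{SACN}, but your argument fills them in along the same lines, so there is nothing to change.
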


\section{The main construction}
\label{sc:families}

\begin{theorem}
\label{th:main}
Let $r\in (3,6)$ be a rational number and $k$ an integer. There exists a convex continuous function $\dfnc{f}{(3,6)}{\RR^+}$ such that, for $k\ge f(r)$, there exists an infinite family of simple $3$-connected crossing-critical graphs with average degree $r$ and crossing number $k$.
\end{theorem}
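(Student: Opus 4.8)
The plan is to realize every target pair $(r,k)$ as a cubic zip product of crossing-critical building blocks, treating the average degree as a linear ``charge'' that is additive up to a fixed correction per zip. For a graph $G$ set $\mu_r(G)=r\,|V(G)|-2\,|E(G)|$, so that $G$ has average degree exactly $r$ precisely when $\mu_r(G)=0$. If $G$ arises by zipping $q+1$ blocks at simple cubic vertices, then $|V|$ drops by $2$ and $|E|$ by $3$ at each of the $q$ zips, whence
\[
\mu_r(G)=\sum_{i}\mu_r(B_i)+q\,(6-2r),
\]
while Theorem \ref{th:crittos} (iterating Lemma \ref{lm:lower}, Lemma \ref{lm:semi} \ref{it:semi}, and Theorem \ref{th:crithomo}) gives $\crn(G)=\sum_i\crn(B_i)$ and preserves criticality, provided each port is a simple cubic vertex carrying two coherent bundles. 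Thus prescribing average degree $r$ and crossing number $k$ reduces to the pair of linear conditions $\sum_i\mu_r(B_i)+q(6-2r)=0$ and $\sum_i\crn(B_i)=k$ in the block multiplicities. Simplicity and $3$-connectivity of the result then follow from Lemma \ref{lm:semi} \ref{it:conn} together with a generic choice of zip functions.

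First I would fix the toolkit. For the infinitely many members I use a length-parametrised family of fixed crossing number and fixed average degree $r$: for $r\in(3,4)$ this is exactly the staircase family $\cQ(a,b,n)$ of Proposition \ref{pr:apbOdd}, whose members share $\crn={n\choose 2}-1$ and average degree $r=3+a/b$ as the length parameter runs over an infinite set; for the parity-obstructed case $a+b$ even I would replace $r$ by a nearby admissible $r'=3+a'/b'$ with $a'+b'$ odd and restore the exact average degree by one extra fixed zip. For the remaining range, complete-bipartite blocks from $\cR(d,d',p)$ (Proposition \ref{pr:graphsR}), taken with $d=3$ so that the ports are cubic, supply crossing-critical pieces of higher average degree; zipping these onto the staircase family raises the total average degree across all of $(3,6)$. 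All these blocks are simple and $3$-connected and, being $3$-connected, carry two coherent bundles at each cubic vertex, so Lemma \ref{lm:semi} \ref{it:semi} propagates the coherence hypothesis needed throughout the iteration.

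Next I would solve the two linear conditions in nonnegative integers for every large $k$. Having one block of average degree exactly $r$ and fixed crossing number (the infinite family), the remaining freedom is to append fixed blocks whose combined charge $\sum\mu_r(B_i)+q(6-2r)$ vanishes while $\sum\crn(B_i)$ fills the gap $k-{n\choose 2}+1$. Choosing the auxiliary toolkit so that its crossing numbers have greatest common divisor one and its charges straddle zero, a Chicken--McNugget (numerical-semigroup) argument yields solvability for all $k$ beyond a threshold $N_r$, the charge equation being kept integral by clearing the denominator $b$ of $r$. I would then bound $N_r$ by the relevant Frobenius number, which grows polynomially in the denominators occurring in $r$ and hence stays bounded on every compact subinterval of $(3,6)$ while blowing up as $r\to 3^{+}$ or $r\to 6^{-}$, and finally dominate it by a convex continuous $f:(3,6)\to\RR^{+}$.

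The hard part will be exactly this two-dimensional reachability: a single extra cubic zip advances the crossing number but unavoidably perturbs the average degree by the fixed amount $6-2r$, so crossing number and average degree cannot be tuned independently. The crux is therefore to assemble, for each $r$, a finite set of crossing-critical blocks whose crossing-number/charge vectors generate the target lattice point $(k,0)$ over $\ZZ_{\ge0}$ for all large $k$ --- in practice pairing an average-degree-raising block with an average-degree-lowering one to build charge-neutral unit increments of the crossing number --- and to do this uniformly enough in $r$ that the threshold is governed by a single convex continuous function. Carrying along the bookkeeping that the chosen ports stay simple cubic vertices with two coherent bundles after each zip (so that Theorem \ref{th:crittos} keeps applying) and that no parallel edges are created is routine but must be verified at every step.
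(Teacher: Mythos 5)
Your general framework is sound as far as it goes: the charge $\mu_r(G)=r|V(G)|-2|E(G)|$ is indeed additive up to $6-2r$ per cubic zip, crossing numbers add under Lemmas \ref{lm:lower} and \ref{lm:homoupper}, and Theorem \ref{th:crithomo} preserves criticality. But there is a genuine gap: your toolkit provably cannot produce the theorem on $[4,6)$, nor for $r=3+a/b$ with $a+b$ even. Since every block you zip must itself be crossing-critical, each block has crossing number at least one, so by additivity any member with crossing number $k$ is a zip of at most $k$ blocks. An infinite family with fixed $k$ therefore needs blocks that are arbitrarily large yet have bounded crossing number; in your toolkit the only such blocks are staircase graphs, whose average degree (for bounded width, forced by $\crn\le k$) stays below four. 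Hence the average degree of your arbitrarily large members is asymptotically governed by their staircase blocks and cannot equal any $r\ge 4$ for more than finitely many members. This is also exactly why both of your patches fail: a fixed $\cR(3,d',p)$ block changes $\mu_r$ by a constant, while the charge of a growing backbone of average degree $r'\neq r$ is $(r-r')|V(G_t)|$, which grows linearly in $t$; the same growth kills the ``one extra fixed zip'' repair of the parity-obstructed case. Fixed-size correction blocks can never cancel a linearly growing charge, and charge-neutral pairs of fixed blocks, having positive crossing number, can only be used boundedly often.

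The missing ingredient is the paper's second tiled family, $H(w,s)$ of Proposition \ref{pr:graphsH}: for fixed $w$ these graphs are arbitrarily large (parameter $s$) with \emph{constant} crossing number $32w^2+56w+31$ and constant average degree (independent of $s$, tending to $6$ as $w$ grows), i.e.\ they are large negative-charge blocks of bounded crossing number --- precisely what your lattice argument lacks. The paper's construction zips one staircase graph of length $m_t$ with one graph $H(w,s_t)$, where $m_t$ and $s_t$ are \emph{both} linear in a free parameter $t$ with coordinated slopes, so that the two linearly growing charges cancel identically in $t$; the contraction parameter $c$ and fixed numbers $p,q$ of $\cR(3,3,p)$ and $\cR(3,5,q)$ blocks then absorb the constant terms of the charge and tune the crossing number in unit steps (using $\crn(K_{3,3})=1$, $\crn(K_{3,5})=4$), yielding the explicit convex bound $f$. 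If you enlarge your toolkit by $H(w,s)$ and let its length grow proportionally to the staircase length, rather than relying on fixed correction blocks, your bookkeeping goes through and essentially reproduces the paper's proof; one further point you should not wave away is the claim that $3$-connectivity alone supplies two coherent bundles at every cubic port, which the paper instead verifies for its specific families.
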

\begin{proof}
We present a constructive proof for $$f(r)=240 + \tfrac{512}{(6 - r)^2} + \tfrac{224}{6 - r} + \tfrac{25}{16(r - 3)^2} + \tfrac{40}{r - 3}.$$

A sketch of the construction is as follows: The graphs are obtained as a zip product of crossing-critical graphs from the families $\cS$ and $\cR$, and of the graphs $H$, all defined above. The graphs $H$ allow average degree close to six and the graphs from $\cS$ allow average degree close to three. A disjoint union of two such graphs consisting of a proportional number of tiles would have a fixed average degree and crossing number. The zip product compromises the pattern needed for fixed average degree, for which we compensate with the graphs from $\cR$. Their role is also to fine-tune the desired crossing number of the resulting graph. 

More precisely, let $\Gamma(n,m,c,w,s,p,q)$ be the family of graphs, constructed in the following way: first we combine $G_1\in\cS(n,m,c)$ and $G_2=H(w,s)$ in the family   $\Gamma(n,m,c,w,s,0,0)$ $=\bigcup_{G_1,G_2}\bigcup_{v_1,v_2}\GvvG$. Further, we combine $G_1\in \Gamma(n,m,c,w,s,0,0)$ and $G_2\in \cR(3,3,p)$ in the family $\Gamma(n,m,c,w,s,p,0)=\bigcup_{G_1,G_2}\bigcup_{v_1,v_2}\GvvG$. Finally, we combine the graphs $G_1\in \Gamma(n,m,c,w,s,p,0)$ and $G_2\in \cR(3,5,q)$ in the family $\Gamma(n,m,c,w,s,p,q)=\bigcup_{G_1,G_2}\bigcup_{v_1,v_2}\GvvG$. In each case, $v_i\in V(G_i)$ is any vertex of degree three, as all such vertices have two coherent bundles. Propositions \ref{pr:graphsH}, \ref{pr:graphsS}, and \ref{pr:graphsR} imply that the graphs used in construction are crossing-critical graphs whenever the following conditions are satisfied:
\begin{eqnarray}
\label{eq:cond1} n  & \ge & 3, \\
m &=&2m'+1, \\
m' &\ge& 2{n\choose 2},\\
c & \ge & 0,\\
c & \le & 2m(n-3),\\
w & \ge & 0, \\
s & \ge & 4(32 w^2+56w+ 31), \\
p & \ge & 1,\hbox{ and} \\
\label{eq:condN} q & \ge & 1.
\end{eqnarray}

Results in \cite{K29} establish $\crn(K_{3,5})=4$, thus Theorem \ref{th:crithomo} together with Lemmas \ref{lm:lower}, \ref{lm:semi} \ref{it:semi}, and \ref{lm:homoupper} implies that subject to (\ref{eq:cond1})--(\ref{eq:condN}) the graphs in $\Gamma(n,m,c,w,s,p,q)$ are crossing-critical with crossing number 
\begin{equation}
\label{eq:crnGamma}
k={n\choose 2}+32 w^2+56w+p+4q+ 30.
\end{equation}
Their average degree is 
\begin{equation}
\label{eq:avgDeg}
\bar d=6-\frac{4 (m' (6n-11) + 3 n + 3 p + 3 q + 4 s-c-7)}{2 m' (4 n-7 )+ 4 n+ 4 s w + 9 s  + 4 p + 6 q - c-9}.
\end{equation}

Using (\ref{eq:crnGamma}) we express $p$ in terms of $k$ and other parameters. We set $s$ and $m$ to be a linear function of a new parameter $t$, which will determine the size of the resulting graph. We substitute these values into (\ref{eq:avgDeg}). Using $c$ we eliminate all the terms in the denominator that are independent of $t$. Parameter $q$ plays the same role in the numerator. Then $t$ cancels and we set the coefficients of the linear functions to yield the desired average degree. Finally, parameters $n$, $w$, and the constant terms of the linear functions are selected to satisfy the constraints (\ref{eq:cond1})--(\ref{eq:condN}). A more detailed analysis might produce a smaller lower bound $f$, but one constant term was selected to be zero to simplify the computations. 

More precisely, let $r=3+\tfrac{a}{b}$, $0<a<3b$, and $k\ge f(r)$. Perform the following integer divisions:
\begin{eqnarray*}
b  & = & b'a+b_r,\\
b' & = & 4b''+b_r',\\
4b & = & \bar b (3b-a) + \bar b_r, \hbox{ and }\\
k-\tfrac{b''(b''+5)}{2}-8\bar b(4\bar b+7) & = & k'(2b''+5)+k_r.
\end{eqnarray*}
For some integer $t$ set 
\begin{eqnarray*}
n  & = & b''+4,\\
m_t & = & 2t(27b-9a-4\bar b_r)-2k'+3,\\
c & = & 2k'-12b''-6k_r-33,\\
w &=& \bar b,\\
s_t & = & 2t((4b''+9)a-b), \\
p &=& k-\left(\tfrac{b''(b''+23)}{2}+8\bar b(4\bar b+7)+4k_r+56\right), \hbox{ and }\\
q &=& 2b''+k_r+5.
\end{eqnarray*}

The family $\Gamma(a,b,k)=\bigcup_{t=k}^\infty\Gamma(n,m_t,c,w,s_t,p,q)$ is an infinite family of crossing-critical graphs with average degree $r$ and crossing number $k$. Verification of the constraints (\ref{eq:cond1})--(\ref{eq:condN}) for any $r\in(3,6)$ and $k\ge f(r)$ requires some tedious computation that is omitted here; an interested reader can find it in \cite{MAT}. The function $f$ is a sum of functions that are convex on $(3,6)$ and thus itself convex. The graphs of $\Gamma(a,b,k)$ are $3$-connected by Lemma \ref{lm:semi} \ref{it:conn}.
\end{proof}

The convexity of the function $f$ in Theorem \ref{th:main} implies $N_I=\max\{f(r_1),$ $f(r_2)\}$  is a universal lower bound on $k$ for rational numbers within any closed interval $I=[r_1,r_2]\subseteq (3,6)$.

\section{Structure of crossing-critical graphs}
\label{sc:aspects}

It has recently been established that all large $2$-crossing-critical graphs are obtained as cyclizations of long sequences, composed out of copies of a small number of different tiles \cite{BORSa,BORSb}. The construction of crossing-critical graphs using zip product demonstrates that no such classification of tiles can exist for $k\ge 4$: by a generalized zip product of a graph and a tile, as proposed in  \cite{SACN}, one can obtain an infinite sequence of $k$-degenerate tiles, all having the same tile crossing number. These tiles in combination with corresponding perfect planar tiles yield $k$-crossing-critical graphs. 

For $k$ large enough, one can obtain $k$-crossing-critical graphs from an arbitrary (not necessarily critical) graph that has a vertex cover consisting of simple vertices of degree three with two coherent bundles, cf.~Theorem \ref{th:crittos}. Figure \ref{fg:structure} sketches the described structure.

\begin{figure}
\begin{center}
\includegraphics[width=65mm]{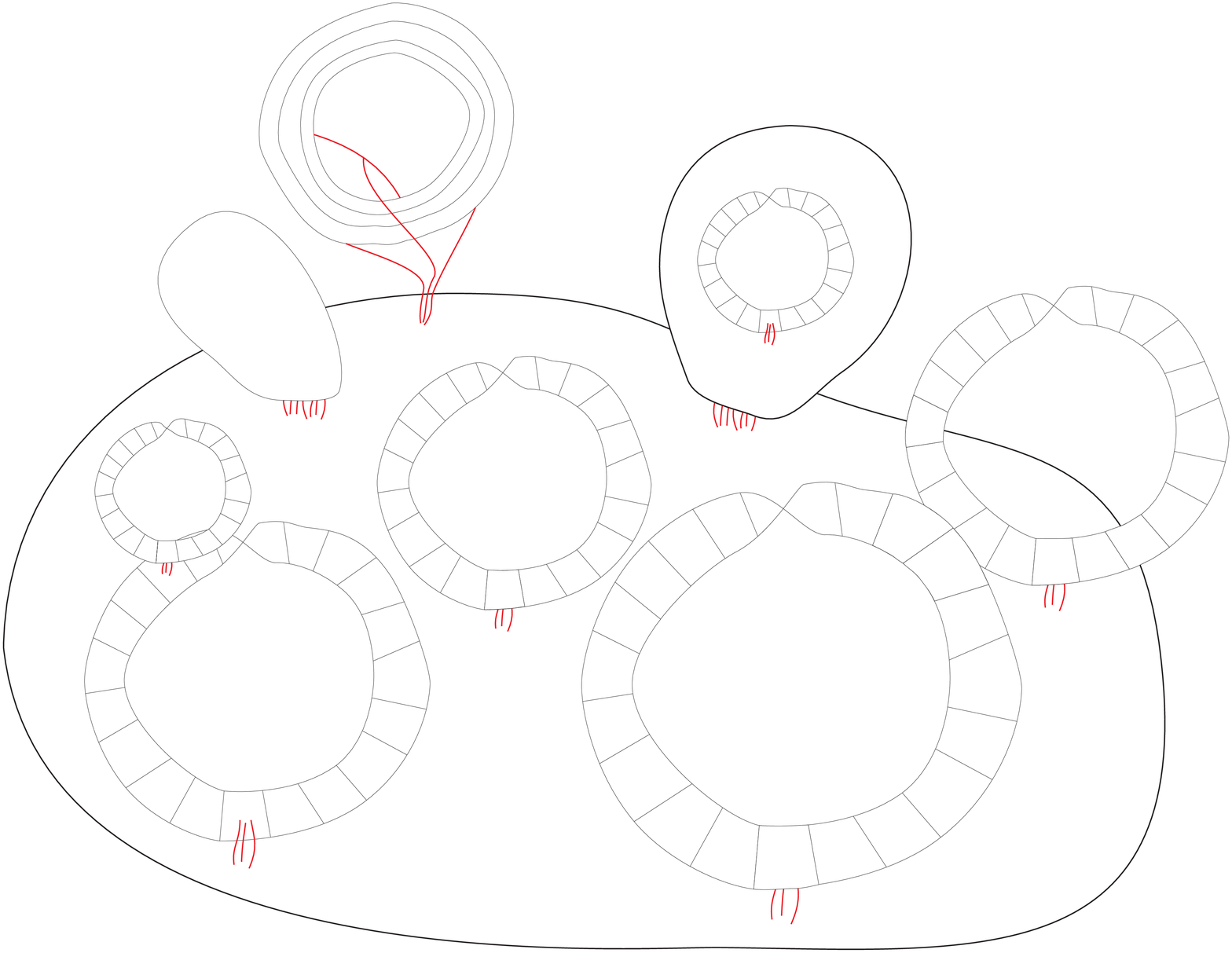}
\end{center}
\caption{Structure of known large $k$-crossing-critical graphs.}
\label{fg:structure}
\end{figure}

The following questions remain open regarding the degrees of vertices in $k$-crossing-critical graphs: 

\begin{question}[\cite{RT164}]
\label{qu:5reg}
Do there exist an integer $k>0$ and an infinite family of (simple) $5$-regular $3$-connected $k$-crossing-critical graphs?
\end{question}

\begin{question}
\label{qu:deg6}
Do there exist an integer $k>0$ and an infinite family of (simple) $3$-connected $k$-crossing-critical graphs of average degree six?
\end{question}

Arguments of \cite{RT164} used to establish that, for $k>0$, there exist only finitely many $k$-crossing-critical graphs with minimum degree six extend to graphs with a bounded number of vertices of degree smaller than six. Thus, we may assume that a family positively answering Question \ref{qu:deg6} would contain graphs with arbitrarily many vertices of degree larger than six. But only vertices of degrees three, four, or six appear arbitrarily often in the graphs of the known infinite families of $k$-crossing-critical graphs. We thus propose the following question, an answer to which would be  a step in answering Questions \ref{qu:5reg} and \ref{qu:deg6}.

\begin{question}
Does there exist an integer $k>0$, such that, for every integer $n$, there exists a $3$-connected $k$-crossing-critical graph $G_n$ with more than $n$ vertices of degree distinct from three, four and six?
\end{question}

We can obtain arbitrarily large crossing-critical graphs with arbitrarily many vertices of degree $d$, for any $d$, by applying the zip product to graphs $K_{3,d}$, $K_{d,d}$, and the graphs from the known infinite families. However, the crossing numbers of these graphs grow with the number of such vertices.

\section{Acknowledgement}

The author would like to express gratitude to Bojan Mohar for pointing out the interesting subject and numerous discussions about it. Also, Matt DeVos merits credit for several valuable terminological suggestions.

\end{document}